\newcommand{\R}{\mathbb{R}}
\newcommand{\Z}{\mathbb{Z}}
\newcommand{\ul}{\underline}
\newtheorem{theorem}{Theorem}
\newtheorem{prop}[theorem]{Proposition}
\newtheorem{claim}[theorem]{Claim}
\newtheorem{cor}[theorem]{Corollary}
\newtheorem{lemma}[theorem]{Lemma}
\theoremstyle{definition}
\newtheorem{definition}{Definition}
\theoremstyle{remark}
\newtheorem{remark}{Remark}
\begin{document}

\markboth{Nicolas Petit}
{Finite-type invariant of order one of long and framed virtual knots}

\title{Finite-type invariants of long and framed virtual knots}

\author{Nicolas Petit}

\maketitle
\begin{center}Oberlin College, 10 N Professor St\\
King 205, Oberlin, OH 44074, USA \\
npetit@oberlin.edu\end{center}

\begin{abstract}
We generalize three invariants, first discovered by A. Henrich, to the long and/or framed virtual knot case. These invariants are all finite-type invariants of order one, and include a universal one. The generalization will require us to extend the notion of a based matrix of a virtual string, first introduced by V. Turaev and later generalized by Henrich, to the long and framed cases.
\end{abstract}

\section{Introduction}
In her 2010 paper \cite{2}, A. Henrich discussed three invariants of virtual knots, that were finite-type invariants (FTIs for short) of order one, building on work by Fiedler \cite{8} and Chernov \cite{9}.
She proved that each of them was stronger than the previous one, and that the last invariant she defined (the so-called ``glueing invariant'') was in a certain sense the universal FTI of order $\leq 1$.
In order to do so she introduced singular virtual strings, expanding on the concepts of virtual strings and the associated based matrices, which were first studied by Turaev in \cite{5}.

This paper aims to extend the results of \cite{2} for framed virtual knots and long virtual knots, both framed and unframed.
To do so, we will need to give an appropriate definition of a framed virtual string and of a long virtual string, and check that the constructions in Henrich's paper still hold in our case.
We will see that all three of Henrich's invariants generalize naturally, and that the glueing invariant is the universal FTI of order $\leq 1$ (in the same sense as in Henrich's paper) for the virtual knot categories we're considering.

In the framed case, generalizing said results requires us to study the theory of based matrices of framed virtual strings, which generalizes the theories developed in \cite{5} and \cite{2}.
It turns out that working in the framed category creates an indeterminacy in the based matrices due to the possible application of a Whitney belt trick.
This indeterminacy, which is also present when looking at singular based matrices, was absent from the unframed case, 
The generalization to the long virtual knot case appears to be more natural, and old examples are easily repurposed for the long category. 
We can then study the behavior of the invariants under connected sum and closure; this leads to interesting results for the polynomial invariant.
Moreover, the polynomial invariant admits a slight generalization in the long case, called the ordered polynomial invariant.
Finally, to prove universality we need to generalize the notion of a based matrix to the case of long virtual strings; in order to do so, we will look at the interpretation of long virtual knots as knots in thickened surfaces with boundary \cite{7} and at Turaev's original idea for based matrices as the homological intersection matrix of the minimal surface of the long knot.

This paper is structured as follows: section \ref{section2} recalls the definitions of virtual knots, flat virtual knots and finite-type invariants; particular care is given to the interpretation of long knots as knots in thickened surfaces with distinguished boundary. 
Section \ref{section3} recalls the invariants from \cite{2} and generalizes them to the framed case. 
Section \ref{section4} recalls and extends the construction of based matrices; in particular, section \ref{section5} proves that the glueing invariant is strictly stronger than the smoothing invariant in all the categories we considered.

\section{Background}
\label{section2}
\subsection{Virtual knots, framed and flat}

Virtual knots were introduced by Kauffman in \cite{3}.
We can think about them in three different ways: we can interpret them as knots in thickened surfaces up to the addition/removal of handles; as classes of Gauss diagrams; or as knot projections with three types of crossings (classical positive, classical negative and virtual) up to Reidemeister moves (both classical and virtual).
This paper will focus on framed virtual knots, which are knots with a non-vanishing normal vector field; diagrammatically this category uses all the Reidemeister moves (classical and virtual) except for classical Reidemeister one, which gets replaced by the move shown in Fig. \ref{framedr1}. Note in particular that virtual Reidemeister one is still allowed. We will sometimes call the usual virtual knot theory ``unframed'' (in contrast with the ``framed'' version).

\begin{figure}[!h]
\centering
\includegraphics[scale=.25]{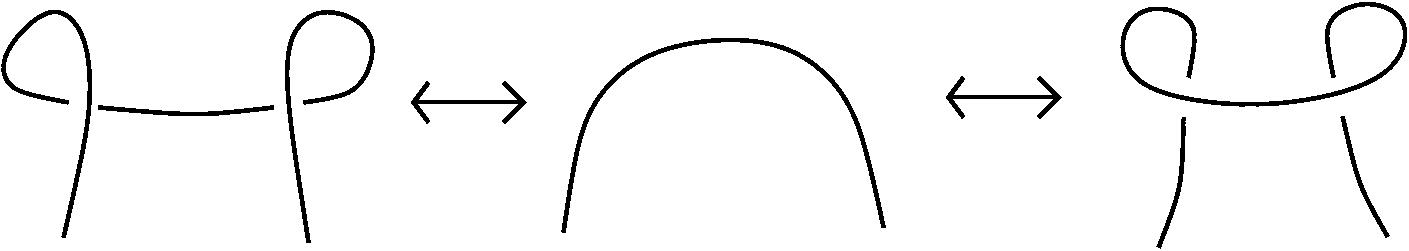}
\caption{The framed Reidemeister one move.}
\label{framedr1}
\end{figure}

Virtual knots can be represented as equivalence classes of Gauss diagrams, which are a way of encoding the information contained in the knot diagram.
Given a virtual knot diagram with $n$ crossings, we get the Gauss diagram associated to it by taking a counter-clockwise oriented circle and $2n$ distinct points on it, divided in pairs.
Each pair of points will correspond to a crossing, and will be given a signed arrow; interpreting the circle as a parametrization of the knot, each point corresponds to either going over or under the crossing point.
The signed arrow will then go from the preimage of the overcrossing to the preimage of the undercrossing, and carry the sign of the crossing.
We can reverse the process to get a knot diagram out of a Gauss diagram.
While the reverse process can yield different knot diagrams, all those diagrams belong to the same isotopy class, so they represent the same virtual knot.
Classical knots naturally sit inside virtual knots as the collection of Gauss diagrams that admit a knot diagram representation that doesn't have virtual crossings.
We will call a knot that is virtual but not classical \emph{properly virtual}.

If instead of looking at isotopy classes we look at homotopy classes of virtual knots we get the theory of \emph{flat virtual knots} (or \emph{virtual strings}, in Turaev's terminology).
Diagrammatically this is equivalent to forgetting whether a classical crossing is positive or negative, as we can deform one into the other; or to adding the crossing change (CC) move, that lets us change a positive crossing into a negative one and vice versa, pictured in Fig. \ref{CCmove}.

\begin{figure}[!h]
\centering
\includegraphics[scale=.15]{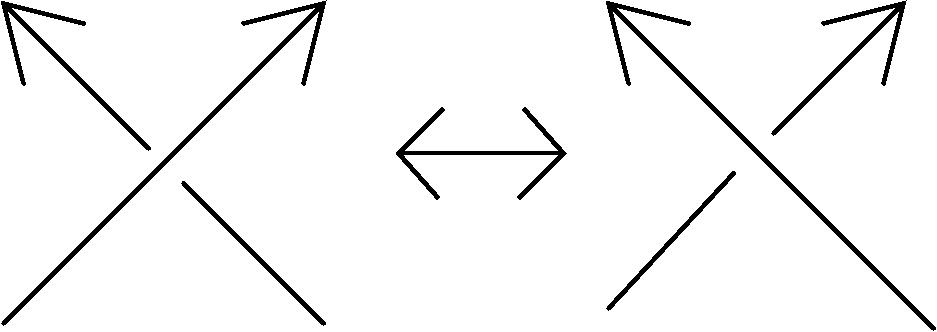}
\caption{The crossing change (CC) move.}
\label{CCmove}
\end{figure}

One big difference between classical and virtual knot theory is the number of homotopy classes: there is only one homotopy class of classical knots, corresponding to the unknot, as we can always transform any knot into the unknot by switching some crossings.
This is no longer true for virtual knots, and there are infinitely many homotopy classes of virtual knots.
Diagrammatically we represent flat virtual knots as knot diagrams with two types of crossings (classical and virtual) with an appropriate set of Reidemeister moves, which we get from the moves of virtual knot theory by forgetting the over/under crossing information.
Since the over/under information is not relevant anymore, our classical crossings will simply look like double points of the projection.
The flat Reidemeister moves then comprise all the Reidemeister moves with only virtual crossings, plus the ones in Fig. \ref{flatreidemeistermoves}.

\begin{figure}[!h]
\centering
\includegraphics[scale=.15]{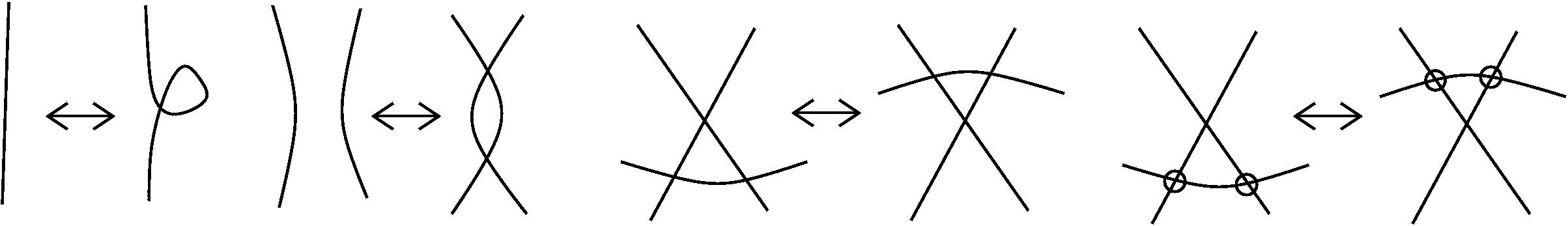}
\caption{Some of the flat Reidemeister moves.}
\label{flatreidemeistermoves}
\end{figure}

Clearly we can define virtual links and flat virtual links in a similar way, as (flat) virtual knots with multiple components, up to the appropriate Reidemeister moves.
A link can also be represented as a Gauss diagram; it will have as many core circles as the number of components of the link.
Arrows whose head and tail belong to the same component will represent self-crossings of the component, while arrows whose head and tail belong to different components will represent crossings involving two distinct components. 

To generalize the three invariants from \cite{2} to framed virtual knots we need to introduce the notion of a flat framed virtual knot.
The notion is very natural: they correspond to framed virtual knots modulo the CC move, and to homotopy classes of framed virtual knots.
Diagrammatically they correspond to flat virtual knots, where the first classical Reidemeister move has been replaced by the flat version of Fig. \ref{framedr1}, shown in Fig. \ref{flatframedR1}.

\begin{figure}[!h]
\centering
\includegraphics[scale=.25]{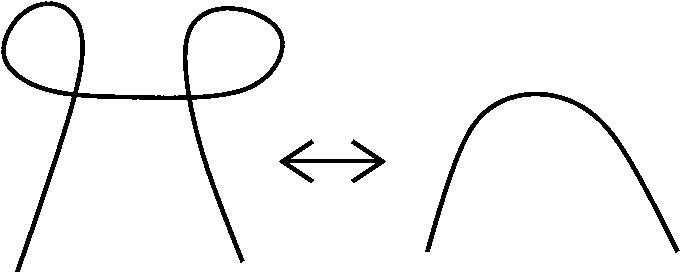}
\caption{The flat, framed Reidemeister one move.}
\label{flatframedR1}
\end{figure}

\subsection{Long virtual knots}
Since this paper will extend the constructions of \cite{2} to the long virtual knot case, let us review some notions and constructions regarding long virtual knots, with particular attention to the interpretation of long knots as knots in thickened surfaces.
More details on the subject can be found in \cite{7}.

\begin{definition}[Long virtual knot diagrams]
A \emph{long virtual knot diagram} is an immersion $K\colon \R\to \R^2$ which coincides with the $x$-axis outside of some closed ball centered at the origin and whose double points are decorated either as a classical crossing (with over/under information) or as a virtual crossing. 
A long virtual knot is an equivalence class of long virtual knot diagrams, where two knots in the equivalence class are related by a finite sequence of (classical and virtual) Reidemeister moves.
If the diagram has no virtual crossings we call it a \emph{long (classical) knot diagram}.
If a long virtual knot has a long classical diagram in its equivalence class we call it a \emph{long classical knot}.
\end{definition}

Let $\Sigma$ be a compact, connected surface with boundary and one or two distinguished boundary components, $g(\Sigma)$ its genus, $c(\Sigma)$ the number of boundary components (including the distinguished ones) and $num(\Sigma)$ the number of distinguished components.
Note that the Euler characteristic of $\Sigma$ is $\chi(\Sigma)=2-2g(\Sigma)-c(\Sigma)$.

\begin{definition}[Long knots in thickened surfaces]
Let $\Sigma$ be as above. If $C$ is a distinguished boundary component of $\Sigma$, let $C\times I \subset \Sigma \times I$ be an annulus.
A long knot in a thickened surface is a smooth embedding $\tau\colon I\to \Sigma\times I$ satisfying:
\begin{enumerate}
\item $im(\tau)\cap \partial(\Sigma\times I)=\{\tau(0), \tau(1)\}$;
\item $\tau(0), \tau(1)\in int(C\times I), \tau(0)\neq \tau(1)$ if $num(\Sigma)=1=\{C\}$;
\item $\tau(0)\in int(C_1\times I), \tau(1)\in int(C_2\times I)$ if $num(\Sigma)=2=\{C_1, C_2\}$.
\end{enumerate}
We denote the long knot by the pair $(\Sigma, \tau)$.
\end{definition}


\begin{definition}
Two longs knots on thickened surfaces $(\Sigma, \tau_1), (\Sigma, \tau_2)$ are said to be equivalent if we can obtain one from the other by a finite sequence of (classical) Reidemeister moves on $\Sigma$.
Two long knots $(\Sigma_1, \tau_1)$ and $(\Sigma_2, \tau_2)$ are said to be \emph{elementary equivalent} if there is a connected oriented surface $\Gamma$ and orientation-preserving embeddings $g_1\colon \Sigma_1\to \Gamma$, $g_2\colon \Sigma_2\to\Gamma$ such that $(\Gamma, g_1(\tau_1))$ and $(\Gamma, g_2(\tau_2))$ are equivalent.
We denote elementary equivalence by $(\Sigma_1,\tau_1)\sim_e(\Sigma_2,\tau_2)$.
Two long knots $(\Sigma_1, \tau_1)$ and $(\Sigma_2, \tau_2)$ are \emph{stably equivalent} if there is a finite sequence 

\begin{equation}(\Sigma_1,\tau_1)=(\Gamma_0, \gamma_0)\sim_e(\Gamma_1,\gamma_1)\sim_e\cdots\sim_e(\Gamma_n,\gamma_n)=(\Sigma_2,\tau_2)\end{equation}
of elementary equivalences.
We denote stable equivalence by $(\Sigma_1,\tau_1)\sim_s(\Sigma_2,\tau_2)$.
\end{definition}

We will now illustrate the correspondence between long virtual knots and stability classes of long knots on thickened surfaces.
To do so, we construct a ``band-pass'' presentation of a long virtual knot diagram as follows.
First, draw the long knot diagram $K$ in $\R^2$, and compactify $\R^2$ into $S^2=\R^2\cup \{\infty\}$.
Let $U$ be a neighborhood of $\{\infty\}$ in $S^2$, small enough so that it only intersects the long knot where the embedding is a line, and consider the long knot as an embedding into $S^2\setminus U$.

\begin{figure}[!h]
\centering
\includegraphics[scale=.12]{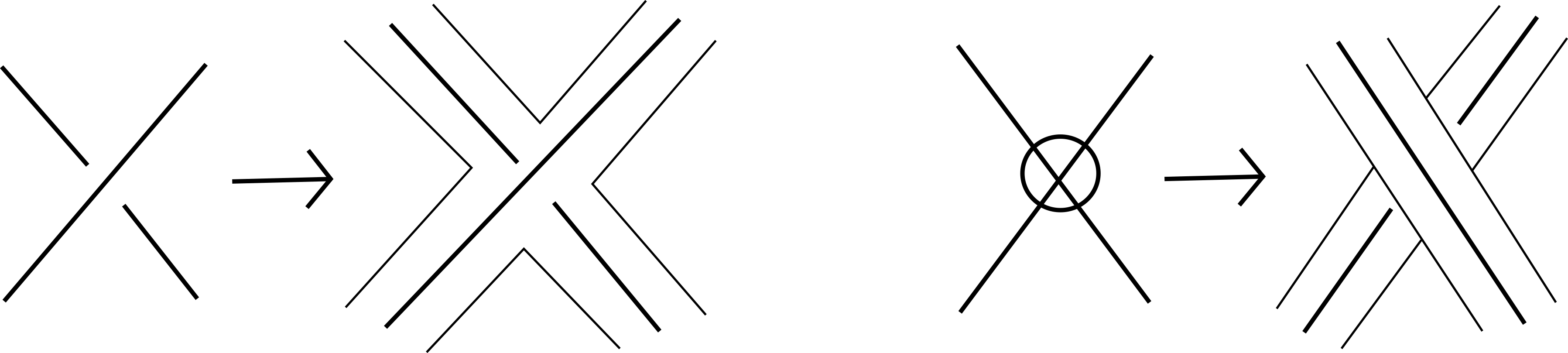}
\caption{How to create the band presentation of a virtual knot.}
\label{bandpresentation}
\end{figure}

Now replace a neighborhood of every classical crossing by a cross of bands, a neighborhood of every virtual crossing by a pair of non-intersecting bands, replace every arc of the knot by a band, the circle $\partial U$ by a circular band, and glue it all together. Finally, draw the knot on the bands, see Fig. \ref{bandpresentation}.
We get a connected, oriented surface $\Sigma_K$ with one distinguished boundary component $\partial U$, and denoting by $\tau_K$ the embedding of our long knot into $\Sigma_K$ we have that $(\Sigma_K,\tau_K)$ is a long knot in a thickened surface that represents the long knot diagram $K$.

The reason why we need to consider stability classes of long knots in thickened surfaces is to make sure that the inverse map is well defined.
More about the inverse map can be found in \cite{7}.
The construction of the band-pass presentation of a long virtual knot will be relevant in section \ref{basedmatriceslongstrings}, when talking about based matrices of flat long virtual knots.

\begin{remark}
For classical knots, there is a 1-1 correspondence between classical knots and long classical knots.
Fixing a point in our closed knot diagram that is not a crossing, we obtain a long knot by considering this fixed point as the point at infinity of $S^2=\R^2\cup\{\infty\}$.
This process is well defined as, no matter where along our knot we pick the fixed point, the long knots we obtain from the process are all isotopic. 

This is unfortunately not true for virtual knots.
The presence of the forbidden moves (see \cite{3}) implies that cutting a closed virtual knot along two different points might yield non-isotopic long virtual knots, see Fig \ref{longvirtnonisotopic}, whose two knots are distinguished by an invariant of \cite{1}.
\end{remark}

\begin{figure}[!h]
\centering
\includegraphics[scale=.15]{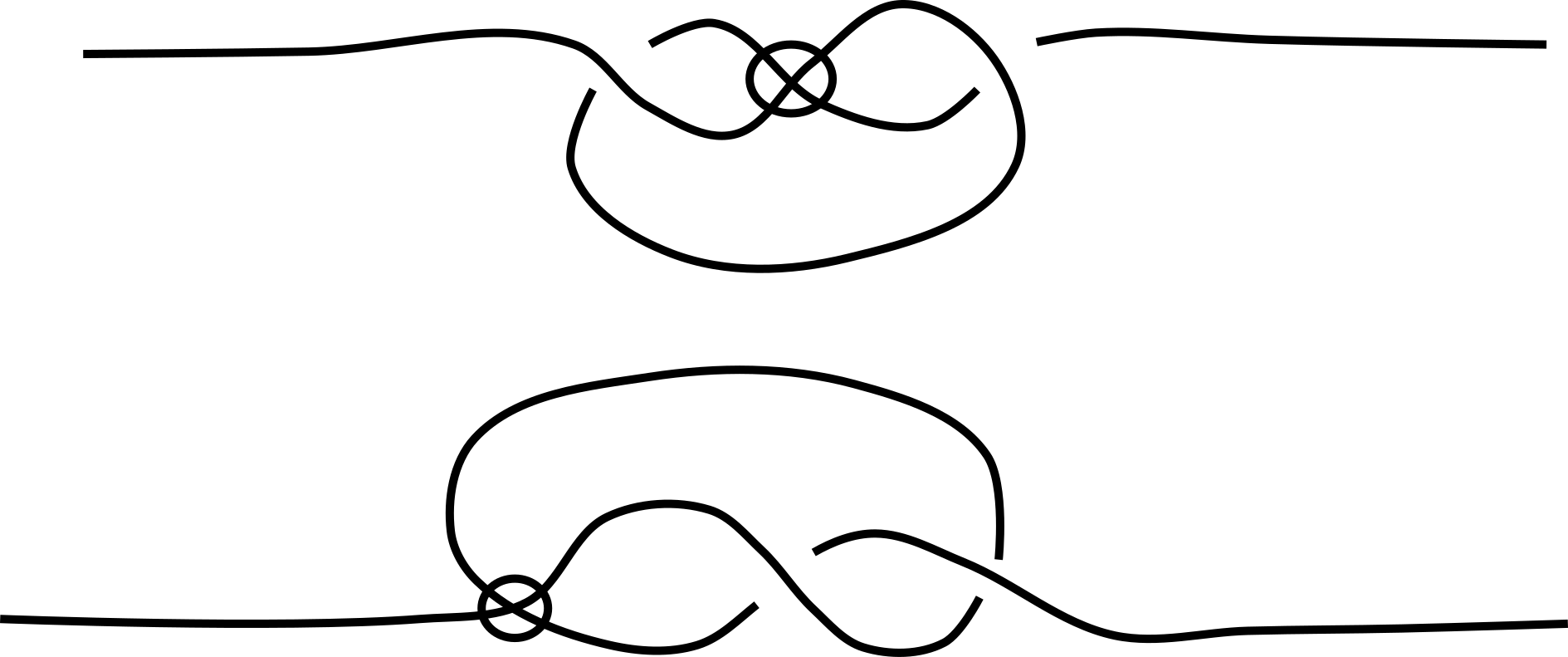}
\caption{Two non-isotopic long virtual knots with isotopic closure.}
\label{longvirtnonisotopic}
\end{figure}

\begin{remark}
\begin{enumerate}\item At the Gauss diagram level, long virtual knots are simply Gauss diagrams with a distinguished point on $S^1$ that represents the point at infinity.
Moreover, in any allowed deformation of the Gauss diagram, no arrowhead or arrow tail can cross the distinguished point.
The rest of the construction is unchanged from the closed case.
\item If we replace Reidemeister move one by its framed version (see Fig. \ref{framedr1}), we get the theory of framed long virtual knots, which are long virtual knots with a non-vanishing normal vector field.
\end{enumerate}
\end{remark}

\begin{remark}
\label{virtualstringdef}
If we consider long virtual knot diagrams modulo Reidemeister moves and the CC move (see Fig. \ref{CCmove}), we get the theory of flat long virtual knots.
Since we will be working a lot with this theory, let us see how the CC move affects the Gauss diagram, both for closed and long virtual knots.
Because the over/under information is now irrelevant, we need to reimagine how to encode the flat virtual knot with a structure that is similar to the Gauss diagram.
We follow \cite{5}, and simply treat every crossing as if it were positive and drop the signs from the arrows.

What this means in practice is that, to get a diagram for a flat virtual knot starting from the Gauss diagram of its virtual knot, we flip all the negative arrows and remove all the signs, see Fig. \ref{virtualstringfromgauss}.
If the Gauss diagram represented a long virtual knot, we keep the distinguished point in its position and give it the same ``point at infinity'' meaning.
We call this diagrammatic representation of a flat knot the \emph{virtual string} associated with the knot.
By abuse of language we will sometimes refer to the flat virtual knot as its virtual string.
\end{remark}

\begin{figure}[!h]
\centering
\includegraphics[scale=.25]{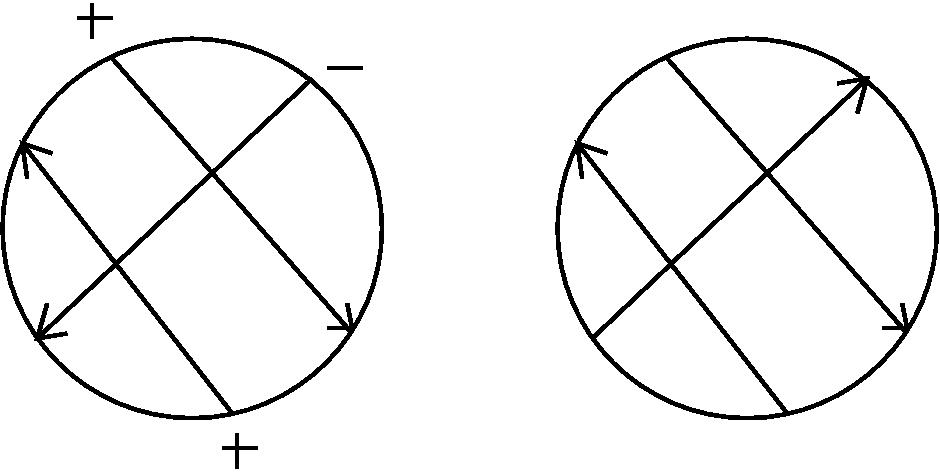}
\caption{The Gauss diagram of the virtual figure eight knot (left) and the virtual string of the flat virtual figure eight knot.}
\label{virtualstringfromgauss}
\end{figure}

\begin{figure}[!h]
\centering
\includegraphics[scale=.175]{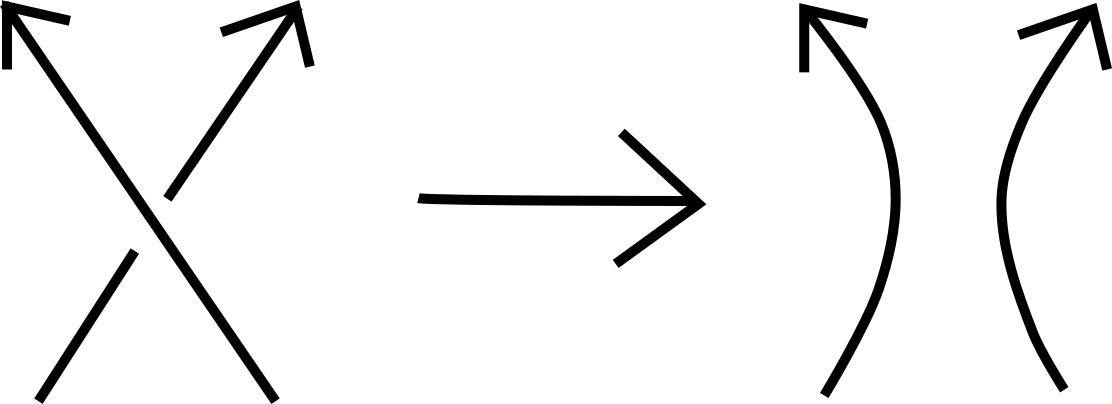}
\caption{Smoothing a crossing according to orientation.}
\label{crossingsmoothing}
\end{figure}

\begin{remark}
We will often in this paper talk about smoothing a crossing in a knot. 
All of our smoothings are done according to orientation, see Fig. \ref{crossingsmoothing}.
Note that smoothing a crossing in a long virtual knot splits the knot into two components, a closed one and an open one.
We will call the result of such a smoothing a \emph{two-component virtual tangle}.
While a more general theory of virtual tangles exists, it is beyond the scope of this paper.
Similarly, smoothing a two-component virtual link/tangle according to orientation at a crossing involving both components yields a virtual knot.\end{remark}

\subsection{Finite-Type Invariants}
\label{finitetypeinvariants}

In the classical case, a \emph{finite-type invariant} (or \emph{Vassiliev invariant}) is the extension of a knot invariant $\nu$ to the class of knots with transverse double points, given by the expression in Fig. \ref{doublepointresolution}.

\begin{figure}[h]
\centering
\includegraphics[scale=.07]{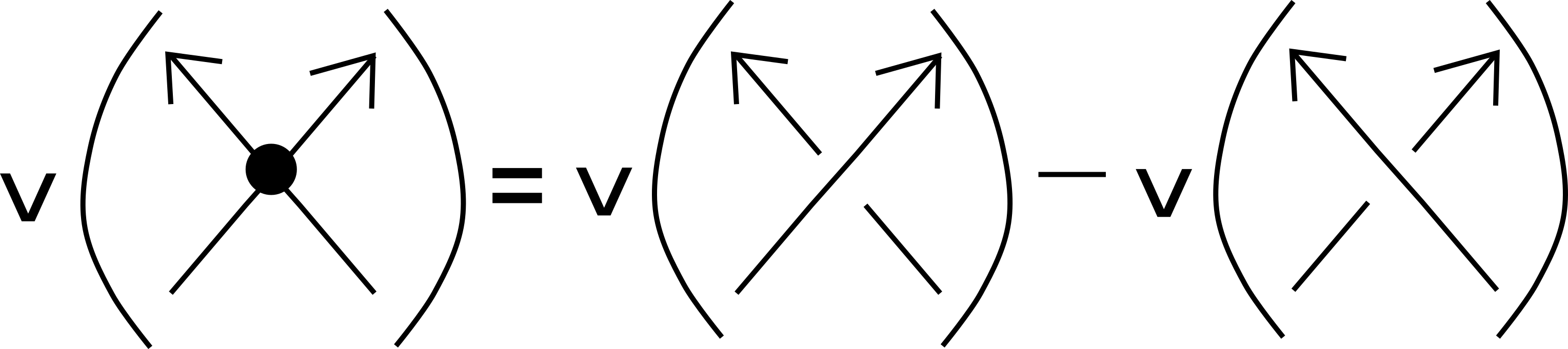}
\caption{How to resolve a double point.}
\label{doublepointresolution}
\end{figure}

This expression is also sometimes referred to as a ``formal derivative'' of the invariant $\nu$.
We say the invariant is \emph{of finite type} and \emph{has order $\leq n$} if it vanishes identically on all knots with more than $n$ double points (i.e. if its $n+1$st derivative vanishes).
The order of the invariant determines a filtration on the space of FTIs, so that we can define the space of invariants of order $=n$ as the quotient of the invariants of order $\leq n$ by those of order $\leq n-1$.
Many famous knot invariants are (or can be reduced to) finite-type invariants, for example the Jones and HOMFLY-PT polynomials (after a power series expansion).
Finite-type invariants for classical knots have been classified through the work of Vassiliev and Kontsevich, who showed that the space of Vassiliev invariants of order $=n$ is isomorphic to the dual space of the chord algebra on $n$ chords (denoted by $\mathcal{C}_n$), modulo the two relations in Figs. \ref{oneT} and \ref{4T} (see \cite{6}).

\begin{figure}[h]
\centering
\includegraphics[scale=.15]{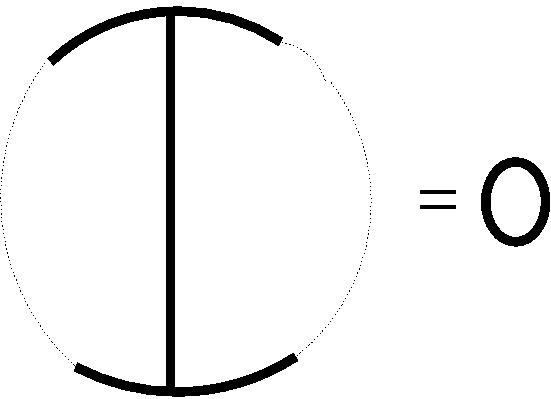}
\caption{The one-term relation.}
\label{oneT}
\end{figure}

\begin{figure}[h]
\centering
\includegraphics[scale=.15]{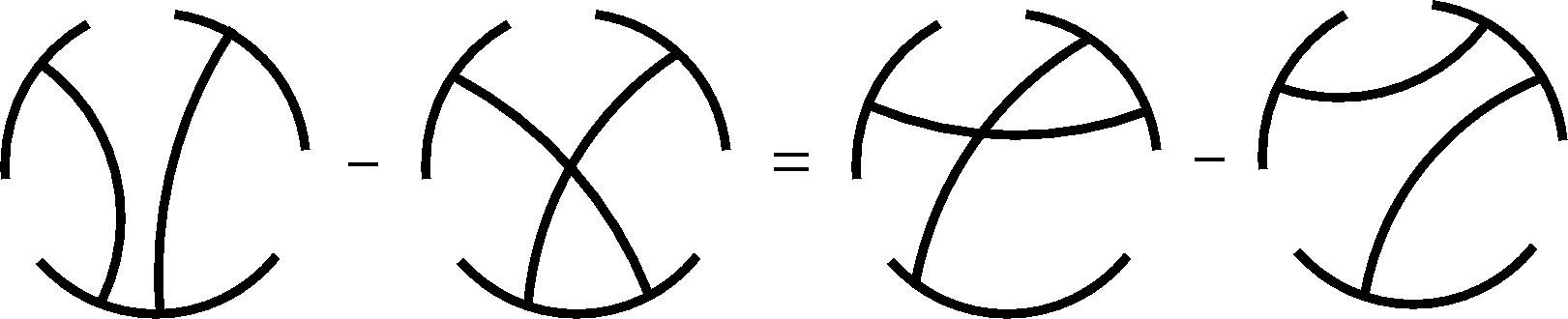}
\caption{The four-term relation, $\overline{4T}$.}
\label{4T}
\end{figure}
\noindent As usual in such pictures we assume that the diagrams coincide outside of the parts we drew.
We call Fig. \ref{oneT} the one-term relation and Fig. \ref{4T} the four-term relation ($\overline{4T}$) for chord diagrams.

For Kauffman \cite{3}, a finite-type invariant of virtual knots is the extension of a virtual knot invariant to the category of singular virtual knots.
These are virtual knots with an extra type of crossing (transverse double points), modulo the Reidemeister moves for virtual knots and some extra moves, which he calls \emph{rigid vertex isotopy} and that are represented in Fig. \ref{rigidvertexisotopy}. 

\begin{figure}[h]
\centering
\includegraphics[scale=.2]{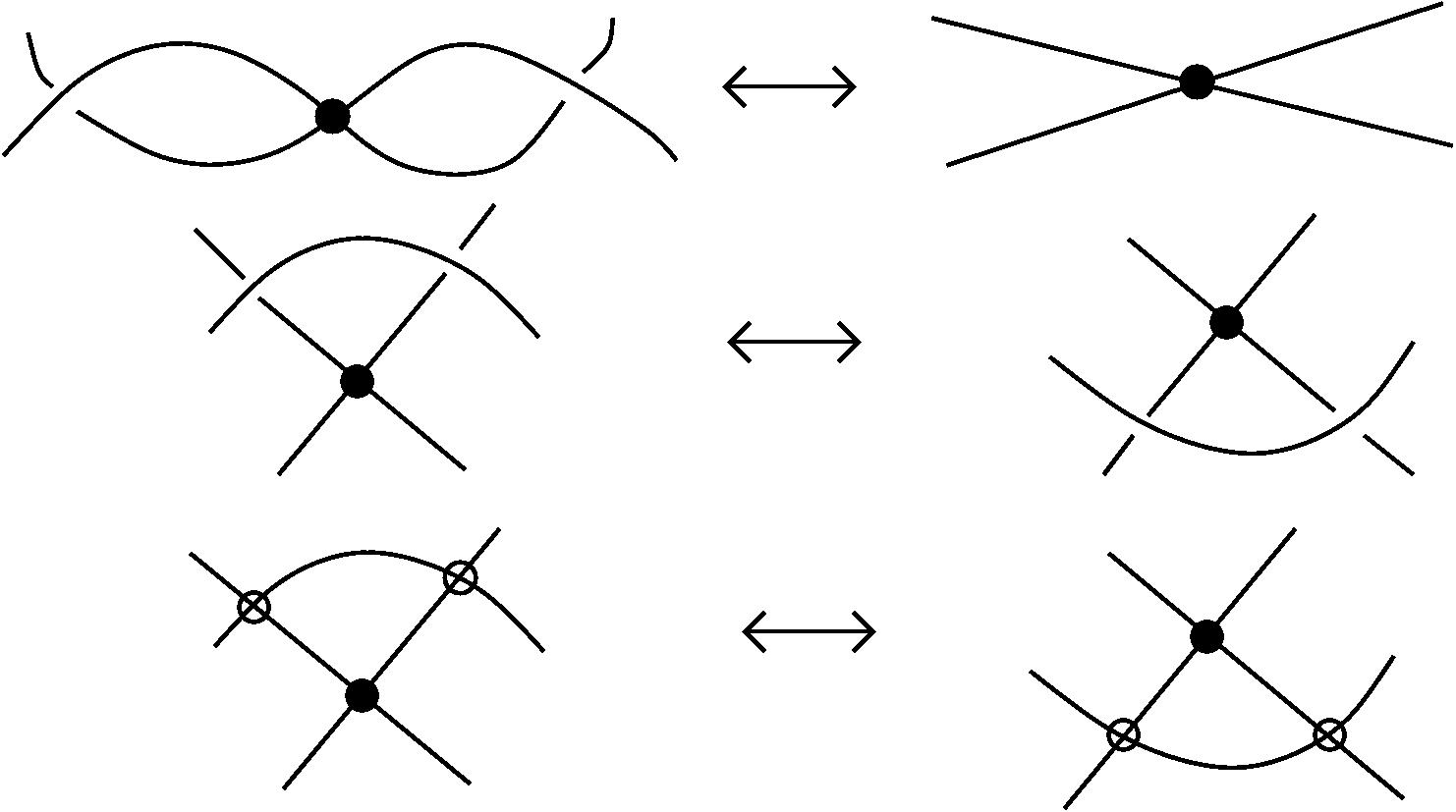}
\caption{The rigid vertex isotopy moves. }
\label{rigidvertexisotopy}
\end{figure}

\noindent The extension uses the same exact formula as in the classical case, see Fig. \ref{doublepointresolution}.
We then say that $\nu$ is a \emph{finite-type invariant of order $\leq n$} if it vanishes on every knot with more than $n$ double points (i.e. the $n+1$st derivative vanishes). 
Examples of finite-type invariants according to this definition are the coefficient of $x^n$ in the Conway polynomial or the Birman coefficients. 

A Vassiliev invariant is said to be \emph{universal} if any other invariant of the same order with values in some abelian group can be recovered from the universal invariant by some natural construction.
For example, a Vassiliev invariant of order one is the universal invariant of order one if we can recover the value of any other Vassiliev invariant $V$ of order $\leq 1$ on any knot $K$ using our universal invariant, the first derivative of $V$ and the value of $V$ on a representative for each of the homotopy classes of virtual knots with a double point.
The universal invariant $U_1$ from \cite{2} that we will later generalize satisfies the formula

\begin{equation}V(K)=V(K_0)+V^{(1)}(\frac{1}{2}(U_1(K)-U_1(K_0))),\end{equation}

\noindent where $K_0$ is in the homotopy class of $K$, $U_1$ is our universal invariant and $V^{(1)}$ is the first derivative of $V$ (where we're using the fact that $V^{(1)}$ is constant on homotopy classes of singular virtual knots with one double point).

\begin{remark}
There is another theory of finite-type invariants of virtual knots, due to Goussarov, Polyak and Viro \cite{1}.
We will not consider that theory in this paper, and focus instead on the theory of Vassiliev invariants as defined by Kauffman.
\end{remark}

\section{The Three Henrich Invariants and their Extensions}
\label{section3}

Throughout the section, we will first consider the extension of each invariant to framed, closed virtual knots, then the extensions to long virtual knots, both framed and unframed.

\subsection{The polynomial invariant}
\label{polyinv}
\begin{definition}\label{def1}
Pick a classical crossing $d$ of the framed virtual knot $K$ and smooth it according to orientation.
The result will be a two component virtual link; consider the flat virtual link associated to it.
Choose an arbitrary ordering $\{1,2\}$ of the components, and give a sign to every crossing in $1\cap 2$ according to the following rule:

\begin{figure}[!h]
\centering
\includegraphics[scale=.2]{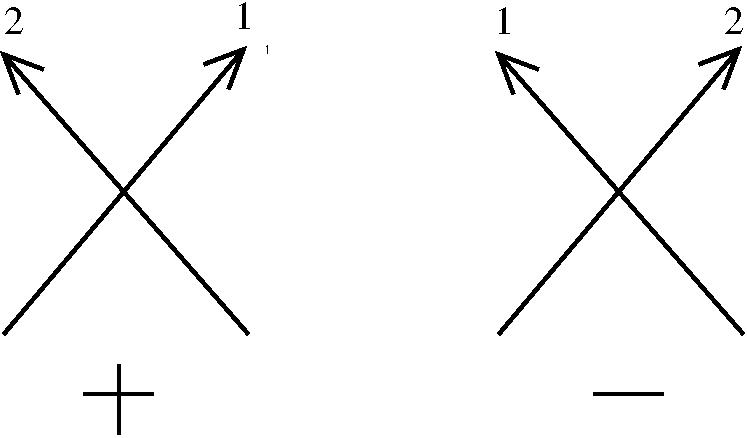}
\caption{The sign of a 2-component flat virtual link.}
\label{flatlinksign}
\end{figure}

The \emph{intersection index} of the crossing $d$ is the quantity given by 

\begin{equation}i(d)=\sum_{x\in 1\cap 2}sgn(x).\end{equation}
\end{definition}

\begin{lemma}
Let $L$ be an ordered two-component flat framed virtual link, with $1\cap 2$ the set of intersections of the two components.
Then $i(L)=\sum_{x\in1\cap 2}sgn(x)$ is invariant under framed flat equivalence.
\end{lemma}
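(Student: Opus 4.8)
The plan is to check that $i(L)=\sum_{x\in 1\cap 2}\mathrm{sgn}(x)$ is unchanged under each move generating framed flat equivalence of ordered $2$-component virtual links: planar isotopy; the purely virtual Reidemeister moves; the mixed move pushing an all-virtual strand past a classical crossing; the flat Reidemeister two and three moves; and the flat framed Reidemeister one move of Fig.~\ref{flatframedR1}. A useful first observation is that none of these moves exchanges the two components, so I can fix the ordering and a compatible orientation of each component once and for all, which makes the sign of every crossing of $1\cap 2$ (read off via Fig.~\ref{flatlinksign}) unambiguous throughout the argument.

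The easy moves I would dispatch first. Planar isotopy and the purely virtual moves neither create nor destroy classical crossings, and they do not alter the pair of arcs a classical crossing joins or its sign, so $i(L)$ is untouched. The mixed move preserves its unique classical crossing together with the two arcs it joins and its sign, so again nothing changes. The flat framed R1 move is supported on a single strand, hence every classical crossing it creates, destroys, or slides is a self-crossing, lying in $1\cap 1$ or $2\cap 2$ rather than in $1\cap 2$; so $i(L)$ is unaffected. Here I am using only that an ``R1-type'' move involves a single strand; the precise shape of Fig.~\ref{flatframedR1} is irrelevant to the argument.

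The substance is in flat R2 and flat R3. For flat R2 I would split into cases: if the two arcs involved lie in the same component, the two classical crossings created or destroyed are self-crossings and contribute nothing to $i(L)$; if they lie in different components, both of those crossings belong to $1\cap 2$, and I would check against Fig.~\ref{flatlinksign} that they carry opposite signs (in each of the two possible orientation configurations of the move), so their net contribution cancels. For flat R3 I would set up the natural bijection between the three classical crossings before the move and the three after it, and verify that this bijection preserves, crossing by crossing, the pair of components joined and — again via Fig.~\ref{flatlinksign} — the sign; this matches the $1\cap 2$-part of the sum term by term. Combining all cases yields the invariance of $i(L)$ under framed flat equivalence.

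The only genuine bookkeeping is the sign check in flat R2 and flat R3: one has to run through the admissible local orientations of the strands and confirm, against the convention of Fig.~\ref{flatlinksign}, that the R2 pair always cancels and that R3 acts as a sign-preserving bijection on crossings. Everything else reduces to the single remark that the generating moves respect the partition of classical crossings into self-crossings and crossings of $1\cap 2$, and never swap the two components.
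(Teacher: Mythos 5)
Your proposal is correct and follows essentially the same route as the paper, which treats the Reidemeister-move check as an easy exercise and highlights exactly your key observation: the flat framed R1 move only creates self-crossings, which never enter $1\cap 2$, while R2 contributes a cancelling pair and R3 a sign-preserving bijection of crossings. Nothing is missing; you have simply written out the bookkeeping the paper leaves to the reader.
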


It's an easy exercise to check that the intersection index is invariant under Reidemeister moves; in particular, flat framed Reidemeister move one only adds self-crossings, which don't influence the value of $i(L)$.
Moreover, changing the order of the components simply changes the sign of the intersection index.

\begin{definition}
Given a virtual knot $K$, its polynomial invariant $p_t(K)\in\Z[t]$ is given by the formula 
\begin{equation}p_t(K)=\sum_d sgn(d)t^{|i(d)|}\end{equation}

\noindent where $d$ is taken over all classical crossings of $K$.
\end{definition}

\begin{prop}\label{polyclosedframed}
The polynomial $p_t(K)$ is an order one Vassiliev invariant of framed virtual knots.
\end{prop}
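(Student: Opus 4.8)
The plan is to prove this in two stages: first that $p_t$ descends to a well-defined invariant of framed virtual knots, and then that its extension to singular virtual knots (via the skein relation of Fig.~\ref{doublepointresolution}) vanishes on diagrams with at least two double points, while its first derivative is not identically zero. The backbone of both stages is the observation that $|i(d)|$ is computed from the \emph{flat} two-component virtual link obtained by smoothing $d$ according to orientation, so it is insensitive to the sign of $d$ (the smoothing ignores it) and to the signs and over/under information of every other crossing (the flat link forgets them); combined with the Lemma above, that $i$ of a two-component flat framed link is a flat framed invariant — and that $|i(d)|$ is in particular independent of the ordering of the two components — this reduces most of stage one to routine local checks.

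For well-definedness I would go through the framed Reidemeister moves one at a time. Any purely virtual move, the mixed move, or any move supported away from a classical crossing $d$ induces, after smoothing $d$, a flat framed link move on the associated two-component link, so $|i(d)|$ is unchanged by the Lemma; since none of these moves creates or destroys a classical crossing, $p_t$ is unchanged. The classical Reidemeister~2 move creates (or removes) a pair of crossings $d_{+}, d_{-}$ with opposite signs; smoothing either of them produces flat links that differ only by a curl on one component, hence with the same intersection index, so $|i(d_{+})| = |i(d_{-})|$ and the contributions $sgn(d_{+})t^{|i(d_{+})|} + sgn(d_{-})t^{|i(d_{-})|}$ cancel, while the indices of the surviving crossings are unchanged by the previous case. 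The classical Reidemeister~3 move creates and destroys no crossings and preserves the three signs; smoothing any one of the three participating crossings turns it into a Reidemeister~2 move or a planar isotopy on the smoothed link, so all three indices — and, smoothing any fourth crossing, all the others — are preserved. Finally the framed Reidemeister~1 move introduces (or removes) at worst a canceling pair of curls: each curl, being a small kink, yields a split unknotted component when smoothed, hence has index $0$, so the two contributions $+1$ and $-1$ cancel; and after any other smoothing a curl remains a self-crossing of a single component, so it never enters the index of another crossing. This shows $p_t$ is a genuine invariant of framed virtual knots.

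For the order bound, extend $p_t$ to singular framed virtual knots by $p_t(K_{\times}) = p_t(K_{+}) - p_t(K_{-})$; invariance of this extension under the rigid vertex isotopy moves follows from invariance of $p_t$ under ordinary framed moves by resolving every double point. Now take a diagram with exactly two double points $e_1, e_2$ and form the four resolutions $K_{\varepsilon_1\varepsilon_2}$, $\varepsilon_i \in \{+,-\}$. By the backbone observation, $i(d)$ is the same in all four diagrams for every $d \notin \{e_1, e_2\}$, and $i(e_j)$ depends on neither $\varepsilon_1$ nor $\varepsilon_2$ — smoothing $e_j$ kills the sign of $e_j$, and passing to the flat link kills the sign of the other double point — so, writing $\alpha_j$ for this common value, $p_t(K_{\varepsilon_1\varepsilon_2}) = \varepsilon_1 t^{|\alpha_1|} + \varepsilon_2 t^{|\alpha_2|} + \sum_{d \neq e_1, e_2} sgn(d) t^{|i(d)|}$. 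The second formal derivative $\sum_{\varepsilon_1, \varepsilon_2} \varepsilon_1 \varepsilon_2\, p_t(K_{\varepsilon_1\varepsilon_2})$ then kills each of the three terms, so $p_t$ vanishes on all two-double-point diagrams, hence (resolving one double point at a time) on all diagrams with $\geq 2$ double points, i.e. $p_t$ has order $\leq 1$. The same computation with a single double point $e$ gives $p_t(K_{\times}) = 2\,t^{|i(e)|} \neq 0$, so $p_t$ is not of order $\leq 0$, and its order is exactly one.

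I expect the only real friction to be in stage one: pinning down the precise local picture of the framed Reidemeister~1 move of Fig.~\ref{framedr1} so that the ``a small kink has index $0$'' and ``the new crossings form a sign-canceling pair'' claims are fully justified, and being careful that the indices of the crossings surviving a classical Reidemeister~2 or~3 move genuinely are preserved — this is exactly the place where the Lemma on two-component flat framed links is indispensable, applied through the principle that smoothing an uninvolved crossing turns a Reidemeister move on $K$ into a Reidemeister move on the smoothed link. Once invariance is in hand, stage two is short and essentially formal.
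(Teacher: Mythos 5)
Your proposal is correct and follows essentially the same route as the paper: invariance is checked move by move (with the framed Reidemeister~1 contributions cancelling because each kink smooths to a split component of index $0$), and vanishing on two double points comes from the fact that oriented smoothing and flattening are blind to crossing signs, so the alternating-sign sum over the four resolutions cancels termwise. The one point where you diverge is the nonvanishing step: the paper cites a specific singular example (Fig.~12 of Henrich's paper with one crossing glued), whereas you compute the first derivative in general as $p_t(K_\times)=2t^{|i(e)|}$, which is automatically a nonzero element of $\Z[t]$; this is legitimate and in fact slightly cleaner, but note it works only because the framed definition omits the ``$-1$'' correction --- in the unframed normalization the derivative is $2(t^{|i(e)|}-1)$, which vanishes when $i(e)=0$, and an explicit example with $i(e)\neq 0$ would again be needed.
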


\begin{proof}
The proof is basically the same as the one described in \cite{2} for the unframed case, and the same strategy will apply to all the invariants we study.
To verify that $p_t$ is an invariant, we must check invariance under the three framed classical Reidemeister moves as well as the mixed Reidemeister move.
Let's check that $p_t$ is invariant under the framed version of Reidemeister one.
Applying framed Reidemeister one introduces two new crossings of opposite sign.
Call those crossings ``$+$'' and ``$-$'' respectively; then we introduce two new terms in $p_t$, of the form 

\begin{equation}+t^{|i(+)|} -t^{|i(-)|}\label{eq1a}\end{equation}

\noindent However, it's easy to see that, after smoothing either of the crossings, the two components of the flat virtual link we get do not intersect.

\begin{figure}[!h]
\centering
\includegraphics[scale=.25]{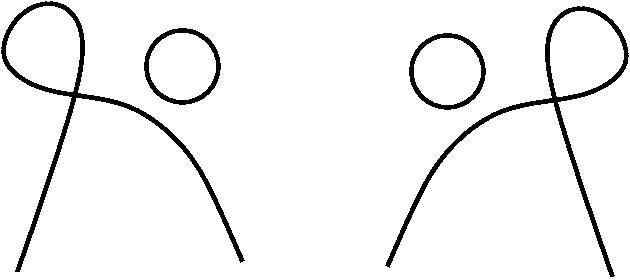}
\caption{The result of smoothing along either crossing created by a framed Reidemeister one move and taking the flat class.}
\label{figure1}
\end{figure}

\noindent Thus $i(+)=i(-)=0$ and the terms in Eq. (\ref{eq1a}) cancel each other, showing that $p_t$ is indeed invariant under framed Reidemeister one.
When introducing two new crossings with a classical Reidemeister move two, those crossings have opposite signs, and the results of smoothing along either crossing are clearly homotopic, so the overall contribution of a Reidemeister move two is zero.
For classical Reidemeister move three and the mixed Reidemeister move, it is easy to see that there is a one-to-one correspondence between the signs of the crossings and their intersection indices before and after the moves.
This shows that $p_t$ is an invariant of framed virtual knots.

To show that $p_t$ is an invariant of order one, we need to show that it vanishes on any knot with two double points, and has a nonzero value on a knot with one double point.
The first assertion is a consequence of the alternating sign: suppose you have a knot $K_{dd'}$ with two double points $\{d,d'\}$, and denote by $K_{ab}$, where $a,b\in\{+,-\}$, the four possible resolutions of the $d,d'$.
By definition, 
\begin{equation}\label{eqpt}
p_t(K_{dd'})=p_t(K_{++})-p_t(K_{+-})-p_t(K_{-+})+p_t(K_{--}).
\end{equation}
However, the intersection index is defined in terms of a smoothing, and oriented smoothings don't see the sign of the crossing, so $|i(d)|$ is the same whether $d$ is positively or negatively resolved.
By virtue of the alternating signs in Eq. (\ref{eqpt}) all the common terms cancel out, and the terms corresponding to $d$ (and similarly $d'$) each appear twice with opposite signs, so they cancel out as well.
The end result is that $p_t(K_{dd'})=0$.
Finally, $p_t$ is still nonzero on the example of \cite{2}, Fig. 12, where we glue one of the upper crossings.
\end{proof}

\begin{cor}
Let $p_t(mod2)$ be the polynomial invariant (for framed virtual knots) with coefficients in $\Z_2$ instead of $\Z$.
Then $p_t(mod2)$ is a homotopy invariant of framed virtual knots.
\end{cor}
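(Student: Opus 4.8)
The plan is to reduce everything to the crossing change move. Recall from the discussion around Remark~\ref{virtualstringdef} that two framed virtual knots are homotopic exactly when they are related by a finite sequence of framed Reidemeister moves together with CC moves (Fig.~\ref{CCmove}); equivalently, a homotopy invariant of framed virtual knots is precisely an invariant of flat framed virtual knots. Since Proposition~\ref{polyclosedframed} already establishes that $p_t$, and hence its reduction $p_t(mod2)$, is invariant under all framed Reidemeister moves, it suffices to check that $p_t(mod2)$ is unchanged by a single CC move.

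So fix a framed virtual knot $K$, let $d$ be the classical crossing at which the CC move is performed, and let $K'$ be the result. First I would note that $K$ and $K'$ have the same underlying projection, the same crossing set, and the same orientation: the CC move changes only the over/under decoration at $d$. Consequently, for every classical crossing $e\neq d$ the two-component flat link obtained by smoothing $e$ according to orientation (Fig.~\ref{crossingsmoothing}) is literally the same for $K$ and for $K'$ — the oriented smoothing already forgets over/under, and whatever over/under data remained at $d$ is erased when we pass to the associated flat link — so both $sgn(e)$ and $i(e)$ are unaffected; and for $d$ itself the oriented smoothing never recorded the sign of $d$, so $i(d)$ is unaffected while $sgn(d)$ simply flips. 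Plugging into $p_t=\sum_d sgn(d)\,t^{|i(d)|}$ then gives $p_t(K')-p_t(K)=-2\,sgn(d)\,t^{|i(d)|}$, which is $0$ in $\Z_2[t]$, so $p_t(mod2)(K')=p_t(mod2)(K)$.

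This shows $p_t(mod2)$ is CC-invariant, and combined with the Reidemeister invariance of Proposition~\ref{polyclosedframed} it descends to a well-defined invariant of flat framed virtual knots, i.e.\ a homotopy invariant of framed virtual knots, as claimed. I do not expect a serious obstacle: the whole content is the mod-$2$ vanishing of the term $2\,sgn(d)\,t^{|i(d)|}$, which is just the alternating-sign mechanism already exploited in the proof of Proposition~\ref{polyclosedframed}. The only place a hurried argument could slip is the claim that the intersection indices $i(e)$ of the crossings $e\neq d$ are genuinely untouched by the CC move — this relies on the observation that passing to the flat class of the smoothed link erases the remaining over/under information at $d$, which is precisely why the statement fails over $\Z$ but holds mod $2$.
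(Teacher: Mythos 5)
Your argument is correct and matches the paper's own proof: invariance under framed Reidemeister moves is inherited from Proposition \ref{polyclosedframed}, and a CC move at $d$ only flips $sgn(d)$ while the smoothings (and hence all intersection indices) are untouched, so the change $-2\,sgn(d)\,t^{|i(d)|}$ vanishes mod $2$. Your extra care in noting why $i(e)$ for $e\neq d$ and $i(d)$ are unaffected is just a more explicit spelling-out of the same mechanism the paper uses.
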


\begin{proof}
$p_t$ is already invariant under classical and virtual Reidemeister moves (including framed Reidemeister one), so $p_t(mod2)$ also is.
Moreover, applying the CC move changes the coefficient $sgn(d)$ of the term associated to that crossing from $\pm1$ to $\mp1$.
Thus $p_t(mod2)$ is invariant under the CC move as well.
\end{proof}

$p_t(mod 2)$ doesn't play a huge part in what follows, but it is used in the proof that the smoothing invariant is strictly stronger than the polynomial invariant, so we needed to check that it was still an invariant in the framed case.

\subsection{The two polynomial invariants for long virtual knots}

\begin{definition}\label{def1long}
Pick a classical crossing $d$ of the knot $K$ and smooth it according to orientation.
The result will be a two component virtual tangle, with one closed component; consider the flat virtual tangle associated to it.
Choose an arbitrary ordering $\{1,2\}$ of the components, and give a sign to every crossing in $1\cap 2$ according to the rule in Fig. \ref{flatlinksign}.

\noindent The \emph{intersection index} of the crossing $d$ is given by 

\begin{equation}i(d)=\sum_{x\in 1\cap 2}sgn(x).\label{intersectionindex}\end{equation}
\end{definition}

\begin{lemma}
Let $L$ be an ordered two-component flat framed virtual tangle, with $1\cap 2$ the set of intersections of the two components.
Then $i(L)=\sum_{x\in1\cap 2}sgn(x)$ is invariant under flat equivalence, both in the framed and unframed case.
\end{lemma}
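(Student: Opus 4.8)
The plan is to follow, essentially verbatim, the strategy used for the closed two-component case: I would check directly that $i(L)$ is unchanged by each generator of flat equivalence, running the two regimes -- the unframed one (first Reidemeister move as the usual flat $R1$) and the framed one (first Reidemeister move replaced by the move of Fig.~\ref{flatframedR1}) -- in parallel. The only move that differs between the two settings is the first one, and for that move the relevant observation is the same in both cases, so almost all of the verification is common to the framed and unframed situations.

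First I would dispatch the moves that cannot change $i(L)$ for formal reasons: the purely virtual Reidemeister moves and the mixed move neither create nor destroy classical double points, hence they leave the set $1\cap 2$ and all of its signs untouched. Next, both versions of the first Reidemeister move (the unframed one, which adds a single kink, and the framed one of Fig.~\ref{flatframedR1}, which adds two) introduce only \emph{self}-crossings of whichever strand is involved; a self-crossing lies in $1\cap 1$ or in $2\cap 2$, never in $1\cap 2$, so it contributes nothing to $i(L)$. For flat $R2$, the two double points created are either both self-crossings of a single component -- in which case $i(L)$ is again unaffected -- or both lie in $1\cap 2$; in the latter case the local picture together with the sign convention of Fig.~\ref{flatlinksign} shows that the two new crossings receive opposite signs, so their contributions cancel. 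Finally, flat $R3$ induces a bijection between the classical double points before and after the move which preserves, for each crossing, both the (unordered) pair of components meeting there and the assigned sign; hence $\sum_{x\in 1\cap 2}sgn(x)$ is preserved term by term.

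The one point that needs a little care in the long (tangle) setting is that component $2$ is now an arc with its two endpoints fixed on the distinguished boundary rather than a circle, so one must be sure that no new moves appear and that "intersections of the two components" still makes sense; but the allowed moves are exactly the flat Reidemeister moves performed away from the fixed endpoints, so the case analysis above carries over unchanged. As a conceptual sanity check one may also observe that $i(L)$ is nothing but the algebraic intersection number of the $1$-cycle carried by the closed component with the relative $1$-cycle carried by the open component (rel its fixed endpoints) in a supporting surface; this pairing descends to homology and is patently a homotopy invariant, which re-proves the lemma. Accordingly, the main obstacle is not conceptual: it is simply the sign bookkeeping in flat $R2$ and $R3$, i.e.\ confirming from Fig.~\ref{flatlinksign} that the two $R2$ crossings always come with opposite signs and that $R3$ is sign-neutral, both of which are routine local checks.
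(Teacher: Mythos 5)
Your proposal is correct and follows the same route as the paper, which simply notes that it suffices to check invariance of the intersection index under the flat (framed) Reidemeister moves, with the key observation that the first move (in either regime) only creates self-crossings and so never touches $1\cap 2$. Your additional bookkeeping for the second and third moves and the remark about the fixed endpoints of the open component just spell out the details the paper leaves as an easy exercise.
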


The proof of this lemma is an easy exercise.
As before, it is sufficient to check that the intersection index is invariant under Reidemeister moves.
Note that changing the order of the components simply changes the sign of the intersection index.

\begin{definition}
Given a long virtual knot $K$, its polynomial invariant $p_t(K)\in\Z[t]$ is given by the formula 

\begin{equation}p_t(K)=\begin{cases}\sum_d sgn(d)(t^{|i(d)|}-1)&\text{ in the unframed case }\\
\sum_d sgn(d)t^{|i(d)|} & \text{ in the framed case }\end{cases}\end{equation}
where $d$ is taken over all classical crossings of $K$ and $i(d)$ is the intersection index of Eq. (\ref{intersectionindex}).
\end{definition}

\begin{prop}\label{polylong}
The polynomial $p_t(K)$ is a Vassiliev invariant of order one of long virtual knots, both framed and unframed.
\end{prop}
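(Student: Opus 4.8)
The plan is to follow the three-step template of the proof of Proposition~\ref{polyclosedframed}. First, verify that $p_t$ is unchanged by every move allowed for long virtual knots — the classical R2 and R3 moves, the mixed Reidemeister move, all virtual moves, and either the ordinary R1 (unframed case) or the framed R1 (framed case) — so that $p_t$ descends to an invariant. Second, show that the ``second formal derivative'' of $p_t$ vanishes, so that $p_t$ has order $\le 1$, recording along the way that its first derivative is constant on homotopy classes. Third, exhibit a long singular virtual knot with a single double point on which the derivative is nonzero, so that the order is exactly one. The structural feature peculiar to the long setting that makes this run smoothly is the one recalled above: smoothing a classical crossing of a long virtual knot produces a two-component virtual tangle consisting of one closed and one long component, and moreover every allowed deformation takes place in a fixed ball away from the point at infinity, so no strand is ever pushed past the distinguished point. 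Hence the closed/long splitting, and with it the quantity $|i(d)|$ attached to each classical crossing $d$, is preserved under the moves, and the lemma on the flat-invariance of $i(L)$ for two-component flat virtual tangles (framed and unframed) applies verbatim.

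For the invariance step, the moves R2, R3, the mixed move and the virtual moves are handled exactly as in Proposition~\ref{polyclosedframed}: an R2 move introduces two crossings of opposite sign whose smoothings give homotopic, hence equal-index, flat tangles; an R3 move and the mixed move induce a bijection of the classical crossings before and after that preserves both the sign and the intersection index; virtual moves do not touch classical crossings. The framed R1 move of Fig.~\ref{framedr1} introduces two crossings of opposite sign, and smoothing either of them peels off a small circle bounding an embedded disk disjoint from the remainder of the diagram, so both new crossings have intersection index $0$ and the two new monomials cancel. The one genuinely new case is the ordinary R1 move in the unframed category: it introduces a single new crossing $d$, and smoothing $d$ again peels off a disk-bounding circle disjoint from the long component, so $i(d)=0$ and the new contribution to $p_t$ is $sgn(d)\bigl(t^{0}-1\bigr)=0$. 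This is exactly the role of the normalizing $-1$ in the unframed formula, and it is the only place where the long unframed argument departs from the closed framed one.

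For the order-one step, let $K_{dd'}$ be a long virtual knot with two double points $d,d'$ and let $K_{ab}$, $a,b\in\{+,-\}$, be its four resolutions, so that
\begin{equation}
p_t(K_{dd'})=p_t(K_{++})-p_t(K_{+-})-p_t(K_{-+})+p_t(K_{--}).
\end{equation}
Oriented smoothing and passage to the flat tangle both forget signs, so for any classical crossing $c$ the index $|i(c)|$ does not depend on how $d$ and $d'$ are resolved; moreover $sgn(c)$ is constant over the four $K_{ab}$ when $c\neq d,d'$, and flips precisely with the resolution of $d$ (resp. $d'$) when $c=d$ (resp. $c=d'$). Collecting the monomials crossing by crossing, each term attached to a crossing $c\neq d,d'$ is multiplied by $1-1-1+1=0$, and the $d$- and $d'$-terms are each multiplied by a signed combination that also sums to $0$; hence $p_t(K_{dd'})=0$. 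Running the same bookkeeping for a single double point shows that $p_t(K_{+})-p_t(K_{-})=2\,sgn(d)\bigl(t^{|i(d)|}-1\bigr)$ in the unframed case and $2\,sgn(d)\,t^{|i(d)|}$ in the framed case, which depends only on the homotopy class of the singular knot, so the first derivative is well defined.

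For the order-exactly-one step, in the framed category the derivative $2\,sgn(d)\,t^{|i(d)|}$ is a nonzero polynomial for every classical crossing, so any long framed virtual knot carrying a classical crossing witnesses that $p_t$ is not of order $0$; in the unframed category one needs a double point with $|i(d)|\ge 1$, which is supplied by a small long singular virtual knot, for instance a long version of the example in \cite{2} obtained by gluing one of the upper crossings, where the derivative $\pm2\bigl(t^{|i(d)|}-1\bigr)$ is nonzero. The only step that is not pure bookkeeping inherited from the closed framed case is the observation that the normalizing $-1$ absorbs the single crossing created by an unframed R1 move; the mild point needing a word of care is confirming that the closed/long splitting of a smoothed crossing is respected by each move and that no strand ever crosses the point at infinity, which is immediate from the definitions recalled in Section~\ref{section2}.
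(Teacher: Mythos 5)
Your proof is correct and takes essentially the same route as the paper's, which simply declares the argument analogous to Proposition \ref{polyclosedframed} and then checks nonvanishing on the explicit singular long knot of Fig. \ref{ptnonzero} (computing $2(t^2-1)$ unframed and $2t^2-2$ framed). The only differences are cosmetic: you spell out the unframed R1 check and the role of the $-1$ normalization, which the paper leaves implicit, and in the framed case you shortcut the example by noting that the first derivative $2t^{|i(d)|}$ is automatically nonzero on any singular knot with one double point, while for the unframed case your appeal to a long version of Henrich's glued example is exactly the knot the paper exhibits.
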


\begin{proof}The proof is analogous to the one for Proposition \ref{polyclosedframed}.
In particular, the value of $p_t$ on the knot $K$ in Fig. \ref{ptnonzero} is nonzero, looking at it as both a framed and unframed long virtual knot.
In the unframed case, resolving the double point one way gives us the long unknot (whose $p_t=0$) and resolving the other way gives a knot whose polynomial invariant is $2(t^2-1)$, so overall $p_t(K)=2(t^2-1)$.
In the framed case resolving the double point we get the long unknot with one kink (whose $p_t=1$) and a knot whose polynomial invariant is $2t^2-1$, so overall we still have $p_t(K)=2t^2-2$.\end{proof}

\begin{figure}
\centering
\includegraphics[scale=.15]{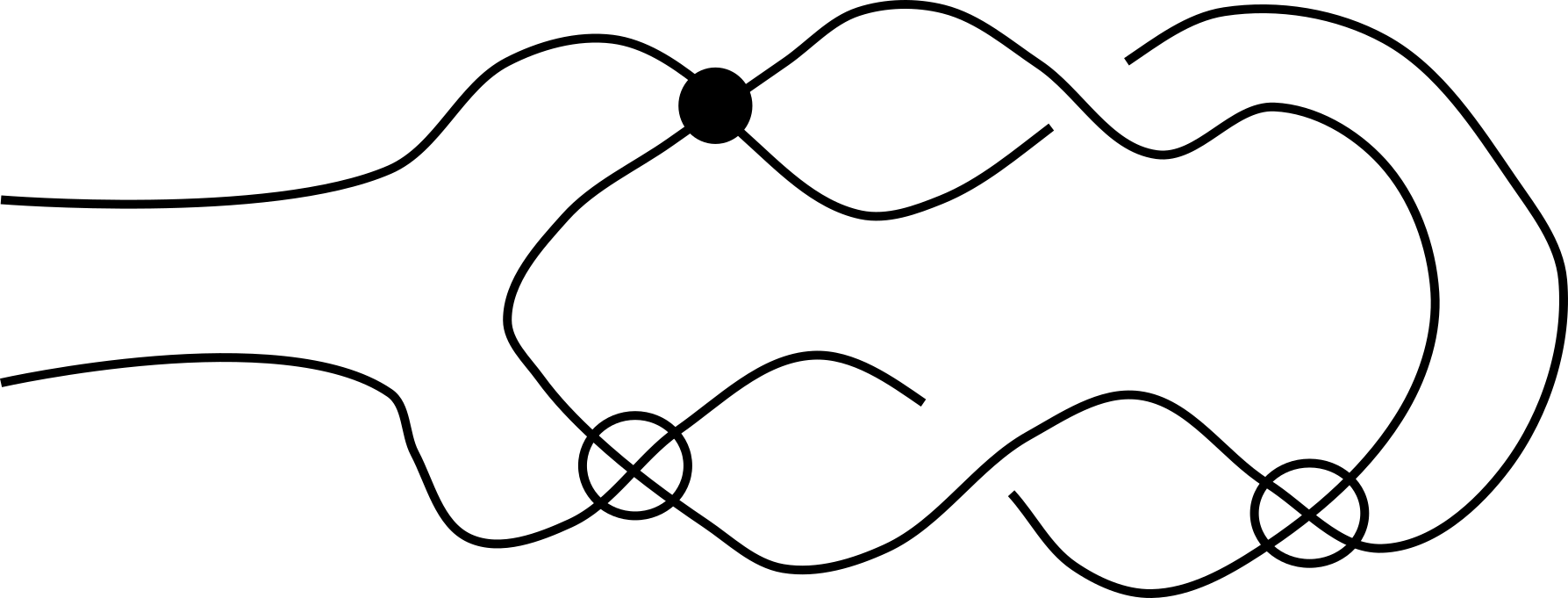}
\caption{A singular long knot on which $p_t\neq0$.}
\label{ptnonzero}
\end{figure}

\begin{figure}
\centering
\includegraphics[scale=.1]{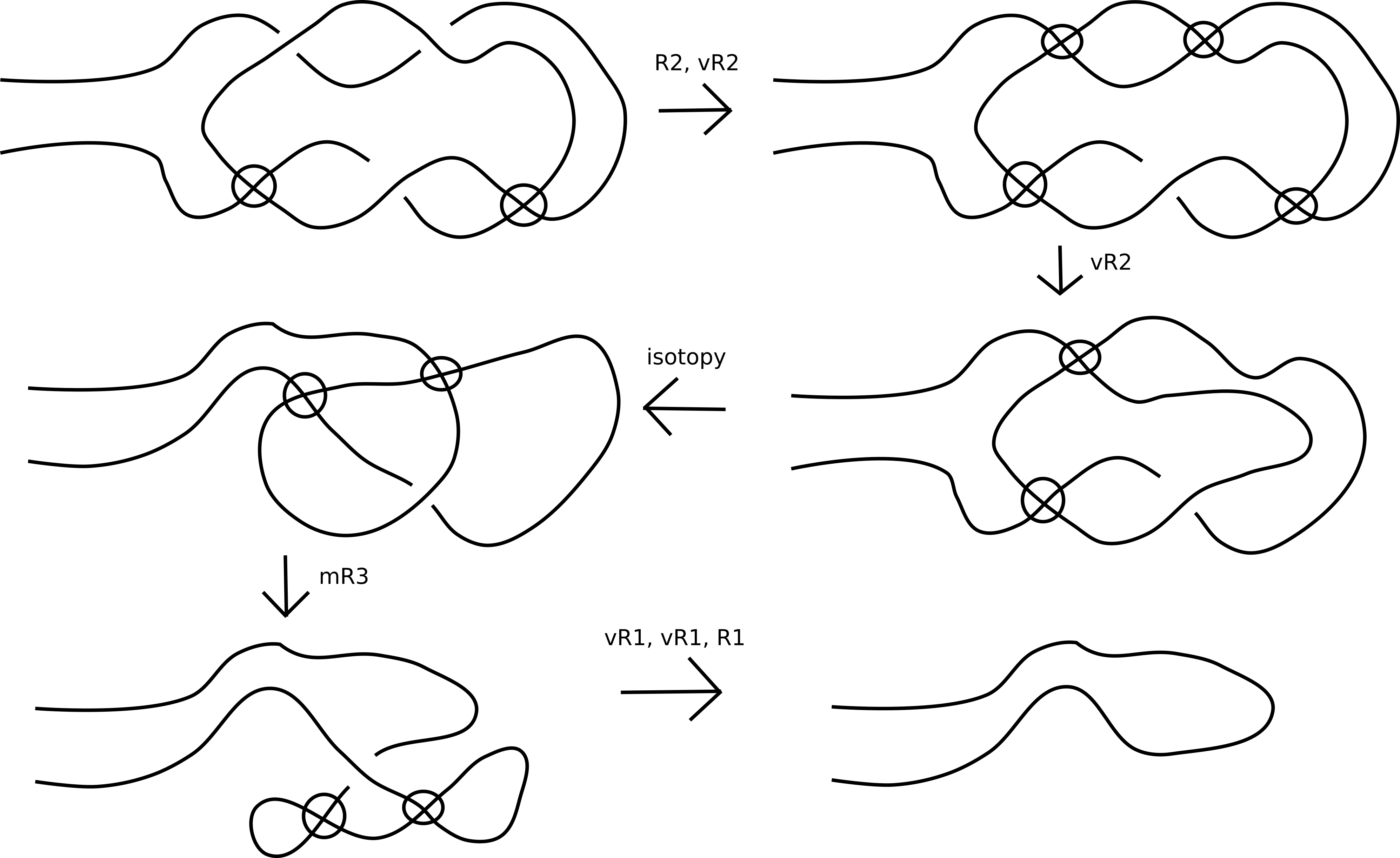}
\caption{How one resolution of Fig. \ref{ptnonzero} yields the (long) unknot in the unframed case. In the framed case only the very last step fails, so the knot is isotopic to the long knot with a single kink (whose $p_t$ value is $1$).}
\label{ptunknot}
\end{figure}

Now that we have established the invariant for long knots, we may investigate the effects of a few basic long knot moves (closure, connected sum) on the invariant.

\begin{theorem}\label{ptclosureinvariant}
The value of $p_t$ on long virtual knots only depends on the closure of the long virtual knot. 
Two long virtual knots with the same closure yield the same $p_t$ value.
\end{theorem}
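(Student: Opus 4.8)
The plan is to show that $p_t$ of a long virtual knot $K$ is already computed by a formula that reads off only the closure diagram $\widehat K$ of $K$, and then to invoke invariance of that formula under the Reidemeister moves available for closed virtual knots. First I would record the obvious bijection between the classical crossings of $K$ and those of its closure $\widehat K$: closing a long diagram only adds an arc through the point at infinity of $S^2$, which is crossing-free, so the two crossing sets are in bijection and the sign of each crossing is preserved.

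Next I would compare intersection indices crossing by crossing. Fix a classical crossing, let $a<b$ be the two parameters along $K$ mapping to it, and smooth it according to orientation. In the long diagram this produces the two-component tangle whose closed component is the loop obtained from $K|_{[a,b]}$ and whose open component is $K|_{(-\infty,a]}\cup K|_{[b,\infty)}$, reconnected at the smoothing site. Passing to the closure simply caps the open component by the crossing-free arc at infinity, turning it into a closed loop, while leaving the closed component alone; equivalently, smoothing and closure "commute" up to this capping. Consequently the partition of the remaining crossings into self-crossings of each component and mixed crossings is unchanged, and the local sign rule of Fig.~\ref{flatlinksign} is unaffected because it is local. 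Hence $|i_K(d)|=|i_{\widehat K}(d)|$ for every crossing $d$, the ordering ambiguity in the closed computation being killed by the absolute value.

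Combining these observations, $p_t(K)=\sum_{d}\mathrm{sgn}(d)\bigl(t^{|i(d)|}-1\bigr)$ in the unframed case, resp.\ $\sum_{d}\mathrm{sgn}(d)\,t^{|i(d)|}$ in the framed case, where every term is now computed from the closure diagram $\widehat K$; in fact in the framed case the right-hand side is literally $p_t(\widehat K)$ for the closed framed virtual knot $\widehat K$. The right-hand side is invariant under all the Reidemeister moves for closed virtual knots by the same move-by-move check as in the proof of Proposition~\ref{polyclosedframed} together with its unframed analogue from \cite{2}: R2, R3 and the mixed move behave exactly as there, and an (unframed) Reidemeister one move creates a crossing of intersection index $0$, contributing $\mathrm{sgn}(d)(t^{0}-1)=0$. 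Therefore the value depends only on $\widehat K$ as a virtual knot, so two long virtual knots with the same closure produce the same $p_t$.

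The step needing the most care — more an expository hazard than a mathematical one — is the geometric claim that forming the closure commutes with smoothing a crossing, i.e.\ that capping off the long strand introduces no new intersections between the two components and preserves the self/mixed status of every other crossing. One should also be explicit that in the framed case the resulting expression is exactly $p_t$ of $\widehat K$ as a framed virtual knot, while in the unframed case it is the $t^{|i(d)|}-1$ normalization, which is precisely the form of $p_t$ that is invariant for closed unframed virtual knots, so that the appeal to closed-knot invariance is legitimate in both settings.
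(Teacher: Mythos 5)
Your argument is correct, and it rests on the same underlying observation as the paper's proof: the computation of the intersection index never sees the point at infinity. The packaging differs slightly. The paper works at the Gauss-diagram level: it describes $|i(d)|$ as an arrow count between the two components obtained by smoothing the core circle, notes that the basepoint plays no role in that count, and concludes that sliding the point at infinity past a crossing (the only difference between long knots with the same closure) cannot change $p_t$. You instead show that smoothing commutes with forming the closure --- capping the open component with the crossing-free arc at infinity adds no mixed crossings and the sign rule of Fig.~\ref{flatlinksign} is local --- so that $p_t(K)$ is literally the closed-knot expression $\sum_d \mathrm{sgn}(d)(t^{|i(d)|}-1)$, resp.\ $\sum_d \mathrm{sgn}(d)t^{|i(d)|}$, evaluated on $\widehat K$, and then you invoke invariance of that expression under closed (framed) Reidemeister moves via Proposition~\ref{polyclosedframed} and its unframed analogue in \cite{2}. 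What your route buys is a slightly stronger and cleaner statement: $p_t$ of a long knot is identified outright with an invariant of the closed virtual knot $\widehat K$, so the case of closures that are merely equivalent (rather than the same diagram with a moved basepoint) is handled explicitly, whereas the paper's phrasing leans on the already-established long-knot invariance plus the basepoint-moving step. What the paper's route buys is brevity: once the Gauss-diagram description of $|i(d)|$ is in place, no appeal to the closed-case invariance proof is needed.
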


\begin{proof}
To prove the statement, we will see how to compute the intersection index (and hence the polynomial invariant) from the Gauss diagram of the long knot.
This requires us to see what the effect of smoothing a crossing is in the Gauss diagram setting.
This is illustrated in Fig. \ref{smoothinggauss}.
To compute the intersection index of a crossing (as per Definition \ref{def1}) we must smooth the Gauss diagram at that crossing and look at the resulting virtual string (see Remark \ref{virtualstringdef}), which has two components.
Arbitrarily order those components and count the number of arrows that go from the first component to the second minus the number of arrows that go from the second component to the first.
It is easy to see that the absolute value of that number is $|i(d)|$.
Note that every arrow that has head and tails in the same component does not influence the value of $|i(d)|$, as they correspond to self-crossings of that component.

\begin{figure}[!h]
\centering
\includegraphics[scale=.1]{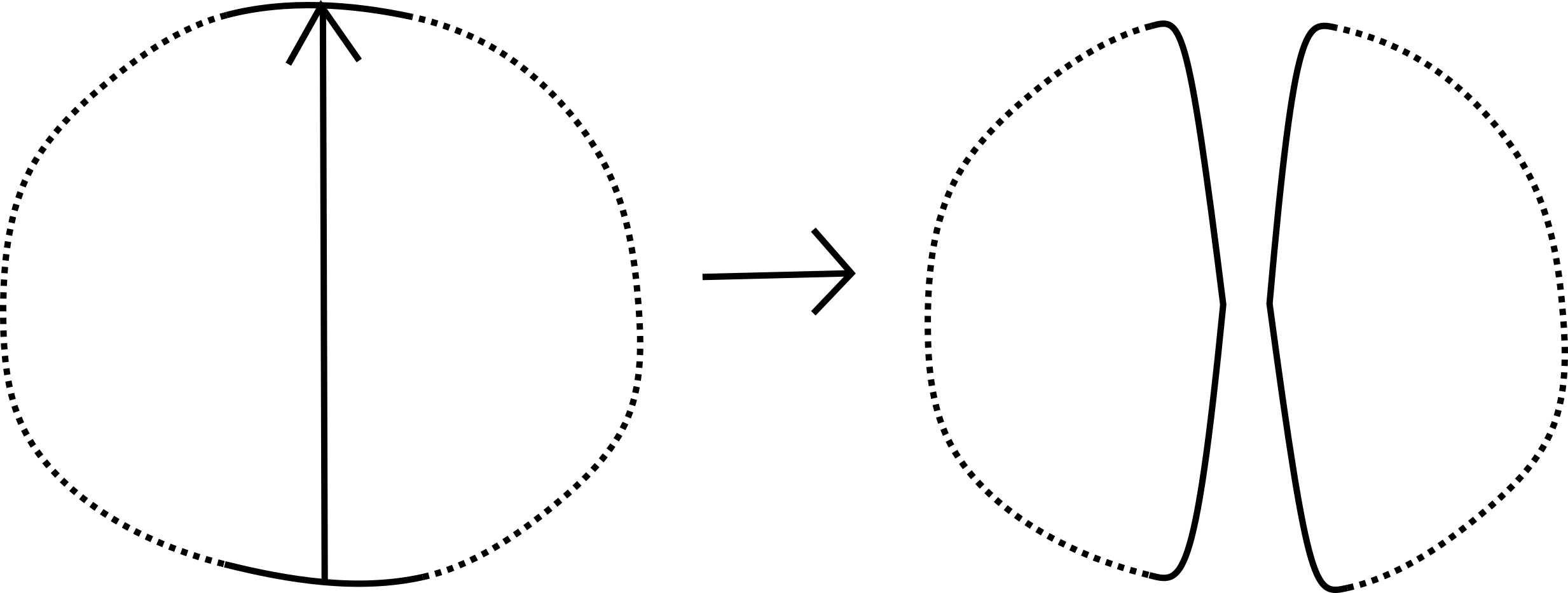}
\caption{The effect of smoothing a crossing on the Gauss diagram.}
\label{smoothinggauss}
\end{figure}

Now, to prove our assertion we need to show that when we move our fixed point past a crossing the intersection index doesn't change.
But that is the case, since the point at infinity never appeared in the previous description, and the only thing that can differ after moving the point at infinity past a crossing is which component is a long knot and which component is a closed knot.
In particular, the crossings between the two components do no change, so the intersection index remains the same, and the polynomial invariant of the knot does not vary.
Two long virtual knots with the same closure only differ by the positioning of the point at infinity, so by the above argument they will have the same polynomial invariant. 

\end{proof}

\begin{prop}
$p_t$ is additive with respect to the connected sum of long knots: $p_t(K_1\# K_2)=p_t(K_1)+p_t(K_2)$.
It is also independent on the order of the sum: $p_t(K_1\# K_2)=p_t(K_2\# K_1)$.
\end{prop}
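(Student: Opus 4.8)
The plan is to reduce both claims to the Gauss-diagram description of $p_t$ developed in the proof of Theorem~\ref{ptclosureinvariant}, where the intersection index $i(d)$ of a classical crossing $d$ is computed purely from the combinatorics of arrows relative to the two components obtained by smoothing the Gauss diagram at $d$. The key observation is that the connected sum $K_1\# K_2$ of long virtual knots corresponds, at the Gauss-diagram level, to concatenation: one opens up each $K_i$ at its point at infinity to get an interval carrying the chords of $K_i$, then lays the two intervals end to end and re-closes, placing the new point at infinity at one of the free ends. In particular, the chords of $K_1\# K_2$ are precisely the (disjoint) union of the chords of $K_1$ and the chords of $K_2$, with no chord of $K_1$ having an endpoint interleaved with a chord of $K_2$; the sign of each chord is inherited unchanged.

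First I would fix a classical crossing $d$ of $K_1\# K_2$; without loss of generality it belongs to $K_1$. I would then check that smoothing the Gauss diagram of $K_1\# K_2$ at $d$ produces two components whose ``$1\cap 2$'' arrow count is identical to the one obtained by smoothing $K_1$ alone at $d$. This is because the smoothing at $d$ only re-routes the core circle in the arc spanned by the two endpoints of $d$, both of which lie in the $K_1$-portion; every chord coming from $K_2$ lies entirely in the complementary arc, hence has both endpoints in the same one of the two resulting components and contributes zero to the intersection index regardless of how the components are ordered. Consequently $i_{K_1\# K_2}(d)=i_{K_1}(d)$, and likewise $\mathrm{sgn}(d)$ is unchanged. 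Summing over all crossings, which split as a disjoint union of the crossings of $K_1$ and those of $K_2$, gives
\begin{equation}
p_t(K_1\# K_2)=\sum_{d\in K_1}\mathrm{sgn}(d)\,(t^{|i(d)|}-\varepsilon)+\sum_{d\in K_2}\mathrm{sgn}(d)\,(t^{|i(d)|}-\varepsilon)=p_t(K_1)+p_t(K_2),
\end{equation}
where $\varepsilon=1$ in the unframed case and $\varepsilon=0$ in the framed case; the argument is identical in both cases since it never touches the normalization constant. Commutativity $p_t(K_1\# K_2)=p_t(K_2\# K_1)$ is then immediate: the right-hand side of the displayed formula is manifestly symmetric in $K_1$ and $K_2$, so both connected sums have the same polynomial regardless of whether the two long knots are actually isotopic.

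The one point requiring a little care — and the place I expect the only real friction — is justifying that connected sum of long virtual knots is well defined and is computed on Gauss diagrams by concatenation as described; this is where the ``long'' structure (a distinguished point at infinity past which no arrowhead or tail may move) does the work, ensuring that the chords of the two summands never interleave. I would either cite this as the standard description of $\#$ for long virtual knots or spell out the one-line verification using the band-pass / thickened-surface picture from Section~\ref{section2}. Once that is in hand, everything else is the bookkeeping above, and no appeal to invariance under Reidemeister moves is needed beyond what Theorem~\ref{ptclosureinvariant} already supplies.
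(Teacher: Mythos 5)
Your proof is correct, and its first half is essentially the paper's argument in different clothing: the paper phrases additivity in the planar picture (all the ``knottedness'' of each summand lives in its own closed ball, so after smoothing a crossing of $K_1$ the two resulting components can only meet inside the $K_1$ ball), while you phrase it on Gauss diagrams (the chords of $K_1\#K_2$ are the non-interleaved union of the chords of the summands, so every $K_2$ chord has both endpoints in one component of the smoothing and contributes nothing to $i(d)$); these are the same observation, and the point you flag as needing care is harmless, since concatenation of the chord intervals with the distinguished point at one end is exactly how $\#$ is modeled for long virtual knots and well-definedness is not even needed here because $p_t$ is already known to be an invariant. Where you genuinely diverge is the second assertion: the paper deduces $p_t(K_1\#K_2)=p_t(K_2\#K_1)$ from Theorem \ref{ptclosureinvariant}, i.e.\ from the fact that $p_t$ only sees the closure and the two sums have the same closure, whereas you read it off from the symmetry of the additivity formula and commutativity of addition in $\Z[t]$. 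Both are valid; your route is more self-contained (no appeal to the closure theorem) and is precisely the argument the paper itself falls back on later for the ordered polynomial $\overline{p}_t$, where the closure argument is unavailable, while the paper's route records the slightly stronger geometric fact that $K_1\#K_2$ and $K_2\#K_1$ actually share a closure, not merely a $p_t$ value.
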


\begin{proof}
Recall that, outside of a closed ball centered at the origin, the long knot is just a straight line. 
The connected sum of two long knots $K_1, K_2$ simply glues the end of $K_1$ to the beginning of $K_2$.
Because all of the ``knottedness'' of either knot is contained in the closed ball, whenever we smooth a crossing in $K_1$ the two components only intersect inside of the $K_1$ ball, and similarly for $K_2$.
Thus, the intersection index does not ``see'' the connected sum, and we can split the polynomial invariant in the following way:

\begin{equation}\begin{split}p_t(K_1\#K_2)=\sum_{v\in K_1\#K_2}(t^{|i(d)|}-1)&=\sum_{v\in K_1}(t^{|i(d)|}-1)+\sum_{v\in K_2}(t^{|i(d)|}-1)\\&=p_t(K_1)+p_t(K_2).\end{split}\end{equation}
The second assertion holds because the polynomial invariant only sees the closure of the long virtual knot and $K_1\#K_2$ has the same closure as $K_2\#K_1$.
\end{proof}

\begin{cor}
Let $p_t(mod 2)$ be the polynomial invariant (as defined above) with coefficients in $\Z_2$ instead of $\Z$.
Then $p_t(mod 2)$ is a homotopy invariant of long virtual knots.
\end{cor}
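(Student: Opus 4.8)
The plan is to follow exactly the strategy used in the proof of the corresponding corollary in the framed closed case: $p_t$ is already known (by Proposition~\ref{polylong}) to be invariant under all classical and virtual Reidemeister moves in the long setting, both framed and unframed, so reducing the coefficients mod~$2$ preserves this invariance. It therefore only remains to check invariance under the CC move, and for that we argue crossing-by-crossing on the Gauss-diagram (equivalently, diagram) level.

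First I would recall that, as established in the proof of Theorem~\ref{ptclosureinvariant}, the intersection index $i(d)$ of a classical crossing $d$ is computed from the oriented smoothing at $d$, and oriented smoothings do not see the over/under information of $d$. Hence applying the CC move at a crossing $d$ leaves $|i(d)|$ unchanged; moreover, smoothing at any other crossing $d'$ also cannot detect whether $d$ was switched, since the flat virtual tangle obtained by smoothing at $d'$ is literally the same in both cases (the CC move at $d$ is already invisible after we pass to the flat class). Consequently the only effect of a CC move at $d$ on the formula for $p_t$ is to replace the coefficient $sgn(d)=\pm 1$ of the monomial attached to $d$ by $\mp 1$; every other term in the sum is untouched. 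Since $+1\equiv -1 \pmod 2$, the value of $p_t(\mathrm{mod}\,2)$ is unchanged. In the unframed case the monomial is $sgn(d)(t^{|i(d)|}-1)$ and in the framed case it is $sgn(d)\,t^{|i(d)|}$, but in either case the argument is identical since only the overall sign flips.

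Combining the two observations: $p_t(\mathrm{mod}\,2)$ is invariant under all Reidemeister moves (classical and virtual, including the framed Reidemeister~one move in the framed category) and under the CC move, hence it descends to a well-defined invariant of flat long virtual knots, i.e.\ a homotopy invariant of long virtual knots, in both the framed and unframed settings. There is essentially no obstacle here; the only point requiring a moment of care is confirming that a CC move at $d$ genuinely has no side effect on the indices $i(d')$ of the other crossings, and this is immediate once one recalls that the intersection index is defined purely in terms of oriented smoothings and the associated flat tangle.
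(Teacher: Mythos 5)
Your proposal is correct and follows essentially the same argument the paper uses for the analogous corollary in the closed framed case (which is the intended proof here): invariance under Reidemeister moves is inherited from $p_t$, and a CC move at a crossing $d$ only flips $sgn(d)$ from $\pm1$ to $\mp1$ while leaving all intersection indices unchanged, which is invisible mod $2$. Your extra remark that the CC move at $d$ cannot affect the terms of other crossings, because their indices are computed from flat classes of oriented smoothings, is exactly the right justification and matches the paper's reasoning.
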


The motivation for the absolute value sign in the original polynomial invariant was our inability to distinguish the two components of the link we get after smoothing a classical crossing.
However, in the case of long virtual knots, the two components can be distinguished in a very natural way: one of them is a long knot, the other is a closed knot.
We can thus extend the polynomial invariant to the following, slightly more general, invariant.

\begin{definition}
Given a long virtual knot $K$, its \emph{\ul{ordered} polynomial invariant} $p_t(K)\in\Z[t]$ is given by the formula 

\begin{equation}\overline{p}_t(K)=\begin{cases}\sum_d sgn(d)(t^{i(d)}-1)&\text{ in the unframed case }\\
\sum_d sgn(d)t^{i(d)} & \text{ in the framed case }\end{cases}\end{equation}
where $d$ is taken over all classical crossings of $K$ and $i(d)$ is the intersection index of Definition \ref{def1} where we always pick the long component as component number one.
\end{definition}

\begin{prop}
The polynomial $\overline{p}_t(K)$ is a Vassiliev invariant of order one of long virtual knots, both framed and unframed.
\end{prop}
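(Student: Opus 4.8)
The plan is to reproduce the proof of Proposition~\ref{polylong} almost verbatim; the one genuinely new point is that the ordering used in the definition of $i(d)$ is now \emph{canonical}, so the moves must be checked to preserve the signed intersection indices and not merely their absolute values. First I would note that $\overline p_t$ is well defined: smoothing a classical crossing $d$ of a long virtual knot yields a two-component flat virtual tangle with exactly one long and one closed component (see the last Remark of Section~\ref{section2}), so the convention ``the long component is component $1$'' is unambiguous, and by the Lemma following Definition~\ref{def1long} the number $i(d)=\sum_{x\in 1\cap 2}\mathrm{sgn}(x)$ computed with this ordering is invariant under framed and unframed flat equivalence. Hence $\overline p_t(K)$ depends only on $K$.

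Next I would check invariance under Reidemeister moves, following Propositions~\ref{polyclosedframed} and~\ref{polylong}. An unframed classical Reidemeister~1 move creates one crossing $d$; smoothing at $d$ splits off a trivial circle disjoint from the rest, so $i(d)=0$ and the new term $\mathrm{sgn}(d)(t^{0}-1)$ vanishes. A framed Reidemeister~1 move creates two crossings of opposite sign, and smoothing at either gives two components with empty intersection (Fig.~\ref{figure1}), so the two new terms are $+t^{0}-t^{0}=0$. A classical Reidemeister~2 move creates two crossings $d_1,d_2$ of opposite sign; smoothing at either splits off the same bigon circle, and up to isotopy of the resulting tangle the two smoothings yield the \emph{same} ordered pair of components (the long component of one going to the long component of the other), so $i(d_1)=i(d_2)$ as signed indices and the two new terms cancel. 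For classical Reidemeister~3 and the mixed move there is a sign- and long/closed-preserving bijection of crossings, so all the $i(d)$ are unchanged. This shows $\overline p_t$ is an invariant in both categories.

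For order one, let $K_{dd'}$ be a singular long virtual knot with double points $d,d'$, with resolutions $K_{ab}$, $a,b\in\{+,-\}$, so that $\overline p_t(K_{dd'})=\overline p_t(K_{++})-\overline p_t(K_{+-})-\overline p_t(K_{-+})+\overline p_t(K_{--})$. Since an oriented smoothing does not see the sign of the crossing being smoothed, $i(c)$ is unchanged under switching the sign at $d$ or at $d'$ for every crossing $c$ other than the one being switched; the alternating signs then kill every term, giving $\overline p_t(K_{dd'})=0$. For non-triviality on one double point, resolving the double point of the knot $K$ in Fig.~\ref{ptnonzero} gives the long unknot (with $\overline p_t=0$) and a long virtual knot on which $\overline p_t$ is nonzero, by the same computation as in Proposition~\ref{polylong}; hence $\overline p_t(K)\neq 0$, and the framed case is identical. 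Therefore $\overline p_t$ is a Vassiliev invariant of order exactly one.

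The only step that is not a direct transcription of the earlier proofs is the Reidemeister~2 case: one must verify that the two smoothings produce the same \emph{signed} intersection index, which reduces to checking that the long/closed labelling of the two resulting components matches up between the two smoothings. Tracing the strand of the knot through the Reidemeister~2 tangle (equivalently, tracking the two new arrows in the Gauss diagram and using the smoothing description from the proof of Theorem~\ref{ptclosureinvariant}) confirms this, and is the main---though modest---obstacle.
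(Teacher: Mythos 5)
Your proof is correct and takes essentially the same route as the paper, which simply declares the argument analogous to Proposition \ref{polylong} (with the same nonvanishing example from Fig. \ref{ptnonzero}). The one point you single out as new---checking in the Reidemeister 2 case that the long/closed labelling, and hence the signed intersection index, agrees for the two smoothings---is exactly the detail the paper leaves implicit, and you handle it correctly.
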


The proof is analogous to the one of Proposition \ref{polylong}, including the same example that $\overline{p}_t(K)\neq 0$, see Fig. \ref{ptnonzero}.
It is then worth asking whether the ordered polynomial invariant is stronger than the polynomial invariant.
As a reminder, invariant $A$ is stronger than invariant $B$ if whenever $A(K)=A(K')$ we also have $B(K)=B(K')$, but there is a pair $K_1, K_2$ such that $B(K_1)=B(K_2)$ and yet $A(K_1)\neq A(K_2)$.

\begin{prop}
The ordered polynomial invariant is stronger than the polynomial invariant.
\end{prop}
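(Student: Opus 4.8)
The plan is to prove the two halves of the definition of ``stronger'' separately. For the first half, I would show that $\overline{p}_t$ determines $p_t$. This is essentially immediate from the definitions: each term of $\overline{p}_t(K)$ is $sgn(d)(t^{i(d)}-1)$ (or $sgn(d)t^{i(d)}$ in the framed case), where the intersection index $i(d)$ is computed with the long component always labelled $1$; the corresponding term of $p_t(K)$ is the same expression with $i(d)$ replaced by $|i(d)|$. So one recovers $p_t$ from $\overline{p}_t$ by the substitution $t \mapsto t$ on positive-exponent monomials and $t^{-k}\mapsto t^{k}$ on negative-exponent ones — more precisely, writing $\overline{p}_t(K) = \sum_k a_k(t^k - 1)$ (resp. $\sum_k a_k t^k$) with $k$ ranging over $\Z$, we get $p_t(K) = \sum_k a_k(t^{|k|}-1)$ (resp. $\sum_k a_k t^{|k|}$). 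Since this is a well-defined operation $\Z[t,t^{-1}]\to\Z[t]$ applied termwise, $\overline{p}_t(K)=\overline{p}_t(K')$ forces $p_t(K)=p_t(K')$.

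For the second half I must exhibit a pair of long virtual knots $K_1, K_2$ with $p_t(K_1)=p_t(K_2)$ but $\overline{p}_t(K_1)\neq\overline{p}_t(K_2)$. The natural idea is to take two knots whose classical crossings have intersection indices that agree in absolute value but differ in sign — for instance a knot with a single classical crossing $d$ of sign $+1$ and intersection index $i(d) = +n$ for some $n\geq 1$, versus a knot with a single classical crossing of sign $+1$ and intersection index $-n$. Then $p_t$ evaluates to $t^n - 1$ (or $t^n$) on both, while $\overline{p}_t$ gives $t^n-1$ on one and $t^{-n}-1$ on the other, which are distinct in $\Z[t,t^{-1}]$. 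Concretely I would build these from small Gauss diagrams: recall from the proof of Theorem \ref{ptclosureinvariant} that $i(d)$ is read off the smoothed Gauss diagram as the signed count of arrows running between the two components (long vs.\ closed), with the long component taken first. So I need a Gauss diagram with a distinguished point, one ``structural'' arrow whose smoothing separates a closed loop carrying, say, one extra arrow pointing into it from the long strand; reversing the role of that extra arrow (or equivalently mirroring/reorienting) flips the sign of $i(d)$ without changing $sgn(d)$ or $|i(d)|$. I would draw this explicitly and verify $i(d)=\pm 1$ by inspection.

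The main obstacle is not the algebra but ensuring the two candidate Gauss diagrams genuinely represent \emph{distinct realizable} long virtual knots whose only $\overline{p}_t$-difference is the claimed sign — i.e.\ that the diagrams are valid (every arrow pair is consistent, the distinguished point placement is legitimate, and no arrowhead/tail is forced past it) and that I haven't accidentally produced two knots that are flat-isotopic or that also differ in $p_t$. I expect that a two-crossing example (one crossing being resolved/smoothed, one remaining to contribute a single inter-component arrow after smoothing) suffices; I would double-check invariance of the computed values under the relevant Reidemeister moves as a sanity check, but since $p_t$ and $\overline{p}_t$ are already established invariants by the preceding propositions, it is enough to compute their values on one representative of each knot. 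Once the example is in hand, combining it with the first half completes the proof.
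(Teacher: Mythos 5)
Your first half is exactly the paper's argument, just made explicit: $p_t$ is the image of $\overline{p}_t$ under the $\Z$-linear map $\Z[t,t^{-1}]\to\Z[t]$ sending $t^k\mapsto t^{|k|}$, so equal ordered polynomials force equal polynomials. For the second half your strategy is also the paper's (exhibit a pair whose crossings have intersection indices agreeing in absolute value but not in sign), but you propose to build your own example, whereas the paper uses the explicit pair of Fig.~\ref{k1k2smoothpoly}: two long knots differing by switching a positive and a negative crossing with $i(+)=-2$, $i(-)=+2$, so these contributions cancel in $p_t$ while $\overline{p}_t(K_1)-\overline{p}_t(K_2)=2t^{-2}-2t^2$.

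Two things in your sketch need repair. First, your opening instance cannot exist: a long virtual knot with a single classical crossing $d$ always has $i(d)=0$, since after smoothing $d$ there are no classical crossings left to count between the two components; so ``one crossing with $i(d)=\pm n$, $n\geq 1$'' is vacuous. Second, in your refined two-crossing plan both crossings contribute terms to $p_t$ and $\overline{p}_t$, and you never check the second one. The check does go through: take a based Gauss diagram with two linked arrows $d,e$, both decorated $+$, so $i(d),i(e)\in\{\pm1\}$; reversing the direction of $e$ flips the sign of $i(d)$ but does not change the components of the $e$-smoothing, hence leaves $i(e)$ and both crossing signs untouched. The two knots then have equal $p_t$ (namely $2(t-1)$ unframed, $2t$ framed), while their $\overline{p}_t$ values differ by $\pm(t-t^{-1})\neq 0$. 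Your worries about realizability and accidental flat isotopy are moot: every based Gauss diagram is realizable as a long virtual knot, and the definition of ``stronger'' only needs $p_t(K_1)=p_t(K_2)$ with $\overline{p}_t(K_1)\neq\overline{p}_t(K_2)$, which already forces the knots to be distinct. With those fixes your route is correct, and in fact gives a smaller example than the paper's.
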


\begin{proof}
Since we get from the ordered polynomial invariant to the polynomial invariant by taking the absolute value of all the exponents, if two knots have the same ordered polynomial invariant they will also have the same polynomial invariant.
The two knots that show that the ordered polynomial is stronger than the polynomial invariant are pictured in Fig. \ref{k1k2smoothpoly}.

\begin{figure}[!h]
\centering
\includegraphics[scale=.115]{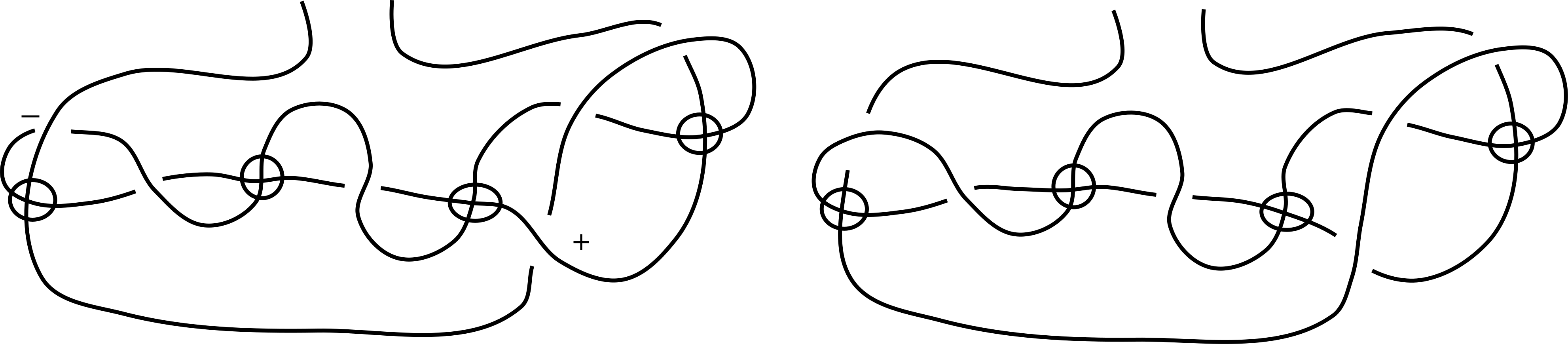}
\caption{Two long knots, $K_1$ (left) and $K_2$ (right), with the same $p_t$ value but different $\overline{p}_t$ value.}
\label{k1k2smoothpoly}
\end{figure}

These only differ at the crossings labeled $-$ and $+$ in $K_1$, which are switched in $K_2$, and these crossings have intersection index given by $i(-)=+2$ and $i(+)=-2$ (assuming we take the long component as component $1$).
Then in $p_t$, which looks at $|i(d)|$, their contributions will cancel out, and since the other crossings coincide we have $p_t(K_1)=p_t(K_2)$.

On the other hand, using the above observations, computing $\overline{p}_t(K_1)-\overline{p}_t(K_2)$ yields $2t^{-2}-2t^2$ (both in the framed and unframed case), so $\overline{p}_t(K_1)\neq \overline{p}_t(K_2)$.
\end{proof}

\begin{remark}
The statement on connected sum of long virtual knots still holds, so $\overline{p}_t(K_1\#K_2)=\overline{p}_t(K_1)+\overline{p}(K_2)$, and commutativity of addition in $\Z[t]$ still gives us that $\overline{p}_t(K_1\#K_2)=\overline{p}_t(K_2\#K_1)$. 
However, the lack of existence of an ordered polynomial for closed knots makes an analog of Proposition \ref{ptclosureinvariant} impossible.
In fact, it was precisely our ability to distinguish the two curves after the smoothing that led us to the definition of the ordered polynomial invariant. 
\end{remark}

\subsection{The smoothing invariant}
\label{section1}
\begin{definition}
Let $K$ be a virtual knot diagram.
Let $K^d_{sm}$ be the unordered two-component virtual link obtained by smoothing $K$ at a classical crossing $d$, and $[K^d_{sm}]$ its flat equivalence class.
Then the \emph{smoothing invariant}, which takes values in the free abelian group generated by homotopy classes of framed virtual links with two components, is given by the formula

\begin{equation}S(K)=\sum_d sgn(d)[K^d_{sm}]\end{equation}

\noindent where once again the sum is over all classical crossings of $K$.
\end{definition}

\begin{prop}
The smoothing invariant is an order one Vassiliev invariant of framed virtual knots.
\end{prop}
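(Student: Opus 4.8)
The plan is to mirror the strategy already used for the polynomial invariant in Proposition \ref{polyclosedframed}, since $S$ is defined by the same ``alternating sum over crossings'' template, just with values in the free abelian group on homotopy classes of two-component framed virtual links rather than in $\Z[t]$. First I would verify that $S$ is a genuine invariant by checking invariance under each of the framed Reidemeister moves (framed R1, classical R2, classical R3) and the mixed Reidemeister move. For framed R1: the move introduces two crossings of opposite sign, and smoothing at either one produces (by Fig. \ref{figure1}, already established) a flat virtual link whose two components are unlinked and one of which is an unknot; crucially, the move produces the \emph{same} flat link class $[K^+_{sm}]=[K^-_{sm}]$, so the two new terms $+[K^+_{sm}] - [K^-_{sm}]$ cancel in the free abelian group. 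For classical R2: again two crossings of opposite sign are created, and their smoothings yield homotopic two-component links (one of the two smoothings splits off a small trivial loop but the resulting link is flatly equivalent to the other), so the contributions cancel. For R3 and the mixed move: one exhibits a sign- and smoothing-class-preserving bijection between the crossings before and after the move, so $S$ is unchanged term by term.

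Next I would prove that $S$ has order $\leq 1$, i.e. that it vanishes on any virtual knot $K_{dd'}$ with two double points. Resolving both double points gives four virtual knots $K_{ab}$, $a,b\in\{+,-\}$, and by the Vassiliev resolution formula $S(K_{dd'}) = S(K_{++}) - S(K_{+-}) - S(K_{-+}) + S(K_{--})$. Now each $S(K_{ab})$ is itself a sum over classical crossings; the crossings other than $d,d'$ are common to all four diagrams and their smoothing classes $[K^e_{sm}]$ do not depend on how $d,d'$ are resolved (because oriented smoothing at $e$ is performed in all four diagrams and the flat equivalence class only remembers the underlying flat diagram with $d,d'$ present as flat double points, which is the same in all four), so these terms cancel in the alternating sum exactly as in Proposition \ref{polyclosedframed}. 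The term coming from smoothing at $d$ appears in $S(K_{++})$ and $S(K_{+-})$ with signs determined by the resolution of $d'$ (which does not affect the $d$-smoothed flat link class), hence appears with a $+$ and a $-$ and cancels; similarly for the $d'$ term between $S(K_{++})$ and $S(K_{-+})$, and symmetrically for the remaining pair. Thus $S(K_{dd'})=0$.

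Finally I would exhibit a singular virtual knot with one double point on which $S$ does not vanish, so that $S$ genuinely has order exactly one; the natural choice is the same example used in \cite{2} (Fig. 12 there, gluing one of the upper crossings), checking that the two smoothings produce distinct, and distinctly non-trivial, homotopy classes of two-component framed virtual links so that their difference is nonzero in the free abelian group. The main obstacle, and the only place where the framed setting requires genuine care rather than a transcription of Henrich's argument, is confirming that framed R1 really does not perturb $S$: one must be sure both that the two crossings it creates carry opposite signs \emph{and} that their oriented smoothings give the \emph{same} flat framed link class (not merely links that are flatly equivalent as unframed links), which is where Fig. \ref{figure1} and the fact that framed R1 adds only a self-kink are used. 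Everything else is the routine bookkeeping already carried out for $p_t$.
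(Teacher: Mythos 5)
Your overall plan is exactly the paper's plan (the paper's proof is literally ``analogous to the one for the polynomial invariant, including the same example''), and most of your write-up fills that in correctly: the order-$\leq 1$ computation via the four resolutions is right, the nonvanishing example is the one the paper uses, and your treatment of framed Reidemeister one is correct — with the paper's version of the move (two kinks on the same side, cf.\ the definitions of $\hat{M}_1,\hat{M}_2$, which add \emph{pairs} of annihilating or core elements), both smoothings give literally the same diagram, namely the knot retaining one kink together with a disjoint circle, so the two terms cancel.

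The genuine gap is at classical Reidemeister two, which you wave through as ``routine bookkeeping'' (and you explicitly claim framed R1 is the only place needing care). In the framed category R2 is precisely where the analogy with $p_t$ breaks. For the R2 move with parallel-oriented strands (interleaved chords in the Gauss diagram) the two smoothings are indeed isotopic. But for the antiparallel R2 (nested chords), smoothing either crossing leaves the \emph{other} crossing as a kink, and it lands on a \emph{different} component in the two cases: smoothing the inner chord puts a kink on one component, smoothing the outer chord puts it on the other. In the unframed theory this is harmless because flat R1 deletes the kink; in the framed theory a lone kink cannot be removed (framed flat R1 only cancels kinks in pairs, and every framed flat link move preserves the parity of the number of classical self-crossings of each individual component, as well as the flat framed knot type of each component). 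So the equality $[K^{c_1}_{sm}]=[K^{c_2}_{sm}]$ that your cancellation needs does not follow, and on the face of it fails: performing an antiparallel R2 on a framed unknot carrying one kink and smoothing the two new crossings gives, respectively, (unknot with one kink) $\sqcup$ (unknot with one kink) and (unknot) $\sqcup$ (unknot with two kinks), which are distinct flat framed links. Your parenthetical description of the R2 smoothings (``one of the two smoothings splits off a small trivial loop but the resulting link is flatly equivalent to the other'') is also not what happens — either neither smoothing produces a kink (parallel case) or both do, on different components (antiparallel case). So to complete the proof you must either supply a genuine argument that these two flat framed classes agree, or modify the construction (e.g.\ weaken the target group or add a correction term as in the unframed/long definitions); note that $p_t$ is immune to this issue only because the intersection index ignores self-crossings, which is exactly the information the smoothing invariant retains.
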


\begin{proof}
The proof is analogous to the one for the polynomial invariant, including the same example of knot such that $S(K)\neq 0$.
\end{proof}
\begin{prop}\label{smoothingstronger}
The smoothing invariant is stronger than the polynomial invariant
\end{prop}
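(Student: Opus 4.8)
The plan is to show that the smoothing invariant $S$ determines the polynomial invariant $p_t$, and then exhibit a pair of framed virtual knots on which $p_t$ agrees but $S$ does not. For the first part, observe that $p_t$ is obtained from $S$ by a group homomorphism: the intersection index $i(d)$ of a crossing $d$ depends only on the flat equivalence class $[K^d_{sm}]$ of the smoothed two-component link (this is exactly the content of the lemma following Definition~\ref{def1}, since $i$ is a flat-isotopy invariant of ordered two-component flat links, and taking the absolute value removes the dependence on the ordering). Hence there is a well-defined map $\Phi$ from the free abelian group on homotopy classes of two-component framed virtual links to $\Z[t]$ sending a generator $[L]$ to $t^{|i(L)|}$, and by construction $\Phi(S(K)) = \sum_d sgn(d)\, t^{|i(d)|} = p_t(K)$. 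Therefore $S(K)=S(K')$ immediately forces $p_t(K)=p_t(K')$, giving one of the two conditions in the definition of ``stronger''.

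For the second part I would produce two framed virtual knots $K$, $K'$ with $p_t(K)=p_t(K')$ but $S(K)\neq S(K')$. The natural source of such an example is a pair of knots differing at crossings whose smoothings yield flat links $L_1$, $L_2$ that have the same (absolute) intersection index but are not homotopic as flat virtual links; then the $t$-powers recorded by $p_t$ coincide while the formal sums recorded by $S$ differ. This is precisely the kind of example Henrich uses in \cite{2} for the unframed case, so I would take her example and check that it survives in the framed category: the only thing to verify is that passing from unframed to framed (i.e.\ allowing the framed R1 move in place of classical R1) does not accidentally identify the relevant flat link classes, and that no term of $S$ is altered. Since framed R1 only introduces or removes self-crossings of a single component — which affect neither $i(L)$ nor the homotopy class of the two-component link in a way that would collapse $L_1$ onto $L_2$ — the distinction is preserved. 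One then computes $S(K)$ and $S(K')$ directly from the diagrams and observes that the coefficients of the relevant link classes disagree, while $\Phi$ collapses that disagreement.

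The main obstacle is the second part: one needs a concrete pair of diagrams together with a proof that the two flat link classes appearing in $S$ really are distinct. Separating flat virtual link classes is not automatic — it requires an auxiliary invariant of two-component flat virtual links strong enough to tell $L_1$ from $L_2$ (for instance the based matrix of the associated virtual string, or a suitable linking-type polynomial). So the real work is (i) choosing the example, and (ii) invoking or constructing an invariant that distinguishes the smoothed links; the homomorphism argument in the first part is routine. I would lean on the fact that the based-matrix machinery developed later in the paper (section~\ref{section4}) applies to these links, so the separation can be certified there, and note that in the framed setting one must use the version of the based matrix that accounts for the Whitney-trick indeterminacy mentioned in the introduction.
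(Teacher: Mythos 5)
Your first half is fine and matches the paper's (terser) argument: $|i(d)|$ is determined by the flat class $[K^d_{sm}]$, so the linear map sending a generator $[L]$ to $t^{|i(L)|}$ carries $S(K)$ to $p_t(K)$, and $S(K)=S(K')$ forces $p_t(K)=p_t(K')$. The problem is the second half, which is where all the actual content of Proposition \ref{smoothingstronger} lives, and which you defer rather than carry out. You need (i) a concrete pair $K_1,K_2$ (the paper uses the pair in Fig.\ \ref{smoothingandpoly}, differing by switching a $+$ and a $-$ crossing with equal $|i(d)|$, so $p_t(K_1)=p_t(K_2)$ and $S(K_1)-S(K_2)=2([K_{1+}]-[K_{1-}])$), and (ii) a certified separation of the two flat two-component link classes $[K_{1+}]$ and $[K_{1-}]$. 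Your proposed certifying tool is also off target: the based-matrix machinery of section \ref{section4} is built for virtual strings, i.e.\ one-component flat (possibly singular) knots, and does not apply as stated to two-component flat virtual links, so ``lean on section \ref{section4}'' does not close the gap.

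What the paper actually does at this point is introduce an auxiliary invariant of ordered two-component flat links, a Goldman--Lie-type bracket $B(L)=\sum_{x\in 1\cap 2} sgn(x)[L_x]$, where $L_x$ is the flat knot obtained by smoothing the inter-component crossing $x$; this converts the link problem into a knot problem, where $p_t(mod\,2)$ (already shown to be a homotopy invariant in the framed setting) can be applied. The computation gives $p_t(mod\,2)(B([K_{1+}]))=0$ while $p_t(mod\,2)(B([K_{1-}]))=t^3+t$, so $[K_{1+}]\neq[K_{1-}]$ and hence $S(K_1)\neq S(K_2)$. Without some such explicit invariant-plus-computation (or an honest substitute), your argument establishes only that $S$ is at least as strong as $p_t$, not that it is strictly stronger; so as written there is a genuine gap at exactly the step you flag as ``the real work.''
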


\begin{proof}
Recall that the statement means that if two knots have the same $S$ value they have the same $p_t$ value, and that there are two knots $K_1, K_2$ such that $p_t(K_1)=p_t(K_2)$ but $S(K_1)\neq S(K_2)$.

Since the intersection index can be recovered from $[K^d_{sm}]$, if two knots have the same smoothing invariant they must also have the same polynomial invariant (because they have the same collection of $[K^d_{sm}]$, which determines $|i(d)|$, which determines $p_t$).
The two knots with $p_t(K_1)=p_t(K_2)$ but $S(K_1)\neq S(K_2)$ are pictured in Fig. \ref{smoothingandpoly}.

\begin{figure}[!h]
\centering
\includegraphics[scale=.17]{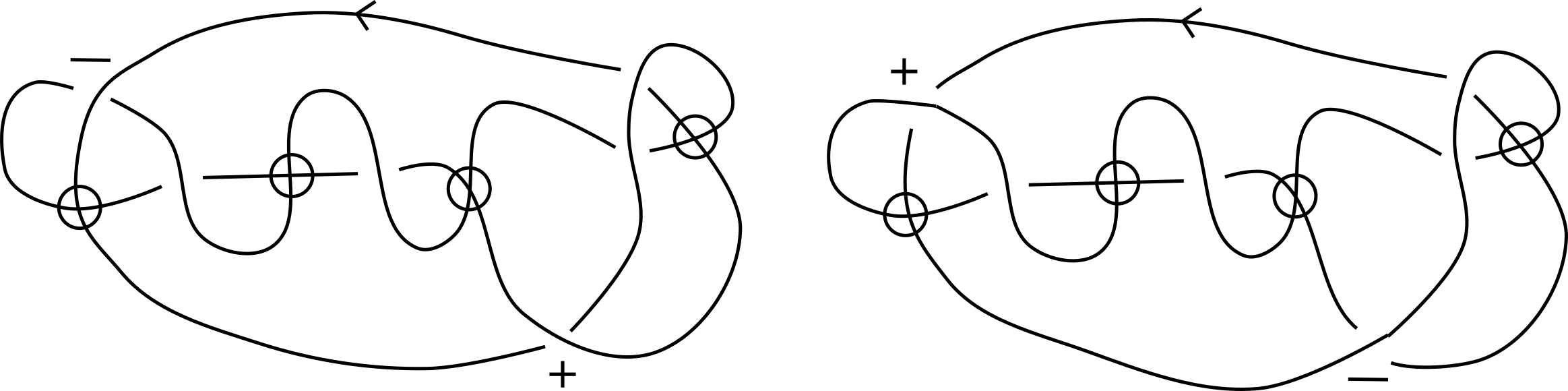}
\caption{The two knots that show that the smoothing invariant is stronger than the polynomial invariant.}
\label{smoothingandpoly}
\end{figure}

It is easy to see that these two knots simply differ by a switching of the $+$ and $-$ crossings in the picture.
Because both crossings have the same $|i(d)|$ value and have opposite sign, the overall contribution to the value of $p_t$ is zero.
This shows that $p_t(K_1)=p_t(K_2)$.
Computing $S(K_1)-S(K_2)$ we get $2([K_{1+}]-[K_{1-}])$, where $K_{1+}$ is the flat virtual link obtained by smoothing at the $+$ crossing in Fig. \ref{smoothingandpoly},
and similarly for $K_{1-}$.
To distinguish these two flat long knot classes we will use another invariant, closely related to the Goldman-Lie bracket.
This is an invariant of ordered two component virtual links, and is defined by the formula
\begin{equation}B(K)=\sum_{x\in 1\cap 2}sgn(x)[K_x],\end{equation}
where $x$ is a crossing that involves both components and $[K_x]$ is the flat class of the smoothing of $K$ along $x$.
Invariance under Reidemeister moves is an easy check, as any cases where the move only involves one of the components do not contribute to $B$.
Computing $B$ gives us a linear combination of flat virtual knots (since smoothing a crossing involving two components will join them).

To distinguish $B(K_{1+})$ from $B(K_{1-})$ we compose them with $p_t (mod 2)$; we get that $p_t(mod 2)(B(K_{1+}))=0$ and $p_t(mod 2)(B(K_{1-}))=t^3+t$.
Since they have distinct $p_t(mod 2)$ value, $B(K_{1+})\neq B(K_{1-})$, so $[K_{1+}]\neq [K_{1-}]$, so $S(K_1)\neq S(K_2)$.

\end{proof}

\begin{definition}
Let $K$ be a long virtual knot diagram.
Let $K^d_{sm}$ be the two-component virtual tangle obtained by smoothing $K$ at a classical crossing $d$, and $[K^d_{sm}]$ its flat equivalence class.
Denote by $[K^0_{link}]$ the flat equivalence class of the union of $K$ with an unlinked copy of the unknot.
Then the smoothing invariant is given by the formula

\begin{equation}S(K)=\begin{cases}
\sum_d sgn(d)([K^d_{sm}]-[K^0_{link}]) & \text{ in the unframed case}\\
\sum_d sgn(d)[K^d_{sm}] & \text{ in the framed case}
\end{cases}\end{equation}
where once again the sum is over all classical crossings of $K$.
\end{definition}

\begin{prop}
The smoothing invariant is a Vassiliev invariant of order one for long virtual knots, both framed and unframed.
\end{prop}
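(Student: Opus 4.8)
The plan is to follow the proof of Proposition \ref{polylong} essentially verbatim. Three things must be checked: (i) $S$ is invariant under the Reidemeister moves of the long framed and the long unframed theories; (ii) $S$ vanishes on every long virtual knot with two double points; and (iii) $S$ is nonzero on some long virtual knot with one double point. Throughout, $S(K)$ is read in the free abelian group on flat (homotopy) classes of two-component virtual tangles with one closed and one long component; the one preliminary remark is that adjoining a split trivial closed component to a long knot is well defined up to flat equivalence, so the correction term $[K^0_{link}]$ makes sense and is independent of the chosen diagram of $K$.

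For invariance, the only move behaving differently from the closed framed case is Reidemeister one. In the framed case, framed R1 introduces two new crossings of opposite sign, say $+$ and $-$; smoothing along either one leaves the long component untouched and splits off a trivial circle, so $[K^+_{sm}]=[K^-_{sm}]$ (exactly as in Fig. \ref{figure1}) and the two new terms $+[K^+_{sm}]-[K^-_{sm}]$ cancel. In the unframed case, ordinary R1 introduces a single crossing $d$; smoothing at $d$ splits off a small unknot unlinked from the long component, so $[K^d_{sm}]=[K^0_{link}]$ and the new term $sgn(d)([K^d_{sm}]-[K^0_{link}])$ vanishes — this is exactly the role of the correction term in the unframed formula. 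Classical R2 introduces two crossings of opposite sign whose oriented smoothings are homotopic, so the two $[K^d_{sm}]$ terms and, in the unframed case, the two $[K^0_{link}]$ terms cancel in pairs, while the coefficient $\sum_d sgn(d)$ of $[K^0_{link}]$ is unchanged; classical R3 and the mixed move are handled, as in Proposition \ref{polyclosedframed}, by a bijection between the crossings before and after that preserves sign and the flat class of the oriented smoothing and leaves $\sum_d sgn(d)$ and $[K^0_{link}]$ alone. Purely virtual moves affect none of the data.

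Order one is the usual alternating-sign argument. For $K_{dd'}$ with double points $d,d'$, let $K_{ab}$ with $a,b\in\{+,-\}$ denote the four classical resolutions, so that $S(K_{dd'})=S(K_{++})-S(K_{+-})-S(K_{-+})+S(K_{--})$. A classical crossing $e\neq d,d'$ contributes $sgn(e)[K^e_{sm}]$ to each $S(K_{ab})$, and this flat class is independent of how $d,d'$ are resolved since the flat category forgets over/under information, so these four contributions enter with coefficients $+,-,-,+$ and cancel. The same holds for $[K^0_{link}]$, whose coefficient $\sum_d sgn(d)$ takes the values $\sigma+2,\sigma,\sigma,\sigma-2$ over the four resolutions, where $\sigma$ is the sum of the signs of the crossings other than $d,d'$, and $(\sigma+2)-\sigma-\sigma+(\sigma-2)=0$. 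Smoothing $d$ erases that crossing, and an oriented smoothing ignores the sign of a crossing, so the resulting flat class is again independent of the resolution of $d$; its four contributions are $+[A],-[A],+[A],-[A]$, the sign of $d$ in $K_{ab}$ times the outer sign, and they cancel, and symmetrically for $d'$. Hence $S(K_{dd'})=0$. Finally, the linear map $[L]\mapsto t^{|i(L)|}$ sends $S(K)$ to $p_t(K)$ (using $|i([K^0_{link}])|=0$), so on the singular long knot of Fig. \ref{ptnonzero}, where $p_t=2t^2-2\neq 0$ in both the framed and unframed cases, we must have $S\neq 0$; thus $S$ has order exactly one.

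I expect the main obstacle to be nothing deep: it is the bookkeeping of the correction term $[K^0_{link}]$, which has no analogue in the closed framed setting. One must confirm it is a well-defined group element and then, in every Reidemeister move and in the two-double-point computation, check that the correction terms cancel in lockstep with the main terms — precisely what the coefficient $\sum_d sgn(d)$ above is doing. Everything else is a line-for-line transcription of the arguments already given for the polynomial and smoothing invariants, with ``virtual link'' replaced by ``virtual tangle''.
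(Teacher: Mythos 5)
Your proposal is correct and follows the same route the paper intends: the paper's proof is simply ``analogous to the one for the polynomial invariant,'' and you have carried out exactly that template (framed/unframed R1 handled via the correction term $[K^0_{link}]$, the alternating-sign cancellation for two double points, and nonvanishing on the singular knot of Fig. \ref{ptnonzero}). Your observation that the specialization $[L]\mapsto t^{|i(L)|}$ carries $S$ to $p_t$, so nonvanishing of $p_t$ forces $S\neq 0$, is a clean way of phrasing ``the same example'' and is consistent with the paper.
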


\begin{proof}
The proof is analogous to the one for the polynomial invariant.
\end{proof}

\begin{prop}
The smoothing invariant is strictly stronger than the ordered polynomial invariant.
\end{prop}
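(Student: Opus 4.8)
The statement asserts two things: that $S(K)=S(K')$ forces $\overline{p}_t(K)=\overline{p}_t(K')$, and that there is a pair $K_1,K_2$ of long virtual knots with $\overline{p}_t(K_1)=\overline{p}_t(K_2)$ but $S(K_1)\neq S(K_2)$. The plan is to treat these two halves separately.

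For the ``weaker'' direction I would show that $\overline{p}_t$ factors through $S$. The key point is that for a long virtual knot every oriented smoothing has canonically ordered components --- one long, one closed --- so, by the Lemma following Definition \ref{def1long}, the signed intersection index $i(L)$ descends to a well-defined $\Z$-valued function on flat equivalence classes of two-component virtual tangles (with the long component taken as component $1$). Extend $[L]\mapsto t^{i(L)}-1$ in the unframed case, respectively $[L]\mapsto t^{i(L)}$ in the framed case, $\Z$-linearly to the free abelian group generated by such classes, and call the resulting homomorphism $\phi$. Since $[K^0_{link}]$ has $i=0$, a direct comparison of definitions gives $\phi(S(K))=\overline{p}_t(K)$ in both settings; hence equal $S$ implies equal $\overline{p}_t$.

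For strictness I would exhibit $K_1,K_2$ (in a figure) agreeing outside two classical crossings $a,b$, with $a$ positive and $b$ negative in $K_1$ and the two signs reversed in $K_2$, chosen so that the oriented smoothings at $a$ and at $b$ have the same \emph{signed} intersection index, $i(a)=i(b)$. Since an oriented smoothing ignores the sign of the crossing it resolves, $i(a)$ and $i(b)$ (and $i$ of every other crossing) are computed identically in $K_1$ and $K_2$, so the joint contribution of $a,b$ to $\overline{p}_t$ reads $t^{i(a)}-t^{i(b)}$ for $K_1$ and $-t^{i(a)}+t^{i(b)}$ for $K_2$ (up to the common $-1$ shift in the unframed case), while all remaining crossings contribute identically; thus $\overline{p}_t(K_1)=\overline{p}_t(K_2)$ both framed and unframed. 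On the other hand, as the smoothings are insensitive to the sign change, one gets $S(K_1)-S(K_2)=2\bigl([K^a_{sm}]-[K^b_{sm}]\bigr)$, the $[K^0_{link}]$ terms cancelling in the unframed case. It then remains to see that $[K^a_{sm}]\neq[K^b_{sm}]$ as flat two-component virtual tangles, which I would do exactly as in Proposition \ref{smoothingstronger}: apply the long analogue of the Goldman--Lie-type invariant $B$ (smooth at each crossing between the two components and record the signed sum of the resulting flat long virtual knots), then compose with $p_t(mod\,2)$, a homotopy invariant of long virtual knots by the corollary above; differing outputs in $\Z_2[t]$ force $[K^a_{sm}]\neq[K^b_{sm}]$ and hence $S(K_1)\neq S(K_2)$.

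The main obstacle is the explicit construction of the pair: one needs a diagram in which the two switched crossings have equal signed intersection index --- a stronger demand than the equality of absolute values available in the closed-knot proof, since $\overline{p}_t$ records the sign --- while still yielding flat smoothings that $B$ (or some other flat invariant of two-component tangles) can tell apart. A natural candidate is obtained by cutting the closed example of Fig. \ref{smoothingandpoly} at a suitable non-crossing point, chosen so that in both smoothings the long strand plays the role of component $1$; the $B$-computation from the closed case then transfers with only orientation bookkeeping, which is precisely what arranges $i(a)=i(b)$. Once such a diagram is fixed, the remaining verifications are routine.
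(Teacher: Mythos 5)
Your proposal follows essentially the same route as the paper: the easy direction rests on the fact that the canonically ordered (long component first) intersection index is recoverable from the flat class of each smoothing, so $\overline{p}_t$ factors through $S$, and strictness is shown by a pair differing only in two switched crossings of opposite sign with equal \emph{signed} index $i(a)=i(b)$, whose flat smoothings are then separated by the long analogue of $B$ composed with $p_t \,(mod\,2)$. The paper simply exhibits such a pair directly (Fig.~\ref{k1k2orderedsmoothing}, with $i(+)=i(-)=-2$ and with $p_t(mod\,2)(B(\cdot))$ taking the values $0$ and $t^2$ on the two smoothings), so the only piece your write-up leaves outstanding is fixing a concrete diagram and performing that finite computation, which is exactly the content the paper supplies by figure.
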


\begin{proof}
Note that, since the ordered polynomial invariant is stronger than the polynomial invariant, this result implies that the smoothing invariant is stronger than the polynomial invariant for long virtual knots as well.
The proof proceeds the exact same way as the one for the closed case, with the pair of knots in Fig. \ref{k1k2orderedsmoothing} being $K_1, K_2$.

\begin{figure}[!h]
\centering
\includegraphics[scale=.08]{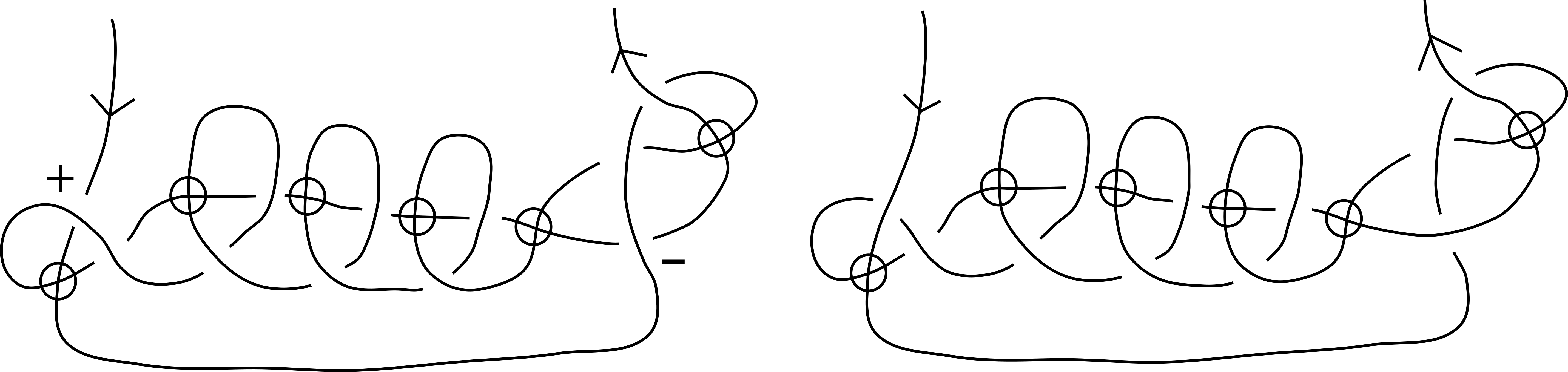}
\caption{The two knots, $K_1$ on the left and $K_2$ on the right, that show that the smoothing invariant is stronger than the ordered polynomial invariant.}
\label{k1k2orderedsmoothing}
\end{figure}

This pair of knots only differs at the crossings marked $+,-$ in $K_1$ which are, as in previous examples, switched in $K_2$.
We have, picking as usual the long component as component $1$, $i(+)=i(-)=-2$, and all the other crossings are the same, so that $\overline{p}_t(K_1)=\overline{p}_t(K_2)$.
On the other hand, computing $S(K_1)-S(K_2)$ yields $2([K_{1+}]-[K_{1-}])$, where $K_{1+}$ is the tangle obtained by smoothing $K_1$ at the crossing $+$, $K_{1-}$ is the tangle obtained by smoothing $K_1$ at the crossing $-$ and the square brackets represent, as usual, the flat class.
We then compute $B([K_{1+}])$ and $B[K_{1-}]$, and then apply $p_t(mod 2)$ to the result, like in the proof of Proposition \ref{smoothingstronger}.

The result of such computations gives $p_t(mod2)(B([K_{1+})])=0$, while $p_t(mod2)(B([K_{1-}]))=t^2$.
As before, this means $p_t(mod2)$ distinguishes $B([K_{1+}])$ and $B([K_{1-}])$, so $[K_{1+}]\neq[K_{1-}]$ and consequently $S(K_1)\neq S(K_2)$.
\end{proof}

\subsection{The glueing invariant}
\begin{definition}
Let $K$ be a virtual knot diagram.
Let $[K^d_{glue}]$ be the flat equivalence class of the singular knot obtained by glueing the crossing $d$ of $K$ into a double point.
Then \emph{the glueing invariant} $G(K)$ is given by the following formula (where, as before, the sum ranges over all classical crossings of $K$):

\begin{equation}G(K)=\sum_d sgn(d)[K^d_{glue}].\end{equation}
\end{definition}

\begin{theorem}
The glueing invariant is the universal order one Vassiliev invariant of framed virtual knots.
\end{theorem}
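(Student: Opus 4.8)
The plan is to follow Henrich's argument from \cite{2}, checking that each step survives the replacement of ordinary Reidemeister one by its framed version. There are two things to establish: (i) $G$ is a Vassiliev invariant of order one of framed virtual knots, with values in the free abelian group generated by the flat (homotopy) classes of singular framed virtual knots with one double point; and (ii) for every Vassiliev invariant $V$ of order $\leq 1$, every framed virtual knot $K$, and every $K_0$ in the homotopy class of $K$, one has $V(K)=V(K_0)+V^{(1)}\bigl(\tfrac{1}{2}(G(K)-G(K_0))\bigr)$, where $V^{(1)}$ has been extended $\Z$-linearly to the group in (i). For (i), invariance of $G$ under classical Reidemeister two and three and the mixed move is checked exactly as for $p_t$ in Proposition \ref{polyclosedframed}, via a sign- and flat-class-preserving bijection between the classical crossings before and after the move. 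For the framed Reidemeister one move, which inserts a pair of kinks of opposite sign, gluing one of the two kinks leaves the other as an ordinary crossing, so the two singular knots produced differ by a crossing change at the surviving kink and hence have the same flat class; since the two kinks carry opposite signs, their contributions to $G$ cancel. That $G$ has order $\leq 1$ follows from the alternating-sign cancellation already used for $p_t$: for a knot $K_{dd'}$ with double points $d,d'$, a classical crossing $e\neq d,d'$ contributes four glued flat classes that all agree (once $e$ is glued, changing the resolutions of $d$ and $d'$ is a flat equivalence) and so cancel in the pattern $+\,-\,-\,+$, while the terms coming from $d$ and $d'$ cancel because gluing does not see the sign. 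Finally $G$ is not of order $0$, since the local computation below gives $G(K_+)-G(K_-)=2[K_c]\neq 0$ at any crossing $c$.

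The key step is the following local computation. Suppose two framed virtual knot diagrams $K$ and $K'$ differ by a single crossing change at a classical crossing $c$, positive in $K$ and negative in $K'$, and write $K_c$ for the flat class of the singular knot obtained by gluing $c$. For a crossing $d\neq c$, gluing $d$ leaves $c$ an ordinary crossing, so $[K^d_{glue}]=[(K')^d_{glue}]$, a crossing change being a flat equivalence; since $sgn(d)$ is the same in $K$ and $K'$, these terms cancel in $G(K)-G(K')$. For $d=c$, gluing is insensitive to the sign, so $[K^c_{glue}]=[(K')^c_{glue}]=[K_c]$, and the $c$-contribution to $G(K)-G(K')$ is $(+1)[K_c]-(-1)[K_c]=2[K_c]$; hence $G(K)-G(K')=2[K_c]$. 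On the analytic side, $V(K)-V(K')=V^{(1)}(K_c)$ by the definition of the formal derivative. Because $V$ has order $\leq 1$ its second derivative vanishes, so $V^{(1)}$ is unchanged by crossing changes at ordinary crossings; being already an invariant of singular virtual knots, it therefore factors through flat classes and extends $\Z$-linearly to the target of $G$, giving $V^{(1)}\bigl(\tfrac{1}{2}(G(K)-G(K'))\bigr)=V^{(1)}(K_c)=V(K)-V(K')$.

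To finish I would argue by invariance along a deformation. Since $K_0$ lies in the homotopy class of $K$, there is a finite sequence of framed Reidemeister moves and crossing changes joining a diagram of $K$ to a diagram of $K_0$. Along this sequence consider $\Phi=V-V(K_0)-V^{(1)}\bigl(\tfrac{1}{2}(G-G(K_0))\bigr)$. Framed Reidemeister moves change neither $V$ nor $G$, hence fix $\Phi$; a crossing change alters $G-G(K_0)$ by a further term $\pm2[\,\cdot\,]$, so $\tfrac{1}{2}(G-G(K_0))$ stays an integral combination throughout, and by the local computation the jump of $V$ matches the jump of $V^{(1)}\bigl(\tfrac{1}{2}(G-G(K_0))\bigr)$, so again $\Phi$ is unchanged. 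As $\Phi=0$ at $K_0$, we conclude $\Phi(K)=0$, which is the universality formula. Thus $G$ is the universal order one Vassiliev invariant of framed virtual knots.

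I expect the main obstacle to be confirming that none of the framed-specific features spoil the argument. The delicate point is the framed Reidemeister one check for $G$: one must verify carefully that, after gluing one of the inserted pair of kinks, moving the diagram across the remaining kink is a legitimate flat move, so that the two glued singular knots are genuinely flat equivalent and their contributions cancel. One must likewise confirm that in the framed category rigid-vertex isotopy still forces $V^{(1)}$ to be locally constant on one-double-point singular framed virtual knots, which is what legitimizes the $\Z$-linear extension of $V^{(1)}$ used in the universality formula. In contrast to the based-matrix invariants studied later, no Whitney-trick indeterminacy enters here, because $G$ records the entire flat singular knot rather than a matrix extracted from it; once these points are secured, the rest is the bookkeeping sketched above.
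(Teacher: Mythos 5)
Your proposal is correct and takes essentially the same route as the paper: invariance and the order-one property are checked as for $p_t$, with the two kinks created by framed Reidemeister one cancelling because their glued flat classes coincide, and universality is obtained by telescoping over a sequence of crossing changes, using that $V^{(1)}$ is constant on flat classes of one-double-point knots and that each crossing change shifts $G$ by $2[K_c]$. The one imprecision is your phrase that the two glued singular knots ``differ by a crossing change at the surviving kink'' --- they in fact differ by which kink carries the double point --- and, exactly as you anticipate in your closing paragraph, the paper justifies their flat equivalence by sliding the little kink containing the double point along the knot using Reidemeister and rigid vertex isotopy moves.
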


\begin{proof}
The proof that it's an invariant or that it's a Vassiliev invariant of order one is similar to the previous ones.
In particular, invariance under framed Reidemeister one comes from the fact that the two crossings we introduce have opposite sign, and the corresponding two terms in $G(K)$ are homotopic, since we can move a little kink with a double point around the knot using Reidemeister moves and the rigid vertex isotopy moves.

Universality (as defined in section \ref{finitetypeinvariants}) comes from the same argument of \cite{2}, which is as follows.
Let $K, K_0$ two long knots in the same homotopy class.
By definition, they are related by a sequence of Reidemeister moves and CC moves.
Let $K_0, K_1, \ldots, K_m=K$ be a sequence of knots where $K_i$ differs from $K_{i-1}$ by the application of some Reidemeister moves and exactly one CC move, and let $V$ be any Vassiliev invariant of order one.
Then using a telescoping sum trick we have the following:

\begin{equation}
\begin{split}
V(K)-V(K_0) &= V(K_m)-V(K_{m-1})+V(K_{m-1})-\cdots-V(K_1)+V(K_1)-V(K_0)\\
&=\sum_{i=1}^mV(K_i)-V(K_{i-1})\\
&=\sum_{i=1}^m sgn(i)V^{(1)}([K_{i\bullet}])
\end{split}
\end{equation}
Here $K_{i\bullet}$ is obtained by glueing in $K_i$ the crossing that we switch going from $K_{i-1}$ to $K_i$.
We can take the flat class $[K_{i\bullet}]$ because, by definition of finite-type invariant of order one, $V^{(1)}$ is constant on homotopy classes of knots with one double point.
If resolving positively the double point we get $K_i$ set $sgn(i)=+1$, if resolving positively the double point we get $K_{i-1}$ set $sgn(i)=-1$.
Then 
\begin{equation}
\begin{split}
V(K)-V(K_0) &= \sum_{i=1}^m sgn(i)V^{(1)}([K_{i\bullet}])\\
&=V^{(1)}(\sum_{i=1}^msgn(i)[K_{i\bullet}])\\
&=V^{(1)}\bigl(\frac{1}{2}(G(K)-G(K_0))\bigr).
\end{split}
\end{equation}

\end{proof}

\begin{definition}
\label{glueinginvariant}
Let $K$ be a long virtual knot diagram.
Let $[K^d_{glue}]$ be the flat equivalence class of the singular knot obtained by glueing the crossing $d$ of $K$ into a double point.
Let $[K^0_{sing}]$ be the flat equivalence class of $K$ with an added double point, introduced via a Reidemeister one move and glueing the newly-created crossing as above.
Note that the flat equivalence class of $[K^0_{sing}]$ does not depend on where we introduce the kink, as we can move the double point around using the CC move and the moves in Fig. \ref{rigidvertexisotopy}.
Finally $G(K)$ is given by the following formula (where, as above, the sum ranges over all classical crossings of $K$):

\begin{equation}
G(K)=\begin{cases}
\sum_d sgn(d)([K^d_{glue}]-[K^0_{sing}])& \text{ in the unframed case}\\
\sum_d sgn(d)[K^d_{glue}]& \text{ in the framed case}
\end{cases}\end{equation}
\end{definition}

\begin{theorem}
The glueing invariant is the universal order one Vassiliev invariant of long virtual knots, both framed and unframed.
\end{theorem}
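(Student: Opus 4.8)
The plan is to mirror the proof of the closed-knot universality theorem almost verbatim, checking only that each step survives the passage to the long category and to both framings. First I would verify that $G(K)$ is a well-defined invariant and a Vassiliev invariant of order $\le 1$, exactly as in the earlier statements: invariance under classical and mixed Reidemeister moves is the usual one-to-one correspondence of glued crossings, and invariance under (ordinary) Reidemeister one in the unframed case is handled by the subtraction of the correction term $[K^0_{sing}]$, whose flat class is independent of the location of the kink because a small kink carrying a double point can be slid anywhere along the knot using the CC move and the rigid vertex isotopy moves of Fig.~\ref{rigidvertexisotopy}; in the framed case no correction term is needed since framed Reidemeister one introduces two crossings of opposite sign whose glued diagrams are flatly equivalent by sliding the little kink around. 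Order $\le 1$ follows from the same alternating-sign cancellation argument used for $p_t$ and $S$: for a knot with two double points the four resolutions enter with signs $+,-,-,+$, and since gluing a crossing does not see its sign the glued-crossing terms for each of the two double points appear twice with opposite signs and cancel, so $G$ vanishes on any long virtual knot with $\ge 2$ double points. The nonvanishing of $G^{(1)}$ on a one-double-point knot is inherited from the fact, already established, that $S$ (which $G$ dominates, see section~\ref{section5}) is nonzero there.

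The heart of the proof is universality, and here I would reproduce the telescoping argument from the closed case. Given two long virtual knots $K$ and $K_0$ in the same homotopy class, pick a sequence $K_0,K_1,\dots,K_m=K$ in which consecutive knots differ by some Reidemeister moves together with exactly one CC move. For any order-$\le 1$ Vassiliev invariant $V$, write
\begin{equation}
V(K)-V(K_0)=\sum_{i=1}^m\bigl(V(K_i)-V(K_{i-1})\bigr)=\sum_{i=1}^m sgn(i)\,V^{(1)}\bigl([K_{i\bullet}]\bigr),
\end{equation}
where $K_{i\bullet}$ is $K_i$ with the switched crossing glued into a double point, the flat class being legitimate because $V^{(1)}$ is constant on homotopy classes of long virtual knots with one double point, and $sgn(i)=\pm1$ according to which of $K_i,K_{i-1}$ is the positive resolution. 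Pulling $V^{(1)}$ out of the (linear) sum and recognizing $\sum_i sgn(i)[K_{i\bullet}]$ as $\tfrac12\bigl(G(K)-G(K_0)\bigr)$ — the factor $\tfrac12$ coming, as in \cite{2}, from the fact that each glued crossing is counted with both orientations — gives
\begin{equation}
V(K)-V(K_0)=V^{(1)}\Bigl(\tfrac12\bigl(G(K)-G(K_0)\bigr)\Bigr),
\end{equation}
which is exactly the universality formula.

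The one genuinely new point — and the step I expect to be the main obstacle — is the treatment of the correction term $[K^0_{sing}]$ in the unframed long case, because the telescoping identity as written involves the genuine glued-crossing classes $[K_{i\bullet}]$, whereas $G$ is built from $[K^d_{glue}]-[K^0_{sing}]$. I would resolve this by observing that in the difference $G(K)-G(K_0)$ the correction terms contribute $-\bigl(w(K)-w(K_0)\bigr)[K^0_{sing}]$ where $w$ is the sum of the signs of the classical crossings (the writhe), and that $w$ is itself a CC-move-sensitive quantity: switching one crossing changes $w$ by $\pm2$, matching the sign bookkeeping so that $\sum_i sgn(i)$ equals $\tfrac12(w(K)-w(K_0))$. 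Thus the $[K^0_{sing}]$ terms are precisely what is needed to make $\sum_i sgn(i)[K_{i\bullet}]$ equal $\tfrac12(G(K)-G(K_0))$ on the nose rather than up to a multiple of the ``base'' singular class — equivalently, one should check that $[K^0_{sing}]$ is exactly the value of $[K_{i\bullet}]$ when the switched crossing is a trivial kink, which holds by the location-independence noted above. Once this accounting is pinned down, the rest is the verbatim closed-case argument, and the framed case is strictly easier since there is no correction term at all. I would also remark, as in the closed case, that the argument shows $G$ is universal in the precise sense of section~\ref{finitetypeinvariants}: any $V$ of order $\le 1$ is recovered from $G$, its formal derivative $V^{(1)}$, and the values of $V$ on one representative per homotopy class.
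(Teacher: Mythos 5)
Your outline is the same as the paper's, which proves the long case by rerunning the closed-case telescoping argument verbatim; that route is correct, and you are right that the framed case needs no extra care.

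The one place where your accounting goes wrong is exactly the step you single out, the correction term in the unframed long case. You claim $\sum_i sgn(i)=\tfrac12\bigl(w(K)-w(K_0)\bigr)$ ``because switching one crossing changes $w$ by $\pm 2$'', but in the unframed category the Reidemeister one moves occurring along the homotopy also change the writhe (by $\pm 1$ each), so this identity fails in general, and with it the asserted on-the-nose equality $\sum_i sgn(i)[K_{i\bullet}]=\tfrac12\bigl(G(K)-G(K_0)\bigr)$. What is true is the following: since $G$ is invariant under all unframed Reidemeister moves, its value changes only at the CC moves, and the $i$-th switch changes the coefficient of the corresponding glued class by $2\,sgn(i)$, giving
\begin{equation*}
\tfrac12\bigl(G(K)-G(K_0)\bigr)=\sum_{i=1}^m sgn(i)\bigl([K_{i\bullet}]-[K^0_{sing}]\bigr),
\end{equation*}
which differs from your expression by $\bigl(\sum_i sgn(i)\bigr)[K^0_{sing}]$, a term that need not vanish in the free abelian group. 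The theorem survives because this discrepancy dies under $V^{(1)}$: the two resolutions of the glued kink in $K^0_{sing}$ are each isotopic, via unframed Reidemeister one, to the same underlying knot, so $V^{(1)}([K^0_{sing}])=0$, and applying $V^{(1)}$ to the displayed identity yields $V(K)-V(K_0)=V^{(1)}\bigl(\tfrac12(G(K)-G(K_0))\bigr)$ as required. With that one-line repair your argument is complete and coincides with the paper's.
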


\begin{proof}
The proof is analogous to the one for the closed case.
\end{proof}

\begin{theorem}\label{thm1}
The glueing invariant is stronger than the smoothing invariant, for framed/unframed, long/closed virtual knots.
\end{theorem}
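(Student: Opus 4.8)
The plan is to verify the two halves of the definition of ``stronger'': first that $S$ is recoverable from $G$, and then that there is a pair of knots distinguished by $G$ but not by $S$.

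\emph{First step: $S$ factors through $G$.} I would build a group homomorphism $\phi$ from the target of $G$ (the free abelian group on homotopy classes of one-double-point singular virtual knots, in the relevant category) to the target of $S$ (the free abelian group on homotopy classes of two-component virtual links, resp.\ tangles) by sending a flat singular knot to the flat class of the oriented smoothing of its double point. The only thing to check is that $\phi$ respects flat equivalence: Reidemeister and flat Reidemeister moves away from the double point go to moves of the same kind; the rigid vertex isotopy moves of Fig.\ \ref{rigidvertexisotopy} become ordinary (flat or virtual) Reidemeister moves once the double point has been smoothed into two arcs; and the CC move never acts at the double point, hence is invisible after smoothing. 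Now $\phi([K^d_{glue}])=[K^d_{sm}]$, since smoothing the double point obtained by glueing a crossing reproduces the oriented smoothing of that crossing, and in the long case $\phi([K^0_{sing}])=[K^0_{link}]$, because the oriented smoothing of the double point on a small kink splits off an unknotted, unlinked circle. Comparing the defining formulas term by term then gives $\phi(G(K))=S(K)$ in all four categories (framed/unframed, long/closed); in particular $G(K)=G(K')$ forces $S(K)=S(K')$.

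\emph{Second step: a distinguishing pair.} I would follow the template of the proof of Proposition \ref{smoothingstronger}: take $K_1$ and $K_2$ differing only by switching two crossings, labelled $+$ and $-$ in $K_1$, chosen so that (i) the oriented smoothings at the two sites are flat-equivalent, $[K^{+}_{sm}]=[K^{-}_{sm}]$, while (ii) the singular knots obtained by glueing at the two sites are \emph{not} flat-equivalent, $[K^{+}_{glue}]\neq[K^{-}_{glue}]$. Since a crossing switch leaves the underlying flat diagram unchanged, each flat smoothing and each flat glued singular knot depends only on its site, so $S(K_1)-S(K_2)=2\bigl([K^{+}_{sm}]-[K^{-}_{sm}]\bigr)=0$ by (i) — the correction terms $[K^0_{link}]$ in the unframed long case agree for $K_1$ and $K_2$ and cancel — whereas $G(K_1)-G(K_2)=2\bigl([K^{+}_{glue}]-[K^{-}_{glue}]\bigr)\neq 0$ by (ii). Such configurations exist because a one-double-point flat singular knot remembers the location of the double point relative to the rest of the diagram, data that is destroyed upon passing to the smoothed two-component link. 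Concretely I would present the closed (framed and unframed) example as a small modification of Fig.\ \ref{smoothingandpoly}, obtain the long examples by cutting these open along an arc disjoint from the two crossings, and note that the framed cases need nothing extra, as framed Reidemeister one only introduces self-crossings and no double points.

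The main obstacle is condition (ii): genuinely proving that the two flat singular knots are inequivalent, which by construction cannot be detected by their smoothings. I would detect the difference with an invariant of singular virtual strings — concretely, the singular based matrix developed in Section \ref{section4} — which is precisely the strictness statement carried out in Section \ref{section5}; alternatively one composes the two-component-link invariant $B$ and $p_t\,(\mathrm{mod}\,2)$ with auxiliary smoothings exactly as in the proof of Proposition \ref{smoothingstronger}. Everything else — well-definedness of $\phi$, the sign bookkeeping, and the order-one checks — is routine and parallel to the earlier proofs in this section.
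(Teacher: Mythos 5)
Your first step is sound but takes a different route from the paper: the paper simply invokes universality of $G$ (any order-one invariant, in particular $S$, is recovered from $G$ via the formula of section \ref{finitetypeinvariants}), whereas you construct an explicit homomorphism $\phi$ with $\phi([K^d_{glue}])=[K^d_{sm}]$ and $\phi([K^0_{sing}])=[K^0_{link}]$, so that $S=\phi\circ G$ on the nose. That is a legitimate and arguably cleaner argument (it gives the recovery of $S$ without reference to a base point in the homotopy class), and your well-definedness check --- rigid vertex isotopy moves becoming ordinary flat moves after smoothing the double point --- is the right one.

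The gap is in the second step. The content of the theorem, as the paper treats it, is the actual production and verification of the distinguishing pairs in each category, and you do not produce them. Two specific problems. First, starting from ``a small modification of Fig.~\ref{smoothingandpoly}'' is the wrong template: those knots are \emph{distinguished} by $S$ (that was the point of Proposition \ref{smoothingstronger}), so condition (i) fails for them; the paper instead modifies Henrich's $G$-versus-$S$ examples, yielding the pairs of Figs.~\ref{K1andK2}, \ref{unframedk1k2} and \ref{framedk1k2}. Second, your claim that ``the framed cases need nothing extra'' is false: the flat equivalence $[K^{+}_{sm}]=[K^{-}_{sm}]$ in the unframed example is achieved using classical Reidemeister one, which is forbidden in the framed category, and this is precisely why the paper inserts an extra kink in a specific spot in the framed closed example (and works with the framed elementary extensions $\hat{M}_1,\hat{M}_2,\hat{M}_w$ when checking primitivity of the singular based matrices). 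Finally, deferring condition (ii) to ``the singular based matrices of Section \ref{section5}'' is circular as written, since Section \ref{section5} \emph{is} the proof of this theorem; to complete the argument you must exhibit the two glued flat classes, compute their SBMs (in the framed case, $6\times 6$ matrices with an added annihilating row), and verify they are primitive and non-homologous, noting in particular that the singularity switch $N$ cannot interchange the relevant rows because they do not sum to the $s$ row.
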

Universality of $G$ implies that, if $G(K_1)=G(K_2)$, we also have $S(K_1)=S(K_2)$.
To show that $G$ stonger than $S$ we still need to exhibit a pair of long virtual knots such that $S(K_1)=S(K_2)$ but $G(K_1)\neq G(K_2)$, and prove these two statements.
To do so we will need to extend the theory of based matrices for virtual strings and singular virtual strings to the long case.
We will prove Theorem \ref{thm1} in section \ref{universality}.

\section{Virtual Strings and Based Matrices}
\label{section4}

The goal of this section is to introduce and extend the notion of a based matrix of a virtual string.
This invariant of virtual strings (i.e. flat virtual knots) was first introduced by Turaev \cite{5}, and later generalized by Henrich \cite{2} for flat virtual knots with one double point.
We will extend the invariants to the categories of framed, closed virtual knots and of long virtual knots, both framed and unframed.
We will then use these invariants to prove Theorem \ref{thm1} in all the cases we considered.

\subsection{The based matrix of a virtual string}
\label{turaev}

Before generalizing the notion of based matrix of a virtual string to the framed case, we will recall the construction as it was defined in \cite{5} and \cite{2}.

\begin{definition}
For an integer $m>0$, a \emph{virtual string} $\alpha$ of rank $m$ is an oriented circle $S$, called the core circle of $\alpha$, and a distinguished set of $2m$ distinct points on the circle partitioned in ordered pairs. 
We call these pairs the arrows of $\alpha$; the collection of all arrows is denoted $arr(\alpha)$. 
The endpoints of an arrow $(a,b)\in arr(\alpha)$ are called respectively the tail ($a$) and the head ($b$) of the arrow. 
Finally, two virtual strings are homeomorphic if there is an orientation-preserving homeomorphism of the core circles sending the arrows of the first string to the arrows of the second. We only consider virtual strings up to homeomorphism.
 \end{definition}

Drawing the flat virtual knot diagram associated  to the virtual string is the same as drawing the knot associated to a Gauss diagram. 
More rigorously, every crossing of a flat virtual knot diagram corresponds to an arrow in the associated virtual string as illustrated by Fig. 18.

\begin{figure}[!h]
\centering
\includegraphics[scale=.2]{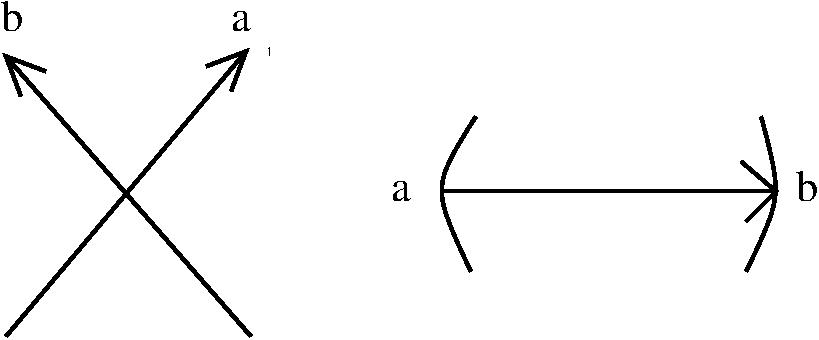}
\caption{A crossing and the corresponding arrow in its virtual string.}
\end{figure}

\begin{definition}\label{def2}
A \emph{based matrix} is a triple $(G, s, b\colon G\times G\to H)$, where $H$ is an abelian group, $G$ is a finite set, $s\in G$ and the map $b$ is skew-symmetric.
\begin{enumerate}
\item We call $g\in G\setminus \{s\}$ annihilating if $b(g,h)=0$ for all $h\in G$;
\item We call $g\in G\setminus \{s\}$ a core element if $b(g,h)=b(s,h)$ for all $h\in G$;
\item We call $g_1,g_2\in G\setminus \{s\}$ complementary if $b(g_1,h)+b(g_2,h)=b(s,h)$ for all $h\in G$.
\end{enumerate}
\end{definition}

\noindent Clearly the term ``based matrix'' comes from the matrix representation of $b$.
We always choose the core element $s$ to be the first element in the basis of the matrix representation; the first row will then show the values $b(s,e)$.
We can use the three element types to define \emph{elementary extensions} of a based matrix:

\begin{itemize}
\item $M_1$ changes $(G,s,b)$ into $(G\coprod \{g\}, s, b_1)$ such that $b_1$ extends $b$ and $g$ is annihilating;
\item $M_2$ changes $(G,s,b)$ into $(G\coprod \{g\}, s, b_2)$ such that $b_2$ extends $b$ and $g$ is a core element;
\item $M_3$ changes $(G,s,b)$ into $(G\coprod \{g_1,g_2\}, s, b_3)$ such that $b_3$ is any skew-symmetric map extending $b$ in which $g_1, g_2$ are complementary.
\end{itemize}

\begin{definition}\label{BMiso}
A based matrix is called \emph{primitive} if it cannot be obtained from another matrix by elementary extensions.
Two based matrices $(G,s,b)$ and $(G', s', b')$ are \emph{isomorphic} if there is a bijection $G\to G'$ mapping $s\mapsto s'$ and transforming $b$ into $b'$.
Two based matrices are \emph{homologous} if they can be obtained from each other by a finite number of elementary extensions and their inverses. 
\end{definition}

\begin{lemma}[\cite{5}]
Every based matrix is obtained from a primitive based matrix by elementary extensions.
Two homologous primitive based matrices are isomorphic.
\end{lemma}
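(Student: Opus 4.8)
The plan is to prove the two assertions separately, following Turaev's original strategy in \cite{5} but recording each step so it can be reused for the framed and long variants later in the paper. First I would address existence: given an arbitrary based matrix $(G,s,b)$, I want to strip away all the ``redundant'' elements one at a time. Concretely, I would show that if $g\in G\setminus\{s\}$ is annihilating, a core element, or a member of a complementary pair, then $(G,s,b)$ is an elementary extension of the based matrix obtained by deleting $g$ (respectively $g$ together with its partner), i.e. of the restriction of $b$ to the smaller set with the same distinguished element $s$. Iterating this deletion on a finite set must terminate, and when it does we are left with a based matrix containing no annihilating elements, no core elements, and no complementary pairs — which is exactly the definition of primitive. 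This yields the first sentence.

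For the second sentence, suppose $(G_1,s_1,b_1)$ and $(G_2,s_2,b_2)$ are primitive and homologous. By definition they are connected by a zig-zag of elementary extensions and their inverses; the key reduction is a \emph{confluence} (diamond) argument: any two based matrices related by a zig-zag have a common elementary extension, so it suffices to show that if a primitive based matrix $(G,s,b)$ is an iterated elementary extension of another primitive based matrix $(G',s',b')$, then the extension is trivial and the two are isomorphic. To do this I would take a single elementary extension $(G',s',b')\to (G'\amalg\{g\},s',\tilde b)$ (or $M_3$, adding $\{g_1,g_2\}$) sitting at the bottom of the chain, and argue that the newly added element $g$ stays annihilating / a core element / complementary, or becomes one, no matter what further extensions are performed on top of it — because the defining relations $b(g,h)=0$, $b(g,h)=b(s,h)$, or $b(g_1,h)+b(g_2,h)=b(s,h)$ only constrain $b$ on the \emph{old} elements $h$, and are preserved when we extend $b$ to still-newer elements (one checks the handful of interaction cases: a new annihilating/core element evaluated against $g$, etc., using skew-symmetry). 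Hence the top matrix cannot be primitive unless no extension ever occurred, forcing $G'\amalg\{g\}=G'$, a contradiction; so a primitive matrix has no nontrivial primitive ``ancestors'' under extension, and two homologous primitive matrices must in fact be directly isomorphic via the bijection matching up their (unique) underlying sets and distinguished elements.

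The main obstacle is the bookkeeping in the confluence step: I expect the delicate point to be verifying that the type of a freshly-added element is \emph{stable} under all three kinds of subsequent elementary extension, since an $M_3$ extension introduces a free choice of how to extend $b$, and one must check that this freedom never destroys the annihilating/core/complementary property of the earlier element — equivalently, that the relevant relations involve only pairings $b(g,h)$ with $h$ older than $g$, together with the automatic relation $b(g,g)=0$. Once that stability lemma is in hand, the termination argument for existence and the diamond argument for uniqueness are both short. I would cite \cite{5} for the precise combinatorial lemmas rather than reproduce every case, since this lemma is quoted from Turaev and only its statement is needed for what follows.
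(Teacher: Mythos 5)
The existence half of your proposal is fine and coincides with what the paper does for its framed analogue (Lemma \ref{lemma1}): apply inverse elementary extensions repeatedly, note that $|G|$ strictly decreases so the process terminates, and observe that a matrix admitting no inverse extension has no annihilating elements, no core elements and no complementary pairs, hence is primitive. (For the unframed lemma itself the paper offers no proof at all and simply cites Turaev, so the framed proof and Turaev's Lemma 6.2.1 are the right benchmarks.)

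The uniqueness half has a genuine gap, and it is in the reduction, not the bookkeeping you flag. From a common extension $C$ of the two primitive matrices you pass to the statement ``if a primitive based matrix is an iterated elementary extension of another primitive based matrix, then the extension is trivial.'' That statement is essentially vacuous --- by the definition of primitivity (Definition \ref{BMiso}) a primitive matrix cannot be obtained from any matrix by a nontrivial extension --- and it does not imply the lemma: neither of your two primitive matrices is an extension of the other; both sit below $C$, and knowing that a nontrivial extension fails to be primitive says nothing about whether the two matrices underneath $C$ are isomorphic. What you actually need, once $C$ is built, is uniqueness of the primitive reduct of $C$: every maximal chain of inverse extensions starting at $C$ terminates in isomorphic primitive matrices. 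That is a local-confluence statement for the deletion moves, and proving it requires precisely the case analysis of interacting moves with overlapping supports (two complementary pairs sharing an element, an element that is simultaneously annihilating and core because $b(s,\cdot)=0$, an $M_3$ pair one of whose members is annihilating, etc.). This is the content of Turaev's proof and of the paper's Claim \ref{claim1} in the framed setting, where a peak $\hat M_i$ followed by $\hat M_j^{-1}$ is rewritten as an isomorphism, a single move, or a valley, so the whole zig-zag is brought to the normal form ``inverse extensions first, extensions last'' and primitivity of the two endpoints annihilates it. Your stability lemma (a newly added element keeps its type under later extensions) only feeds the vacuous statement above and cannot replace that interaction analysis; so as written the second assertion is not proved. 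Citing \cite{5} outright, as the paper does, is legitimate --- but then the confluence sketch should be presented as a pointer, not as a proof.
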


This lemma is useful because it lets us extend an isomorphism invariant of based matrices to a homology invariant.
We do so by taking a primitive based matrix in the homology class and evaluating the invariant on it; any other primitive based matrix in the same homology class is isomorphic to it, so the invariant is well-defined.
A simple invariant of the sort is the cardinality of the finite set $G$.

To associate a based matrix to a virtual string, suppose you have the virtual string $\alpha$.
Let $G=G(\alpha)$ be the set $\{s\}\coprod arr(\alpha)$.
Then we can define a skew-symmetric map $b=b(\alpha)\colon G\times G\to \Z$ using intersection indices of some curves obtained from $\alpha$.
To compute $b(e,s)$ for $e\in arr(\alpha)$ we smooth the double point corresponding to $e$ and compute the intersection index $i(e)$ (see definition \ref{def1}) of the right-hand curve obtained from the smoothing with the left-hand curve.

\begin{figure}[h]
\centering
\includegraphics[scale=.3]{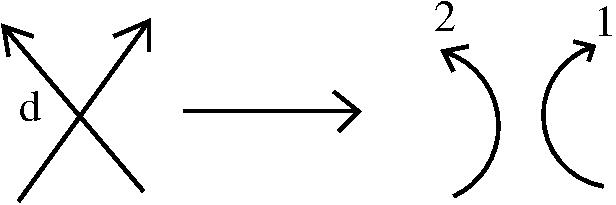}
\caption{Computing $b(d,s)$}
\end{figure}

If $e,f\neq s$, let $e=(a,b), f=(c,d)$ and let $(xy)^\circ$ denote the interior of the arc $xy$.
Count the number of arrows with tails in $(ab)^\circ$ and heads in $(cd)^\circ$ minus the number of arrows with heads in $(ab)^\circ$ and tails in $(cd)^\circ$.
Call this integer $ab\cdot cd$; then $b(e,f)=ab\cdot cd+\epsilon$, where $\epsilon\in\{0,\pm1\}$ according to the rule in Fig. \ref{linkingarrows}.

\begin{figure}[h]
\centering
\includegraphics[scale=.25]{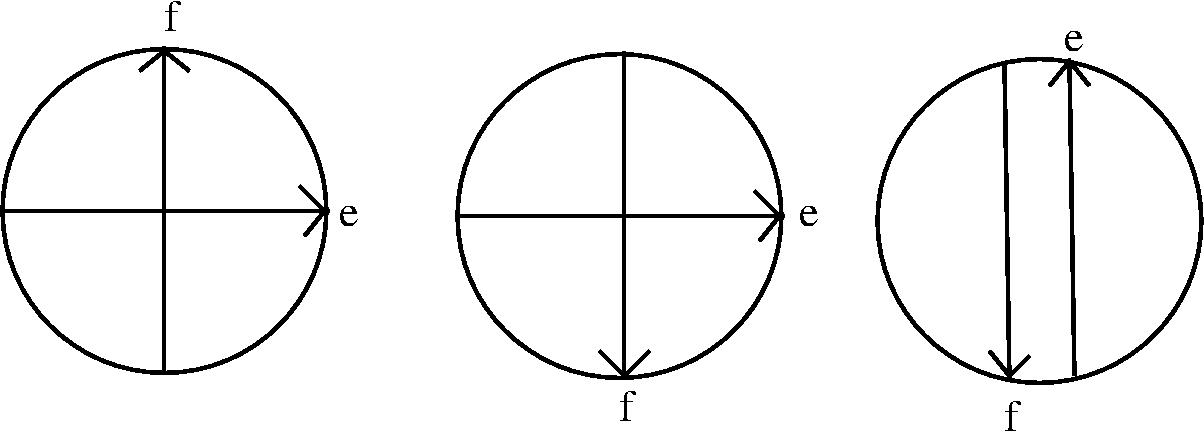}
\caption{In computing $b(e,f)$, $f$ links $e$ positively ($\epsilon=1$), $f$ links $e$ negatively ($\epsilon=-1$) and $e,f$ are unlinked ($\epsilon=0$).}
\label{linkingarrows}
\end{figure}

\noindent Then $(G(\alpha), s, b(\alpha))$ is a based matrix corresponding to the virtual string $\alpha$.

\begin{theorem}
If two virtual strings are homotopic, their based matrices are homologous.
\end{theorem}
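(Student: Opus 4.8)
The plan is to verify that homologous based matrices arise from homotopic virtual strings by checking invariance under each of the flat Reidemeister moves, showing that each move either leaves the based matrix unchanged up to isomorphism or changes it by an elementary extension $M_1$, $M_2$, $M_3$ (or the inverse of one). Since homotopy of virtual strings is generated by the flat Reidemeister moves (those involving only virtual crossings, plus the moves of Fig. \ref{flatreidemeistermoves}), and by Turaev's lemma it suffices to prove each generating move produces a homologous based matrix, this will establish the theorem. Moves involving only virtual crossings do not alter $arr(\alpha)$ or any of the arc/arrow incidences used to define $b$, so they change the based matrix not at all (or only by a homeomorphism of the core circle, hence an isomorphism).

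The substance is in the three flat moves of Fig. \ref{flatreidemeistermoves}. First I would treat flat Reidemeister one, which adds a single arrow $g=(a,b)$ whose head and tail are adjacent on the core circle with no other endpoints between them; one checks directly from the definitions that smoothing at $g$ yields a trivial two-component split, so $b(g,s)=0$, and that for every other arrow $f$ one has $ab\cdot cd=0$ and $\epsilon=0$, so $b(g,f)=0$ for all $f$. Thus $g$ is annihilating and the new based matrix is the $M_1$-extension of the old one. Next, flat Reidemeister two introduces two new arrows $g_1,g_2$; the key computation is that the small arc created between the two new crossings, together with the local picture, forces $b(g_1,h)+b(g_2,h)=b(s,h)$ for all $h$ (the two arrows are complementary, reflecting that one can be pushed "around" the core circle past the other). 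This is exactly an $M_3$-extension. Flat Reidemeister three permutes three crossings without creating or destroying arrows; here I would check that the arc-incidence counts $ab\cdot cd$ and the linking signs $\epsilon$ are reshuffled in a way that leaves the matrix $b$ invariant up to relabeling, i.e. the based matrices before and after are isomorphic. (One must be careful to run through the handful of orientation variants of each move, but each is a short local verification.)

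The main obstacle I anticipate is the flat Reidemeister two move and the correct bookkeeping of the correction term $\epsilon$ from Fig. \ref{linkingarrows}: the sign $\epsilon(e,f)$ depends on how the endpoints of $e$ and $f$ interleave on the core circle, and when two new arrows are created by an R2 move their mutual linking, as well as their linking with every pre-existing arrow, must be tracked simultaneously with the arc-count term $ab\cdot cd$ so that the net effect is precisely complementarity. It is easy to get a sign wrong here, and the two arrows created by R2 are genuinely "linked" with each other in the sense of Fig. \ref{linkingarrows}, which is what makes the extension an $M_3$ (with the specific off-diagonal entry $b(g_1,g_2)$ determined by the local picture) rather than two separate $M_2$'s. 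I would organize this by first recording how each of the three quantities $i(\cdot)$, $ab\cdot cd$, and $\epsilon$ transforms under an isotopy of the core circle that slides one endpoint past another, and then reading off the R1/R2/R3 cases as compositions of such elementary slides together with endpoint creation/annihilation. Everything else — the virtual moves, and the isomorphism claim for R3 — is routine once this slide calculus is in place.
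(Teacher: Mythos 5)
Your overall strategy is exactly the intended one: the paper does not reprove this statement (it is recalled from Turaev \cite{5}), and both Turaev's proof and the paper's framed analogue proceed precisely by checking that each flat Reidemeister move changes the based matrix by an elementary extension or an isomorphism. Your treatment of flat R2 (complementary pair, hence $M_3$) and flat R3 (isomorphism), and the observation that purely virtual and mixed moves do not touch the string data, are all in line with that argument.

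However, your handling of flat Reidemeister one contains a genuine error. A flat R1 move comes in two curl types, and only one of them produces an annihilating arrow. If the new arrow is $g=(a,b)$ and the empty arc is the one from the tail $a$ to the head $b$, then indeed $ab\cdot cd=0$, $\epsilon=0$ for every $f$, and $g$ is annihilating, giving an $M_1$-extension. But for the other curl, the empty arc runs from the head $b$ to the tail $a$, so \emph{all} other arrow endpoints lie in $(ab)^\circ$; in that case $b(g,f)=b(s,f)$ for every $f$ (generically nonzero), i.e.\ $g$ is a \emph{core} element and the move is an $M_2$-extension, not $M_1$. Your blanket claim that ``the new based matrix is the $M_1$-extension of the old one'' therefore fails for half the R1 configurations, and your closing parenthesis about orientation variants does not repair it, since you assert the uniform conclusion. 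This is not a cosmetic point: if R1 only ever produced $M_1$, the move $M_2$ would never be needed, whereas the whole design of the homology relation (and the paper's own remark in Section \ref{framedmatricessection} that ``Reidemeister move one gave us both the annihilating and the core elements,'' together with Fig.\ \ref{elementaryextensionsasreidemeister}) rests on R1 generating both $M_1$ and $M_2$. The fix is straightforward — split the R1 case according to which of the two arcs bounded by the new arrow is empty and verify $b(g,\cdot)\equiv 0$ in one case and $b(g,\cdot)\equiv b(s,\cdot)$ in the other — but as written the step would not go through.
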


\begin{definition}
A singular based matrix (SBM) is a quadruple $(G,s,d,b\colon G\times G\to H)$ where $G$ is finite, $s,d\in G$ and $b$ is skew-symmetric.
\end{definition}

The element types as in definition \ref{def2} change is the following way:

\begin{enumerate}
\item We call $g\in G\setminus \{s,d\}$ annihilating if $b(g,h)=0$ for all $h\in G$;
\item We call $g\in G\setminus \{s,d\}$ a core element if $b(g,h)=b(s,h)$ for all $h\in G$;
\item We call $g_1,g_2\in G\setminus \{s,d\}$ complementary if $b(g_1,h)+b(g_2,h)=b(s,h)$ for all $h\in G$.
\item We call $g\in\{s,d\}$ annihilating-like if $b(g,h)=0$ for all $h\in G$.
\item We say $d$ is core-like if $b(s,h)=b(d,h)$ for all $h\in G$.
\end{enumerate}

We define elementary extensions of SBM exactly as in the non-singular case.
There is a fourth elementary operation, called a \emph{singularity switch}:
\begin{itemize}
\item Suppose that there is a $g\in G$ such that $b(g,h)+b(d,h)=b(s,h)$ for all $h\in G$.
Then $N$ changes $(G,s,d,b)$ into $(G,s,g,b)$.
\end{itemize}

\begin{definition}\label{SBMiso}
Two SBMs  $(G,s,d,b)$ and $(G', s',d', b')$ are \emph{isomorphic} if there is a bijection $G\to G'$ such that $s\mapsto s'$, $d\mapsto d'$ and $b$ becomes $b'$.
A SBM is \emph{primitive} if it cannot be obtained from another SBM by elementary extensions, even after applying the singularity switch operation.
Two SBMs are \emph{homologous} if they can be obtained from each other by applying a finite number of elementary extensions, their inverses and singularity switches.
\end{definition}

One of the differences of the theory of SBMs is that every homology class either has a unique primitive SBM, or there are a pair of primitive SBMs that differ by the choice of the $d$ element.

To associate a SBM to any singular flat virtual knot with one double point, we compute the based matrix as if the double point was a regular flat crossing, and identify the distinguished element $d$ with the double point.
In the matrix representation of $b$ we always write $d$ in the last row/column, similarly to how $s$ is always the first row/column.
Then, at a diagram level, the reason why there could be a pair of primitive SBMs in a single homology class is that the double point might be a ``Reidemeister one''-type kink or might be in a ``Reidemeister two''-type pair.

\begin{theorem}
If two singular virtual strings are homotopic, their corresponding SBM are homologous.
\end{theorem}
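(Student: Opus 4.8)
The plan is to verify that the homology class of the singular based matrix is unchanged under each of the moves generating homotopy of singular virtual strings. Two singular virtual strings are homotopic precisely when their diagrams are related by a finite sequence of flat virtual Reidemeister moves performed away from the double point, together with the (flat) rigid vertex isotopy moves of Fig.~\ref{rigidvertexisotopy}, which slide a strand past the double point and carry the double point around the diagram. It therefore suffices to examine, move by move, how the quadruple $(G,s,d,b)$ — equivalently the matrix of $b$ with $s$ in the first row/column and $d$ in the last — changes, and to check that each move replaces it by an elementary extension of itself, by the inverse of one, by a singularity switch $N$, or by an isomorphic SBM. A telescoping argument over the sequence of moves then gives the theorem.

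First I would dispose of the moves not involving the double point. Here the element $d$ plays no distinguished role: its row and column in the matrix of $b$ transform exactly as the row and column of an ordinary arrow element do. Consequently the computation is word-for-word Turaev's argument for ordinary virtual strings (as recalled above and in \cite{5}): a flat Reidemeister one move adds or removes an arrow that is annihilating or a core element, hence an $M_1$ or $M_2$ extension; a flat Reidemeister two move adds or removes a complementary pair of arrows, hence an $M_3$ extension; and flat Reidemeister three, together with the mixed and purely virtual moves, leaves $G$, $s$, $d$ and $b$ unchanged up to isomorphism, since it only permutes the positions of arrow endpoints on the core circle without altering any intersection or linking count. One must of course check that the arrows created by $M_1$, $M_2$, $M_3$ still satisfy their defining conditions once the column indexed by $d$ is accounted for, but this is literally the same verification as for the $s$ column.

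The substantive part is the rigid vertex isotopy moves. For the moves that merely slide an arrow endpoint, or an entire strand, past an endpoint of the arrow $d$, I would compute the change in the affected entries $b(e,d)$ and $b(e,f)$ directly: each such passage changes one of the arc-interior counts $ab\cdot cd$ by $\pm1$ while simultaneously changing the correction term $\epsilon$ of Fig.~\ref{linkingarrows} by $\mp1$, so the net entry — and hence the whole SBM — is unchanged up to isomorphism, exactly as in the non-singular case. The genuinely new phenomenon is the configuration in which the double point lies in a ``Reidemeister two''-type bigon: the same pair of strands that form the double point $d$ cross again at an ordinary crossing $g$, and the rigid vertex move that pushes the double point through this bigon interchanges the roles of $d$ and $g$. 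Here I would show that the element $g$ satisfies $b(g,h)+b(d,h)=b(s,h)$ for all $h\in G$, i.e. $g$ and $d$ are complementary relative to $s$ in the sense required for the operation $N$, so that the move on diagrams is realized by a single singularity switch on singular based matrices.

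The main obstacle I anticipate is exactly this last verification: establishing the identity $b(g,h)+b(d,h)=b(s,h)$ from the bigon configuration, and conversely checking that $N$ is needed \emph{only} in this case, so that no further relations among SBMs are forced. This requires a careful accounting of how the arc interiors $(ab)^\circ$ and $(cd)^\circ$ of the two crossings cut the core circle, of the behaviour of arrows with endpoints in these arcs, and of the linking corrections $\epsilon$ — essentially the analogue of Turaev's bigon computation but keeping track of the distinguished element. It dovetails with the companion observation (already invoked in defining $[K^0_{sing}]$ and in the remark that a homology class has one or two primitive SBMs) that a double point coming from a Reidemeister-one kink is core-like, which pins down the two ways the double point can interact with the rest of the diagram. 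Once these geometric lemmas are in hand, the move-by-move check assembles into the desired statement.
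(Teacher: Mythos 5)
Your proposal is correct and follows essentially the same route as the paper (and Henrich's original argument): moves not involving the double point are handled exactly as in Turaev's non-singular case, and the only genuinely new move --- the exchange of the double point with the other crossing of a ``Reidemeister two''-type bigon --- is realized on singular based matrices by the singularity switch $N$, via the complementarity identity $b(g,h)+b(d,h)=b(s,h)$. The one cosmetic slip is attributing that bigon exchange to rigid vertex isotopy, whereas it is the separately postulated move (s-ii) in the definition of homotopy of singular virtual strings; since you check precisely this move and identify it with $N$, the substance of your argument coincides with the paper's.
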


\subsection{The framed case}
\label{framedmatricessection}
With the previous section in mind, let's generalize the SBM construction to the framed flat virtual knot case.
The first step is noticing that Reidemeister move one gave us both the annihilating and the core elements in the based matrix definition.
Thus we need to replace both of those definition by something suitable in the framed case.

\begin{definition}\label{elemmovesframed}
The elementary extensions for based matrices of framed virtual strings are the following.
\begin{itemize}
\item $\hat{M}_1$ changes $(G,s,b)$ into $(G\coprod \{g_1,g_2\}, s, b_1)$ such that $b_1$ extends $b$ and $g_1, g_2$ are annihilating;
\item $\hat{M}_2$ changes $(G,s,b)$ into $(G\coprod \{g_1,g_2\}, s, b_2)$ such that $b_2$ extends $b$ and $g_1, g_2$ are core elements;
\item $\hat{M}_3$ changes $(G,s,b)$ into $(G\coprod \{g_1,g_2\}, s, b_3)$ such that $b_3$ is any skew-symmetric map extending $b$ in which $g_1, g_2$ are complementary.
\item $\hat{M}_w$ changes $(G,s,b)$ into $(G,s,b')$ by acting on an annihilating element $g$ and changing it into a core element.
\end{itemize}
\end{definition}

\begin{remark}
A couple of things to note about the above definition:
\begin{itemize}
\item $\hat{M}_3$ is unchanged from the unframed case. 
We simply renamed it for consistency of notation. 
\item The move $\hat{M}_w$ is inspired by the Whitney trick, as in Fig. \ref{figure3}.
It's easy to see that $\hat{M}_w\sim\hat{M}_1^{-1}\circ \hat{M}_3$, and that $\hat{M}_w^{-1}\sim \hat{M}_2^{-1}\circ \hat{M}_3$, where we pick $g_1,g_2$ in $\hat{M}_3$ to be a pair \{annihilating, core\}. 
This $\hat{M}_3$ move is how the Whitney trick translates to virtual strings.
While introducing $\hat{M}_w$ increases the complexity of the upcoming proofs, it will turn out that $\hat{M}_w$ captures an indeterminacy that we cannot get rid of.

\begin{figure}[h]
\centering
\includegraphics[scale=.3]{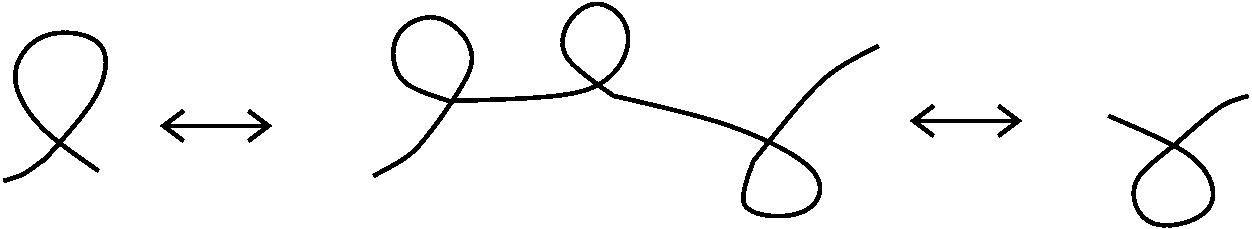}
\caption{The idea behind the move $\hat{M}_w$: use the Whitney trick to change which side the kink is on.}
\label{figure3}
\end{figure}

\item The notation $\sim$ means ``up to isomorphism''.
\item The motivation for our definition of $\hat{M}_1, \hat{M}_2$ can be found in \cite{5}, sections $4.2$ and $7.1$, where Turaev explains the origin of the moves $M_1, M_2$ in terms of Reidemeister move one.
We will come back to this motivation in the next section, when developing the theory of long virtual strings.
\end{itemize}
\end{remark}

With this new definition of elementary extensions, we can talk about primitive, isomorphic and homologous based matrices; these notions are analogous to the unframed case, see Definition \ref{BMiso}.

\begin{lemma}[see \cite{5}, Lemma 6.2.1]\label{lemma1}
Every based matrix is obtained from a primitive based matrix by elementary extensions.
Two homologous primitive based matrices are isomorphic up to a single application of $\hat{M}_w$.
\end{lemma}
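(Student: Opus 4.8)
The plan is to mimic Turaev's proof of the corresponding statement in the unframed case (\cite{5}, Lemma 6.2.1), tracking carefully where the extra move $\hat{M}_w$ forces a modification. The first assertion — that every based matrix is obtained from a primitive one by elementary extensions — is the easy half: given a non-primitive matrix $(G,s,b)$, by definition it arises from some smaller matrix by one of $\hat{M}_1,\hat{M}_2,\hat{M}_3$ (we need not list $\hat{M}_w$ here since it does not change $G$), and we induct on $|G|$, which strictly decreases. So I would dispatch this in a sentence or two and concentrate on the second assertion.

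For the second assertion, the approach is: start with two homologous primitive based matrices $M=(G,s,b)$ and $M'=(G',s',b')$. By definition of homologous there is a zig-zag of elementary extensions, their inverses, and $\hat{M}_w$-moves connecting them. The goal is to ``push all the $\hat{M}_w$-moves to one end'' and then invoke the unframed argument. Concretely, I would first establish commutation/absorption lemmas: (i) $\hat{M}_w$ commutes past $\hat{M}_1,\hat{M}_2,\hat{M}_3$ (and their inverses) up to isomorphism, since $\hat{M}_w$ acts on a single annihilating element and the extension moves add fresh elements that do not interact with it; (ii) using the stated relations $\hat{M}_w\sim\hat{M}_1^{-1}\circ\hat{M}_3$ and $\hat{M}_w^{-1}\sim\hat{M}_2^{-1}\circ\hat{M}_3$, any occurrence of $\hat{M}_w^{\pm1}$ can be traded for a combination of the three extension moves and their inverses. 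Combining these, I can rewrite the whole zig-zag so that it consists only of $\hat{M}_1^{\pm1},\hat{M}_2^{\pm1},\hat{M}_3^{\pm1}$, possibly followed by at most one $\hat{M}_w$. The key point is that two consecutive $\hat{M}_w$'s collapse: applying $\hat{M}_w$ turns an annihilating element into a core element, and there is no way to apply $\hat{M}_w$ again to that same element, so a chain of $\hat{M}_w$-moves on distinct elements must be reconciled — here I would argue that on a \emph{primitive} target matrix all but one such adjustment is already absorbed into the matrix, i.e. a primitive matrix admits essentially one ``pending'' Whitney ambiguity.

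Once the zig-zag is in normal form $M \xrightarrow{\ \text{ext}^{\pm1}\ } \hat{M}_w$-move $\to M'$, the inner part is a homology in the sense of the unframed theory (Definition \ref{BMiso} with $\hat M_3$ playing the role of $M_3$, and $\hat M_1,\hat M_2$ now adding pairs rather than singletons). Here I would reprove the unframed ``confluence'' lemma in this slightly modified setting: any two matrices connected by extension moves and inverses have a common further extension (a diamond lemma), so primitivity forces the two primitive matrices at the ends to be isomorphic. Feeding this back, $M$ and $M'$ become isomorphic after a single $\hat{M}_w$, which is exactly the claim.

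The main obstacle I anticipate is the bookkeeping in the normal-form step: $\hat{M}_w$ is not itself an extension (it doesn't change $G$), so it sits awkwardly in the calculus, and one must be careful that trading it for $\hat M_1^{-1}\circ\hat M_3$ does not create new primitivity obstructions or fail to commute with a neighbouring inverse move. In particular I expect the delicate case to be when an $\hat M_w$ acts on an element that a later $\hat M_1^{-1}$ (or $\hat M_2^{-1}$) wants to delete — one has to check the orders of operations can be swapped, using that $\hat M_w$ only toggles the ``annihilating vs.\ core'' status and that a primitive matrix cannot contain a deletable annihilating \emph{or} core element except in the one residual Whitney slot. Establishing that ``at most one'' $\hat M_w$ survives — rather than an uncontrolled number — is the real content, and is where the primitivity hypothesis must be used essentially, exactly as the $d$-element ambiguity was handled for SBMs.
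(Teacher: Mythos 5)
Your outline follows the same broad strategy as the paper (normalize the zig-zag, then use primitivity of the two endpoints), but two of your key steps are not correct as stated. First, your commutation claim (i) fails precisely in the overlapping cases: in a homology the element acted on by $\hat M_w$ may be one that a neighbouring move has just created or is about to delete, and then the two moves do not commute but collapse to a different move — e.g. $\hat M_w\circ\hat M_1\sim\hat M_3$, $\hat M_w\circ\hat M_3\sim\hat M_2$, $\hat M_2^{-1}\circ\hat M_w\sim\hat M_3^{-1}$, $\hat M_3^{-1}\circ\hat M_w\sim\hat M_1^{-1}$. Working out all of these interchange relations (together with the $\hat M_i$-versus-$\hat M_j^{-1}$ cases) is exactly the content of the paper's Claim \ref{claim1}, and it is not dispensable bookkeeping: it is the bulk of the proof.

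Second, the plan to trade every $\hat M_w^{\pm1}$ for $\hat M_1^{-1}\circ\hat M_3$ or $\hat M_2^{-1}\circ\hat M_3$ and then ``run the unframed confluence argument'' cannot work as described. The calculus of $\hat M_1^{\pm1},\hat M_2^{\pm1},\hat M_3^{\pm1}$ is not closed under the local rewriting you need: pushing an inverse past an extension on overlapping elements reproduces $\hat M_w$ (indeed $\hat M_1^{-1}\circ\hat M_3\sim\hat M_w$ when the deleted annihilating element is one of the complementary pair), so $\hat M_w$ cannot be eliminated. Moreover, if your elimination-plus-confluence scheme did go through, it would prove that homologous primitive matrices are isomorphic outright, which is false — the single-kink framed string gives a primitive matrix with one annihilating row homologous to, but not isomorphic to, the one with one core row; this residual Whitney ambiguity is the whole point of the statement. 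Finally, your mechanism for ``at most one $\hat M_w$ survives'' (that one cannot apply $\hat M_w$ twice to the same element, and that a primitive matrix ``absorbs'' the rest) is not the right one and is left unproven; the paper's argument is that after normalization one is left with $\hat M_w^k T\sim T'$, and the identities $\hat M_w^{2}\sim\hat M_2\circ\hat M_1^{-1}$ and $\hat M_w^{-2}\sim\hat M_1\circ\hat M_2^{-1}$ let one cancel even powers against primitivity of $T$ and $T'$, forcing $|k|\leq1$. Supplying these relations and the full case analysis is what your proposal defers, so as written it has a genuine gap.
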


\begin{proof}
For the first statement, starting with a based matrix we apply $\hat{M}_i^{-1}$, $i=1,2,3$, until we can't anymore, then try to apply $\hat{M}_w^{\pm 1}$ and see if we can apply even more inverse elementary extensions.
The cardinality of $G$ is monotone decreasing under this process, so it will at some point terminate, and the resulting matrix will be primitive by definition.

The proof of the second statement is similar in spirit to the one in \cite{5}.
It relies heavily on the following claim:

\begin{claim}
\label{claim1}
A move $\hat{M}_i$ followed by a move $\hat{M}_j^{-1}$ ($i,j\in \{1,2,3,w\}$) has the same effect as an isomorphism, a move $\hat{M}_k^{\pm1}$ or a move $\hat{M}_k^{-1}$ followed by a move $\hat{M}_l$.
Moreover, $\hat{M}_w\circ\hat{M}_j\sim\hat{M}_j\circ\hat{M}_w$ or $\hat{M}_w\circ\hat{M}_j\sim\hat{M}_k$ for $j,k\neq w$.
\end{claim}

Assume the claim holds.
Suppose that two primitive matrices $T,T'$ are homologous, i.e. they're related by a finite sequence of $\hat{M}_i^{\pm1}$ and isomorphisms.
Clearly isomorphisms commute with elementary extensions, so we can just stack them at the end of the sequence.
Using the first statement of the claim we can then rewrite the sequence of moves in the following way:

\begin{equation}\label{eq1}\hat{M}_{i_1}\cdots \hat{M}_{i_n}\hat{M}^{-1}_{j_1}\cdots\hat{M}^{-1}_{j_m}T\sim T'\end{equation}

\noindent Moreover, using the second statement of the claim we can stack the terms that look like $\hat{M}_w^{\pm1}$ in the middle of the sequence, so we can rewrite Eq. (\ref{eq1}) in the following way:

\begin{equation}\hat{M}_{i_1}\cdots \hat{M}_{i_n}\hat{M}_w^k\hat{M}^{-1}_{j_1}\cdots\hat{M}^{-1}_{j_m}T\sim T';\end{equation}

\noindent where $i_l, j_l\neq w$ for all $l$, and $k\in\Z$.
By primitivity of $T,T'$ we cannot apply $\hat{M}_j^{-1}$ to $T$ or obtain $T'$ via $\hat{M}_i$, so the only thing that survives is

\begin{equation}\hat{M}_w^kT\sim T'.\end{equation}
Finally, if $|k|>1$ we could actually cancel an even power out of it, since we can rewrite $\hat{M}_w^2\sim \hat{M}_2\circ \hat{M}_1^{-1}$, $\hat{M}_w^{-2}\sim \hat{M}_1\circ\hat{M}_2^{-1}$, and delete the terms we obtained as we did above.
This shows that $|k|\leq 1$ and proves the second statement of the lemma.
\end{proof}

\begin{remark}
The final part of the proof simply shows that, since we can now only delete a pair of rows of zeroes/core elements in our matrix, it is possible that we are left with a single annihilating/core row (equivalent on the knot to a single kink), and the $\hat{M}_w$ move changes the annihilating row into a core row or vice versa (changes the rotation of the kink).
$|k|\leq 1$ means can only end up with at most one kink, as if we had two we could slide them along the knot until they're next to each other and use a sequence of framed flat Reidemeister one moves to delete them.
\end{remark}

\begin{proof}[Proof of Claim \ref{claim1}]
Let's start with the first statement.
We need to check this for all possible choices of $i,j$.

\begin{itemize}
\item $i,j\in\{1,2\}$: We first add two elements $\{g_1.g_2\}$, then remove two elements $\{h_1,h_2\}$. If $i=j$ either these elements coincide, in which case $M_j^{-1}\circ M_i=id$, or they're disjoint, in which case the two operations commute, or one of them coincides, e.g. $g_1=h_1$, in which case $\hat{M}_j^{-1}\circ\hat{M}_i$ is an isomorphism (we replaced $h_2$ in the original matrix with $g_2$, another element of the same type).
If $i\neq j$ the same considerations as above hold: either $\{g_1,g_2\}$ is disjoint from $\{h_1,h_2\}$, in which case the two operations commute, or they coincide in at least one element, say $g_1=h_1$.
But since $i\neq j$ this means that $g_1$ is both annihilating and core, so that would require $s$ to be annihilating, in which case $\hat{M}_1=\hat{M}_2$ and we can refer to the previous case.
\end{itemize}

\noindent To avoid repetition we will from now on not mention the case when the sets of elements acted upon are disjoint, as it's clear that in that case the matrices commute.

\begin{itemize}
\item $i=1, j=3$: In this case we add two annihilating elements $g_1,g_2$ and delete two complementary elements $h_1,h_2$.
If $g_1=h_1$ the fact that $h_1,h_2$ are complementary reduces to $b(h_2,k)=b(s,k)$ for all $k$, so that $h_2$ is a core element.
This means that overall we changed a core element to an annihilating one, hence $\hat{M}_3^{-1}\circ \hat{M}_1\sim \hat{M}^{-1}_w$.
If the pairs are the same we have once again that $s$ is annihilating, and that our matrix is isomorphic to what it was before applying $\hat{M}_3^{-1}\circ\hat{M}_1$.

\item $i=2, j=3$: Similar to the previous case, except we end up with $\hat{M}_w$.

\item $i=3, j=1$: We add two complementary elements $\{g_1,g_2\}$ and then remove two annihilating elements $\{h_1,h_2\}$.
If $g_1=h_1$ this means that $g_2$ is a core element, so overall we added a core element and removed an annihilating one.
This means $\hat{M}_1^{-1}\circ \hat{M}_3\sim \hat{M}_w$.
If the pairs agree once again the matrix is unchanged up to isomorphism (and  $s$ is annihilating).

\item $i=3, j=2$: Similar to the previous case, except we end up with $\hat{M}_w^{-1}$.

\item $i=j=3$: this case is unchanged from the proof in \cite{5}.

\item $i=w, j=1$: $\hat{M}_w$ changes an element $g$ from annihilating to core, and $\hat{M}_1^{-1}$ removes two annihilating elements $\{h_1,h_2\}$.
Clearly $g$ cannot coincide with $h_i$, so the two operations commute.

\item $i=1, j=w$: once again the elements cannot coincide, so the operations trivially commute.

\item $i=w, j=2$: $\hat{M}_w$ changes an element $g$ from annihilating to core, and $\hat{M}_2^{-1}$ removes two core elements $\{h_1,h_2\}$.
If $g=h_1$ the overall result was removing the annihilating element $g$ and the core element $h_2$.
But this can be achieved by an application of $\hat{M}_3^{-1}$ (since the elements $g, h_2$ are complementary).

\item $i=2, j=w$: as above, if $g=h_1$ the net result of $\hat{M}_w^{-1}\circ \hat{M}_2$ was to add a core element ($h_2$) and an annihilating one ($g$), which we can do through an application of $\hat{M}_3$.

\item $i=w, j=3$: We change an element $g$ from annihilating to core, then remove two complementary elements $\{h_1, h_2\}$.
If $g=h_1$ it follows that $h_2$ must be annihilating, and since $g$ was annihilating to start with we get $\hat{M}_3^{-1}\circ \hat{M}_w\sim \hat{M}_1^{-1}$ (removing $g, h_2$).

\item $i=3, j=w$: This adds two complementary elements $h_1, h_2$, then changes a core element $g$ to annihilating.
If $g=h_1$ this means that $h_2$ is annihilating, so the net result is equivalent to $\hat{M}_1$ (adding $g, h_2$).

\item $i=j=w$: it's the identity if they coincide, they commute if disjoint.

\end{itemize}

\noindent Let us now verify the second statement:

\begin{itemize}
\item $j=1$: $\hat{M}_w\circ\hat{M}_1$ adds two annihilating elements $\{g_1, g_2\}$ and changes an element $h$ from annihilating to core.
If $h=g_1$ then the net result is adding a core element and an annihilating one, so $\hat{M}_w\circ \hat{M}_1\sim\hat{M}_3$.

\item $j=2$: the elements introduced by $\hat{M_2}$ cannot be acted upon by $\hat{M}_w$, so the two terms commute.

\item $j=3$: we add complementary $\{g_1, g_2\}$, then change $h$ from annihilating to core. 
If $h=g_1$ then $g_2$ must be a core element, so $\hat{M}_w\circ \hat{M}_3\sim \hat{M}_2$.

\end{itemize}

\noindent Note that a similar statement can be made for $\hat{M}_j^{-1}\circ \hat{M}_w^{-1}$, where the $\hat{M}_w^{-1}$ can be moved to the left.
This proves the claim and completes the proof of Lemma \ref{lemma1}.

\end{proof}

\begin{theorem}[see \cite{5}, Theorem 7.1.1]
If two framed virtual strings are homotopic, then their based matrices are homologous.
\end{theorem}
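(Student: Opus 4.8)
The plan is to follow Turaev's original strategy (\cite{5}, Theorem 7.1.1), checking that each flat framed Reidemeister move on a virtual string induces, at the level of based matrices, either an isomorphism or one of the elementary extensions $\hat{M}_1,\hat{M}_2,\hat{M}_3,\hat{M}_w$ from Definition \ref{elemmovesframed}. Since any homotopy of framed virtual strings is a finite composition of such moves, establishing this move-by-move correspondence immediately yields that homotopic framed virtual strings have homologous based matrices.

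First I would dispose of the moves that only involve virtual crossings and of the flat Reidemeister three move, together with the mixed move: these do not add or delete arrows, and the standard computation (identical to the unframed case treated in \cite{5}) shows that the intersection numbers $ab\cdot cd$ and the linking corrections $\epsilon$ defining $b$ are unchanged, so the based matrix is preserved up to isomorphism. Next comes the flat Reidemeister two move, which creates or destroys a pair of arrows; here the point is that the two new arrows are ``parallel'' in the string, hence become complementary elements in the sense of Definition \ref{def2}, so the move realizes $\hat{M}_3^{\pm 1}$ — again this is exactly as in the unframed setting, now renamed. The genuinely new case is the flat framed Reidemeister one move of Fig. \ref{flatframedR1}: it introduces \emph{two} new arrows (rather than one, as ordinary flat R1 would), and the crux is to verify that the resulting pair of new elements in $G(\alpha)$ is either a pair of annihilating elements or a pair of core elements, depending on the configuration — i.e. that the move induces $\hat{M}_1^{\pm1}$ or $\hat{M}_2^{\pm1}$. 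This is the step I expect to be the main obstacle, and it is precisely the step the new definition of elementary extensions was designed to accommodate: one must carefully track, for each of the two newly created arrows, that it is unlinked from every other arrow and has no arrows in the interior of the short arc it bounds, forcing $ab\cdot cd=\epsilon=0$ (annihilating) or that it exactly mirrors the behavior of $s$ (core), and that the two arrows are of the \emph{same} type. The appearance of $\hat{M}_w$ itself need not be checked as a separate move: by the remark following Definition \ref{elemmovesframed}, $\hat{M}_w$ is already a composite of $\hat{M}_1^{-1}$ and $\hat{M}_3$ up to isomorphism, so it is automatically a homology, and Lemma \ref{lemma1} guarantees the resulting equivalence relation is well-behaved.

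Concretely, I would set up coordinates on the core circle near the kink, label the two new double points, and compute the three quantities $b(\text{new},s)$, $b(\text{new},e)$ for an arbitrary old arrow $e$, and $b(\text{new}_1,\text{new}_2)$ directly from the definitions (smoothing and intersection index for the first, the $ab\cdot cd+\epsilon$ formula for the others), in each of the finitely many local pictures arising from the framed R1 move and its variants (the two possible rotation numbers of the kink). The framed R1 move, unlike classical R1, keeps the writhe-type quantity controlled, and the expectation is that in one local picture both new arrows are annihilating and in the other both are core — matching $\hat{M}_1$ and $\hat{M}_2$ respectively — which is consistent with the interpretation in the remark after Lemma \ref{lemma1} of $\hat{M}_w$ as changing the rotation of a single surviving kink. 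Once this verification is complete for the generating moves, the theorem follows, and combined with Lemma \ref{lemma1} it shows the isomorphism class of the primitive based matrix (up to one application of $\hat{M}_w$) is a homotopy invariant of framed virtual strings.
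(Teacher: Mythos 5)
Your proposal is correct and follows essentially the same route as the paper, whose proof is exactly this move-by-move check (analogous to Turaev's Theorem 7.1.1) with the single new point being that the flat framed Reidemeister one move creates two double points at once, yielding a pair of same-type elements and hence realizing $\hat{M}_1^{\pm1}$ or $\hat{M}_2^{\pm1}$ — which is precisely what motivated those definitions. Your additional observation that $\hat{M}_w$ needs no separate Reidemeister realization, being a composite of $\hat{M}_1^{-1}$ and $\hat{M}_3$ up to isomorphism, is consistent with the paper's remark and does not change the argument.
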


\begin{proof}
To prove the theorem we need to see how the three framed flat Reidemeister moves affect the based matrix.
The proof is analogous to the proof in \cite{5}, except that framed Reidemeister move one adds two double points instead of one; indeed, the analogy to Turaev's proof is what motivated the definitions of $\hat{M}_1,\hat{M}_2$.
\end{proof}

To generalize our framed virtual strings to the singular case, we keep the definitions given in \cite{2} of annihilating-like, core-like and the singularity switch, and we replace the Turaev elementary extensions with ours.

\begin{definition}
A singular based matrix is a quadruple $(G, s, d, b\colon G\times G\to H)$, where $H$ is an abelian group, $G$ is a finite set, $s, d\in G$ and the map $b$ is skew-symmetric.
\begin{itemize}
\item We call $g\in G\setminus \{s,d\}$ annihilating if $b(g,h)=0$ for all $h\in G$;
\item We call $g\in G\setminus \{s,d\}$ a core element if $b(g,h)=b(s,h)$ for all $h\in G$;
\item We call $g_1,g_2\in G\setminus \{s,d\}$ complementary if $b(g_1,h)+b(g_2,h)=b(s,h)$ for all $h\in G$.
\item We call $g\in\{s,d\}$ annihilating-like if $b(g,h)=0$ for all $h\in G$.
\item We say $d$ is core-like if $b(s,h)=b(d,h)$ for all $h\in G$.
\end{itemize}
\end{definition}

\begin{definition}The elementary operations on SBMs are as follows:
\begin{itemize}
\item $\hat{M}_1$ changes $(G,s,d,b)$ into $(G\coprod \{g_1,g_2\}, s, d, b_1)$ such that $b_1$ extends $b$ and $g_1, g_2$ are annihilating;
\item $\hat{M}_2$ changes $(G,s,d,b)$ into $(G\coprod \{g_1,g_2\}, s, d, b_2)$ such that $b_2$ extends $b$ and $g_1, g_2$ are core elements;
\item $\hat{M}_3$ changes $(G,s,d,b)$ into $(G\coprod \{g_1,g_2\}, s, d, b_3)$ such that $b_3$ is any skew-symmetric map extending $b$ in which $g_1, g_2$ are complementary.
\item $\hat{M}_w$ changes $(G,s,d,b)$ into $(G,s,d,b')$ by acting on an annihilating element $g\in G\setminus \{s,d\}$ and changing it into a core element.
\item Suppose that there is $g\in G$ such that $b(g,h)+b(d,h)=b(s,h)$ for all $h$. Then $N$ changes $(G,s,d,b)$ into $(G,s,g,b)$.
\end{itemize}
\end{definition}\

\noindent The notions of isomorphic, primitive and homologous singular based matrices are analogous to the unframed case, see Definition \ref{SBMiso}

\begin{lemma}
Every SBM is obtained from a primitive SBM by elementary extensions and singularity switches.
\end{lemma}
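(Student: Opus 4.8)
The plan is to imitate the monotonicity argument that proved the first statement of Lemma \ref{lemma1}, with the extra bookkeeping needed to accommodate the distinguished element $d$ and the singularity switch $N$. First I would observe that, exactly as in the non-singular framed case, the cardinality of the finite set $G$ is a non-negative integer that strictly decreases whenever we apply any of $\hat{M}_1^{-1}$, $\hat{M}_2^{-1}$, $\hat{M}_3^{-1}$, and is unchanged under $\hat{M}_w^{\pm 1}$ and under the singularity switch $N$. So starting from an arbitrary SBM, I would repeatedly apply inverse elementary extensions whenever possible; since $|G|$ cannot decrease forever, this process terminates in a matrix from which no inverse extension can be applied even after first attempting $\hat{M}_w^{\pm 1}$ and $N$. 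By the definition of primitive SBM (Definition \ref{SBMiso}, adapted to the framed case), the resulting matrix is primitive, and by construction the original SBM is recovered from it by forward elementary extensions (the intermediate applications of $\hat{M}_w$ and $N$ that were used only to unlock further reductions can be read in the forward direction, or folded into the statement since it already permits singularity switches).

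The one genuinely new point compared to Lemma \ref{lemma1} is that the reduction must never try to delete the rows/columns corresponding to $s$ or $d$: the inverse moves $\hat{M}_i^{-1}$ are only defined on elements of $G \setminus \{s,d\}$, and similarly $\hat{M}_w$ acts on an annihilating element of $G\setminus\{s,d\}$. This is automatically respected because the forward moves $\hat{M}_i$ only ever introduce elements distinct from $s$ and $d$, so the inverse moves only ever remove such elements; the element $d$ is simply carried along passively, just as $s$ is. The singularity switch $N$ may, however, change which element plays the role of $d$, and this is exactly why the statement allows singularity switches in the reduction: after an $N$, a row that was previously "protected" as $d$ becomes an ordinary element that may now be eligible for an inverse extension (or vice versa), so interleaving $N$ with the inverse extensions is what guarantees we reach something genuinely primitive.

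The main obstacle — really the only subtlety — is making sure the termination argument is not broken by the moves that preserve $|G|$. Since $\hat{M}_w$ and $N$ do not change $|G|$, one must check that they cannot be used to "cycle" indefinitely without ever enabling a cardinality-decreasing move; but $\hat{M}_w^2$ is (up to isomorphism) $\hat{M}_2\circ\hat{M}_1^{-1}$ and $N^2$ is the identity, so there is no infinite run consisting only of $\hat{M}_w$'s and $N$'s up to isomorphism, and any genuinely new configuration they produce is one we test once for further reductions. Packaging this as: "apply inverse extensions greedily; whenever stuck, try each of the finitely many combinations of $\hat{M}_w^{\pm1}$ and $N$ and check whether an inverse extension becomes available; if none does, stop," gives a process that terminates and outputs a primitive SBM, which is what the lemma asserts. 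I would keep the proof to roughly this level of detail, citing Lemma \ref{lemma1} and \cite{5}, Lemma 6.2.1, for the parts that are verbatim the same.
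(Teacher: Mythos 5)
Your proposal follows essentially the same argument as the paper: reduce $|G|$ greedily by inverse elementary extensions, and when stuck try $\hat{M}_w^{\pm1}$ or $N$ to unlock further reductions, with termination guaranteed by the monotone decrease of $|G|$ and primitivity of the output by definition. Your extra remarks (that $s,d$ are never deleted, and that $\hat{M}_w$ and $N$ cannot cycle indefinitely) are correct refinements of the same approach, so the proof is fine.
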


\begin{proof}
Reduce the cardinality of $G$ as much as possible by applying $\hat{M}_1^{-1}$, $\hat{M}_2^{-1}$, $\hat{M}_3^{-1}$.
If no more inverse elementary extensions can be applied, try applying $\hat{M}^{\pm 1}_w$ or $N$, then see if you can reduce the cardinality of $G$ further.
Because the cardinality of $G$ is monotonically decreasing under this process, it will at some point terminate.
The resulting SBM is primitive.
\end{proof}

Recall the moves $D_{ij}$ from \cite{2}: $D_{12}$ transformed the distinguished element $d$ from annihilating-like to core-like, while $D_{21}$ transformed $d$ from core-like to annihilating-like. 
We can do the same operations in our framed theory, even if the formulas are slightly different: $D_{12}=\hat{M}_3^{-1}\circ N \circ \hat{M}_2$ and $D_{21}=\hat{M}_2^{-1}\circ N\circ \hat{M}_3$, where $\hat{M}_3^{\pm1}$ adds/removes a pair \{core, annihilating\}.

\begin{theorem}[see \cite{2}, Theorem 12]
Given two homologous primitive SBMs, we can obtained one from the other via an isomorphism, at most one of $D_{12}, D_{21},N$ and at most one $\hat{M_w}$.\end{theorem}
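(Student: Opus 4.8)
The plan is to mimic the structure of the proof of Lemma \ref{lemma1}, but now carrying along the extra distinguished element $d$ and the singularity switch $N$. First I would establish the singular analogue of Claim \ref{claim1}: a composition $\hat{M}_i$ followed by $\hat{M}_j^{-1}$ (for $i,j\in\{1,2,3,w\}$) can be rewritten as an isomorphism, a single $\hat{M}_k^{\pm 1}$, or an $\hat{M}_k^{-1}$ followed by an $\hat{M}_l$; and moreover $\hat{M}_w$ commutes with each $\hat{M}_j$ up to replacement by another $\hat{M}_k$, exactly as in the unframed framed case. Almost all of these cases are literally the ones checked in the proof of Claim \ref{claim1}, since the elements $g_1,g_2$ involved lie in $G\setminus\{s,d\}$ and the computations never touched $d$; the only genuinely new sub-case is when the element being deleted is forced to interact with $d$, which cannot happen for $\hat{M}_1,\hat{M}_2,\hat{M}_3,\hat{M}_w$ since those only add/remove elements of $G\setminus\{s,d\}$. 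So this step is essentially bookkeeping.

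Next I would handle the commutation relations between the singularity switch $N$ (and its conjugates $D_{12},D_{21}$) and the elementary extensions $\hat{M}_i^{\pm 1}$. The key point is that $N$ changes only the label of which element is $d$, not the form $b$; so if an $\hat{M}_i$ adds elements $g_1,g_2\not\in\{s,d\}$ and then we apply $N$ using some $g$ with $b(g,h)+b(d,h)=b(s,h)$, the element $g$ realizing the switch either is one of the newly added $g_1,g_2$ or is an old element. When $g$ is old, $N$ commutes with $\hat{M}_i$ outright. When $g$ is one of the freshly added elements, I would show that the composite $N\circ \hat{M}_i$ equals (up to isomorphism) one of $D_{12},D_{21}$ possibly followed by a single elementary extension — this is where the identities $D_{12}=\hat{M}_3^{-1}\circ N\circ \hat{M}_2$, $D_{21}=\hat{M}_2^{-1}\circ N\circ \hat{M}_3$ do their work, letting me trade a ``switch then add'' for an ``add then switch.'' One must also record that $N^2\sim \mathrm{id}$ (the switch is an involution when the same complementary pair is used), which prevents accumulation of switches, just as $\hat{M}_w^2$ was reabsorbed via $\hat{M}_2\circ\hat{M}_1^{-1}$ in Lemma \ref{lemma1}.

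With these two batches of commutation relations in hand, the endgame is the same normalization argument as before. Given homologous primitive SBMs $T,T'$, write the connecting word as a string of $\hat{M}_i^{\pm 1}$, $N$'s and isomorphisms; push isomorphisms to the far end; use the singular Claim to move all inverse elementary extensions to the right of all forward ones; use the $\hat{M}_w$-commutation to collect the $\hat{M}_w$'s into a single central block $\hat{M}_w^k$; use the $N$-commutation to collect the switches (now in the form of at most one $D_{12}$, one $D_{21}$, or one bare $N$) into a central block as well. Primitivity of $T$ and $T'$ then kills every forward extension on the left and every inverse extension on the right, leaving $T'\sim (\text{central block})\cdot T$. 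Finally I reduce the central block: $|k|\le 1$ for $\hat{M}_w$ exactly as in Lemma \ref{lemma1}, and the switch block reduces to at most one of $D_{12},D_{21},N$ because $D_{12}\circ D_{21}\sim \mathrm{id}$, $N^2\sim\mathrm{id}$, and $D_{12},D_{21}$ are themselves the two ``directions'' of the switch — any longer word telescopes. The main obstacle I expect is precisely the interaction between $N$ and the elementary extensions: unlike $\hat{M}_w$ (which changes $b$ but not the set of distinguished elements) $N$ changes the distinguished element but not $b$, and one has to be careful that the element used to perform a switch is not accidentally removed by a subsequent $\hat{M}_i^{-1}$ — tracking this, and verifying that the resulting ``trade'' identities hold up to honest isomorphism rather than merely up to homology, is the delicate part; everything else is a transcription of the unframed-framed argument with $d$ as a spectator.
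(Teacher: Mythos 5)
Your plan is correct and is essentially the paper's own argument: reuse the non-singular Claim for the $\hat{M}_i,\hat{M}_w$ interactions (the extensions never touch $d$), analyze $\hat{M}_j^{-1}\circ N\circ\hat{M}_i$ to produce the $D_{12},D_{21}$ moves, normalize the word so primitivity kills the extensions at both ends, and reduce the remaining middle block. The only point the paper spells out that you gloss over is that the $\hat{M}_w$'s and the switches do not reduce as two independent blocks --- one needs the cross-relations (e.g.\ $N$ composed with $D_{ij}$ yields $\hat{M}_w^{\pm1}$, and $\hat{M}_w\circ N$ is either a commutation or a $D_{ij}$) to conclude that at most one $\hat{M}_w$ and one of $D_{12},D_{21},N$ survive.
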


\begin{remark}
The reason why the statement is a little more complicated than the respective statement in \cite{2} is that a framed primitive based matrix can already have either one or two elements in its homology class, and the introduction of the distinguished chord also gives one or two elements in the homology class.
Since the two possibilities are independent from each other, a homology class of SBMs can have one, two or four primitive elements in it.
\end{remark}

\begin{proof}
Let $P,P'$ be primitive homologous SBMs.
By definition, this means that they're related by a finite sequence of elementary extensions, their inverses and singularity switches.
Since $\hat{M}_j$ for $j\in\{1,2,3,w\}$ act on SBMs the same way that they acted on based matrices, we can replace $\hat{M}_j^{-1}\circ \hat{M}_i$ by $\hat{M}_l\circ \hat{M}_k^{-1}$ or $\hat{M}_k^{\pm1}$.
Moreover, isomorphisms commute with elementary extensions and singularity switches, so we can just stack them all on one side.
Then, in a similar fashion to the proof of Lemma \ref{lemma1}, we claim the following

\begin{claim} 
\label{claim2}
We can rewrite the sequence of moves between $P,P'$ as
\begin{equation}(\hat{M}_i, N )\circ (\text{seq of }\hat{M}_w^{\pm1}, D_{12}, D_{21}, N)\circ (\text{ isom. }) \circ (\hat{M}_i^{-1}, N )P=P'\end{equation}
where $i\in\{1,2,3\}$.
\end{claim}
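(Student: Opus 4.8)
The plan is to imitate the proof of Lemma \ref{lemma1} and Claim \ref{claim1}, now treating the singularity switch $N$ and the composite moves $D_{12}, D_{21}$ as extra ``letters'' that must be sorted. Start from an arbitrary word of elementary extensions, their inverses, singularity switches and isomorphisms carrying $P$ to $P'$. Since $\hat{M}_1,\hat{M}_2,\hat{M}_3,\hat{M}_w$ act on the underlying based matrix exactly as in the non-singular case, Claim \ref{claim1} already lets us replace every subword $\hat{M}_j^{-1}\circ\hat{M}_i$ with $i,j\in\{1,2,3,w\}$ by an isomorphism, a single $\hat{M}_k^{\pm1}$, or a word $\hat{M}_l\circ\hat{M}_k^{-1}$, and lets us drive every $\hat{M}_w^{\pm1}$ toward the middle. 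Also every isomorphism commutes with every elementary extension and with $N$ (it merely renames elements, including $s$, $d$ and the switched element), so all isomorphisms can be collected at a single prescribed position. The only genuinely new work is incorporating $N$ into this sorting.

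So the first substantive step is to record the rewriting rules that $N$ obeys, each of which is a short matrix computation in the style of the proof of Claim \ref{claim1}: (i) $N^2\sim\mathrm{id}$, since if $b(g,h)+b(d,h)=b(s,h)$ for all $h$ then also $b(d,h)+b(g,h)=b(s,h)$; (ii) $N$ commutes with $\hat{M}_1^{\pm1}$ and with $\hat{M}_w^{\pm1}$, and with $\hat{M}_2^{\pm1},\hat{M}_3^{\pm1}$ whenever the element switched by $N$ is disjoint from the pair added or removed (the generic situation); (iii) in the remaining, non-commuting cases the composite is by definition one of the $D$-moves, namely $\hat{M}_3^{-1}\circ N\circ\hat{M}_2=D_{12}$ and $\hat{M}_2^{-1}\circ N\circ\hat{M}_3=D_{21}$, while $\hat{M}_1^{-1}\circ N\circ\hat{M}_1$, $\hat{M}_2^{-1}\circ N\circ\hat{M}_2$, $\hat{M}_3^{-1}\circ N\circ\hat{M}_3$ reduce to $N$ or an isomorphism; (iv) $D_{12},D_{21}$ commute with $\hat{M}_w^{\pm1}$ and (up to acting on disjoint elements) with the $\hat{M}_i^{\pm1}$, $D_{12}$ and $D_{21}$ are mutually inverse up to isomorphism, and any word in $\{N,D_{12},D_{21}\}$ reduces to at most one of the three — exactly as in the unframed theory of \cite{2}.

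With these rules the sorting proceeds as in Lemma \ref{lemma1}: push all building-up letters $\hat{M}_i$ ($i\in\{1,2,3\}$) to the left and all building-down letters $\hat{M}_i^{-1}$ to the right using Claim \ref{claim1}, carrying the $N$'s along; an $N$ that becomes trapped between an $\hat{M}_i$ on its right and an $\hat{M}_j^{-1}$ on its left either commutes out of the way by rule (ii) or is absorbed into a single atomic $D_{12}$ or $D_{21}$ by rule (iii), which we then keep in the middle block alongside the $\hat{M}_w^{\pm1}$'s. The $N$'s that cannot be pulled into the middle stay adjacent to the outer $\hat{M}_i^{\pm1}$ blocks, which is precisely why those blocks are allowed to contain $N$'s in the statement. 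Assembling everything yields the asserted normal form
\[
(\hat{M}_i, N)\circ(\text{seq of }\hat{M}_w^{\pm1}, D_{12}, D_{21}, N)\circ(\text{isom.})\circ(\hat{M}_i^{-1}, N)\,P = P'.
\]

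The main obstacle will be the bookkeeping in rules (ii)--(iv): one must check, case by case, every way the element switched by $N$ can interact with the elements created, destroyed, or re-typed by a neighbouring $\hat{M}_1^{\pm1},\hat{M}_2^{\pm1},\hat{M}_3^{\pm1},\hat{M}_w^{\pm1}$, and verify that each composite is one of $\{\mathrm{isom},N,D_{12},D_{21}\}$ times a shorter word of the desired shape — roughly doubling the case analysis of Claim \ref{claim1}. The delicate cases are when $N$ switches $d$ with a newly created core element (as inside $D_{12}$), and when a subsequent $\hat{M}_w$ acts on the former value of $d$ after it has been demoted to a regular annihilating element; confirming that these still collapse to the listed letters is what makes the argument longer than its unframed analogue. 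Once the normal form is in place, primitivity of $P$ and $P'$ kills the outer blocks (as in Lemma \ref{lemma1}, no $\hat{M}_i^{-1}$ applies to $P$ even after $\hat{M}_w^{\pm1}$ and $N$, and $P'$ is not the image of any $\hat{M}_i$), and the middle block collapses using $\hat{M}_w^{2}\sim\hat{M}_2\circ\hat{M}_1^{-1}$, $N^{2}\sim\mathrm{id}$ and the relations of (iv) to an isomorphism followed by at most one $\hat{M}_w^{\pm1}$ and at most one of $D_{12},D_{21},N$, which is the theorem.
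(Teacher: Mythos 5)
Your plan follows the paper's route: push isomorphisms aside, let Claim \ref{claim1} sort all subwords in $\hat{M}_1,\hat{M}_2,\hat{M}_3,\hat{M}_w$, and then analyze case by case how a singularity switch sandwiched between extensions can be absorbed into the middle block. The gap is in the explicit rules you propose to verify. Rule (ii) is false as stated: $N$ does \emph{not} commute unconditionally with $\hat{M}_1^{\pm1}$, nor with $\hat{M}_w^{\pm1}$. If $d$ is core-like, every annihilating element $g$ satisfies $b(g,h)+b(d,h)=b(s,h)$, so $N$ may switch $d$ with an annihilating element that $\hat{M}_1$ has just created, and such an $N$ cannot be pulled in front of that $\hat{M}_1$ (the element does not exist earlier); similarly, when $N$ and $\hat{M}_w$ touch the same element their composite is not a commutation but, as in the paper, one of the $D_{ij}$. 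These intertwined cases are precisely where the content of the claim lies: in the paper's analysis the mixed words $\hat{M}_2^{-1}\circ N\circ\hat{M}_1$ and $\hat{M}_1^{-1}\circ N\circ\hat{M}_2$ reduce to $D_{12}\circ\hat{M}_w^{-1}$ and $\hat{M}_w\circ D_{21}$, and $\hat{M}_3^{-1}\circ N\circ\hat{M}_1$, $\hat{M}_1^{-1}\circ N\circ\hat{M}_3$ reduce to $D$-moves as well, while the diagonal words $\hat{M}_i^{-1}\circ N\circ\hat{M}_i$ force $s$ to be annihilating before they collapse. Your rule (iii) lists only the definitional words and the diagonal ones, so taken literally your rule set would not establish the catch-all statement that every composite is an element of $\{\mathrm{isom}, N, D_{12}, D_{21}\}$ times a shorter word of the allowed shape; what is actually needed is the full table of $\hat{M}_j^{-1}\circ N\circ\hat{M}_i$ for $i,j\in\{1,2,3\}$, which is exactly what the paper's proof of the claim consists of.

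A secondary inaccuracy, relevant to the theorem rather than to the claim itself: in the framed theory a word in $\{N,D_{12},D_{21}\}$ does not reduce ``exactly as in \cite{2}'' to at most one such letter, since the composition of $N$ with a $D_{ij}$ (in either order) is $\hat{M}_w^{\pm1}$; this is why the final statement allows one $\hat{M}_w$ in addition to one of $D_{12},D_{21},N$. Since you do flag the delicate interactions at the end, the defect is repairable, but as written rules (ii)--(iv) would have to be corrected and the missing mixed cases checked before the sorting argument goes through.
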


Assuming the claim holds, primitivity of $P,P'$ means that there can be no $\hat{M}_i^{\pm1}, N$ at either end of the sequence.
It is then enough to show that a sequence of $\hat{M}_w^{\pm1}, D_{ij}, N$ moves can be reduced to having at most one $\hat{M}_w$ and one of $D_{ij}, N$.
We already know from Lemma \ref{lemma1} that even powers $\hat{M}_w^{\pm1}$ can be removed, as we can express them in terms of $\hat{M}_i$s and use primitivity.
Clearly $\hat{M}_w^{\pm1}$ commutes with $D_{ij}$, as one is applied to the distinguished element and the other is applied to something that isn't the distinguished element.
$\hat{M}_w^{\pm1}$ either commutes with $N$ (if they act on distinct elements) or the composition (in either order) is equivalent to one of the $D_{ij}$.

It is also easy to check that the composition of $N$ with a $D_{ij}$ (in either order) yields $\hat{M}^{\pm1}_{w}$.
Finally, $N^2=id$, $D_{ij}\circ D_{ji}= id$ and $D_{ij}^2$ is only possible if $s$ is annihilating, in which case $D_{ij}$ is just an isomorphism.
Because everything reduces or commutes, the square powers of $\hat{M}_w, N, D_{ij}$ reduce to isomorphisms, and the process is monotonically decreasing, at the end of the process we can have at most one $\hat{M}_w$ and one of $D_{ij},N$.

This completes the proof of the theorem.

\end{proof}

\begin{proof}[Proof of Claim \ref{claim2}]
The proof roughly goes the same way as in \cite{2}.
We need to show that can stack terms of the form $\hat{M}_w^{\pm1}, D_{ij}$ to the middle of the sequence, eventually with some $N$ terms.

We start by noticing that $D_{ij}$ commutes with $\hat{M}_i^{\pm1}$, $i\neq w$.
This is somewhat obvious, as the net result of $D_{ij}$ is to only act on the distinguished element.
Moreover, the proof of Lemma \ref{lemma1} already shows that we can push the $\hat{M}_i^{-1}$ terms to the right and stack the $\hat{M}_w$ terms in the middle.
Thus we only need to check that any move of the form $\hat{M}_j^{-1}\circ N \circ \hat{M}_i$ that isn't equivalent to $D_{ij}$ or an isomorphism can be rewritten in the desired form, i.e. as a sequence of equal or shorter length in which inverses extensions happen before extensions.
This is enough to bring the sequence in the desired form.

\begin{itemize}
\item $i=j=1$: $N$ exchanges the role of $d$ with a complementary element $k$, $\hat{M}_1$ introduces $\{g_1,g_2\}$ and $\hat{M}_1^{-1}$ removes $\{h_1, h_2\}$, all annihilating elements.
If $k\notin\{g_1, g_2\}$ or $d\notin \{h_1, h_2\}$ we can commute $N$ with the respective extension, at which point we have $\hat{M}_1^{-1}\circ \hat{M_1}$ on either side of $N$ and we can apply lemma \ref{lemma1} to get the desired form.
Now suppose that the action of $N$ intertwines the actions of $\hat{M}_1^{\pm1}$, say $k=g_1$ and $d=h_1$, so we add two annihilating elements, swap one of them with $d$ and remove what was $d$ with another annihilating element.
Since $k=g_1$ is annihilating, $d$ must've been core-like; then the only way we can remove $d$ via $\hat{M}_1^{-1}$ is if $s$ is annihilating.
Because of that, moves $\hat{M}_1, \hat{M}_2$ can be replaced by $\hat{M}_3$ with the complementary pair being \{annihilating, core\}.
But then $\hat{M}_1^{-1}\circ N \circ \hat{M}_1\sim \hat{M}_3^{-1}\circ N \circ \hat{M}_1=D_{12}$, whose case has already been covered.

\item $i=j=2$: Similarly to the previous case, $N$ intertwines the two extensions if and only if $s$ is annihilating, at which point the composition  $\hat{M}_2^{-1}\circ N \circ \hat{M}_2\sim D_{21}$.

\item $i=1, j=2$: we have $\hat{M}_2^{-1}\circ N \circ \hat{M}_1$.
Once again if $N$ acts on an element disjoint from either $\hat{M}_1$ or $\hat{M}_2^{-1}$ we can move the extensions next to each other and use lemma \ref{lemma1} to conclude.
If $N$ intertwines the two operations $d$ was core-like at the beginning, and the net result of the operations is equivalent to $D_{12}\circ\hat{M}_w^{-1}$.

\item $i=2, j=1$: similar to the previous case; if $N$ intertwines we have something equivalent to $\hat{M}_w\circ D_{21}$.

\item $i=1, j=3$: If $N$ intertwines we get $D_{12}$, otherwise they commute and we can apply lemma \ref{lemma1}.

\item $i=3, j=1$: Similar to the previous case, we get $D_{21}$ instead.

\item $i=2, j=3$: If $N$ intertwines, $\hat{M}_3^{-1}\circ N\circ \hat{M}_2^{-1}$ is equivalent to $D_{21}$.

\item $i=3, j=2$: Similar to the previous case, we get $D_{12}$ instead.

\item $i=j=3$: This case is unchanged from \cite{2}.

\end{itemize}

\end{proof}

\begin{cor}
There can be one, two or four primitive matrices in a homology class of SBMs.
\end{cor}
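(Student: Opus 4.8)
The plan is to read the corollary off the preceding theorem, which states that any two homologous primitive SBMs differ by an isomorphism together with \emph{at most one} of $D_{12},D_{21},N$ and \emph{at most one} $\hat{M}_w$. The conceptual point is that there are exactly two independent sources of indeterminacy among primitive SBMs in a fixed homology class: the move $\hat{M}_w$, which flips the framing-type data of a leftover kink (an annihilating row versus a core row), and the ``$d$-switch'' (one of $D_{12},D_{21},N$), which changes the choice of distinguished element $d$. First I would observe that the set of primitive SBMs in a fixed homology class, taken up to isomorphism, is a single orbit: by the theorem any two of its members are joined by a reduced relation of the above form, so the class is generated from any one representative by the operations ``apply $\hat{M}_w$ if available'' and ``apply the $d$-switch if available.''

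Next I would check that each of these two operations is an involution up to isomorphism on that orbit, and that they commute. For $\hat{M}_w$ this uses the identity $\hat{M}_w^{2}\sim\hat{M}_2\circ\hat{M}_1^{-1}$ already exploited in the proof of Lemma \ref{lemma1}, which is an isomorphism on a primitive SBM; for the $d$-switch we have $N^2=\mathrm{id}$, $D_{12}\circ D_{21}=\mathrm{id}$, and $D_{ij}^2$ is again an isomorphism on a primitive SBM. They commute because $\hat{M}_w$ acts on an element of $G\setminus\{s,d\}$ while the $d$-switch acts on $d$. Hence the orbit is acted on by a quotient of $\Z/2\times\Z/2$, and the orbit of any point — which is the whole homology class — has cardinality $1$, $2$, or $4$.

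Finally, to see that all three values are attained I would exhibit one example of each. A singular framed (or long) virtual string whose SBM is ``rigid'' in both directions — no annihilating element, and $d$ neither core-like nor admitting a complementary partner, so neither $\hat{M}_w$ nor any $d$-switch applies — gives a class of size $1$. Switching on exactly one of the two indeterminacies gives size $2$: either a leftover framing kink with no $d$-ambiguity (as in the Remark after Lemma \ref{lemma1}), or Henrich's examples from \cite{2} where only the ``Reidemeister one versus Reidemeister two'' position of the double point is ambiguous. Turning on both at once gives size $4$. For these examples one reuses the pictures from \cite{2} and from Section \ref{framedmatricessection} and checks by a direct computation that the resulting primitive SBMs are pairwise non-isomorphic.

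The main obstacle I expect is not the structural bound — that part is essentially formal once the theorem is in hand — but the verification that the two indeterminacies are genuinely independent in practice, i.e.\ producing honest singular framed/long virtual strings realizing the combinations $(\hat{M}_w\text{ nontrivial},\ d\text{-switch trivial})$, $(\hat{M}_w\text{ trivial},\ d\text{-switch nontrivial})$, and $(\text{both nontrivial})$, and then confirming, via the based-matrix invariants (the cardinality of $G$, the pattern of zero and core rows, the values of $b$), that the primitive SBMs obtained really are non-isomorphic, so that the orbit sizes are exactly $2$ and $4$ rather than collapsing to something smaller.
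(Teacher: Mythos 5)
Your proposal is correct and matches the paper's reasoning: the corollary is read directly off the preceding theorem together with the remark that the two indeterminacies --- the leftover-kink flip $\hat{M}_w$ and the choice of distinguished element via $D_{12},D_{21},N$ --- are independent binary ambiguities, giving $1$, $2$ or $4$ primitive SBMs per homology class. The paper does not carry out your third step (exhibiting examples realizing each count), treating the statement as the structural bound coming from the theorem, so that part of your plan is optional extra work rather than a required ingredient.
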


We can now look at the framed singular virtual string associated to a framed flat virtual knot with exactly one double point.
The construction is exactly the same as the unframed case: the arrow corresponding to the double point is the preferred element, we do not allow framed Reidemeister one moves that involve the double point, and we add the extra move (s-ii) as in \cite{2} (it will correspond to the singularity switch).
We get the SBM associated to the framed virtual string again using the intersection index and the formula $b(e,f)=ab\cdot cd+\epsilon$.
We pick the matrix representation of $b$ in which $s$ is the first row and $d$ is the last row.

\begin{theorem}
If two framed singular virtual strings are homotopic, their SBMs are homologous.
\end{theorem}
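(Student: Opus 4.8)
The plan is to mirror the structure of the analogous theorem in \cite{2}, but with the framed elementary extensions $\hat{M}_1,\hat{M}_2$ (and the Whitney move $\hat{M}_w$) in place of Turaev's $M_1,M_2$, and to verify that the singularity switch behaves exactly as in the unframed singular case. Concretely, I would enumerate the framed flat Reidemeister moves together with the singular moves (s-i) and (s-ii) from \cite{2} (rigid vertex isotopy plus the move that allows sliding a kink past the double point), and check for each how it changes the SBM associated to the framed singular virtual string. The non-singular framed moves have already been analyzed in the proof of the previous theorem on framed (non-singular) based matrices, so those cases carry over verbatim provided no arrow involved is the distinguished element $d$; the only genuinely new work is bookkeeping when a move interacts with $d$.

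The key steps, in order: (1) Moves not involving the double point act on $G\setminus\{d\}$ exactly as in the non-singular framed theory, so by the previous theorem they change $b$ by a sequence of $\hat{M}_i^{\pm1}$ and isomorphisms fixing $d$ — hence give homologous SBMs. (2) A framed flat Reidemeister two move whose new arrows straddle $d$, and a Reidemeister three move with $d$ as one of its strands, are handled by the same local computation as in \cite{2}: one checks that the resulting change in $b$ is an $\hat{M}_3^{\pm1}$ (the complementary-pair move is unchanged from the unframed case, as noted in Definition~\ref{elemmovesframed}), possibly composed with an isomorphism. (3) The move (s-ii) that passes a kink over the double point produces, on the matrix side, precisely the singularity switch $N$: if the double point was in a complementary pair with an element $g$, after the move $g$ becomes the distinguished element, which is the definition of $N$. (4) Because framed Reidemeister one is forbidden at $d$, the only way $d$'s type (annihilating-like versus core-like) changes is through the combinations $D_{12}=\hat{M}_3^{-1}\circ N\circ \hat{M}_2$ and $D_{21}=\hat{M}_2^{-1}\circ N\circ \hat{M}_3$ already recorded before the statement, and these are compositions of the allowed operations, hence preserve the homology class. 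Assembling (1)--(4) shows every generating move on framed singular virtual strings induces a homology of SBMs, which is the claim.

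The main obstacle I expect is step (3) together with the interaction of $\hat{M}_w$ with the distinguished element. In the framed theory a kink carries a rotation number, so when a kink slides past the double point one must check that the SBM change is the pure singularity switch $N$ and not $N$ composed with an uncontrolled $\hat{M}_w$ — i.e. that the annihilating-versus-core character of the auxiliary arrows is consistently tracked through the move. This is exactly the subtlety that forced $\hat{M}_w$ into Definition~\ref{elemmovesframed} in the first place, and I would resolve it by drawing the local picture of the framed kink-past-double-point move and reading off the signs $\epsilon$ in $b(e,f)=ab\cdot cd+\epsilon$ directly, confirming that any leftover ambiguity is absorbed into the $\hat{M}_w$ (and, on $d$, into $D_{12}/D_{21}$) that the homology relation already permits. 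Once that local verification is in hand, the rest is the routine move-by-move check, entirely parallel to \cite{5} and \cite{2}.
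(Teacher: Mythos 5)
Your proposal is correct and follows essentially the same route as the paper: the paper's (much terser) proof likewise reduces the moves not involving the double point to the non-singular framed theorem and identifies the move (s-ii) with the singularity switch $N$. Your extra caution about tracking $\hat{M}_w$ through the kink-past-double-point move is reasonable but turns out not to require separate treatment beyond what the homology relation already permits.
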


\begin{proof}
We need to see how the Reidemeister moves and (s-ii) act on the virtual string.
But since the analogous theorem holds in the non-singular case, and Reidemeister moves do not involve the singular crossings, they satisfy the required condition.
Moreover, the move (s-ii) acts precisely as the singularity switch $N$. Thus the result holds.\end{proof}

\subsection{Based matrices for long virtual strings}
\label{basedmatriceslongstrings}

To extend based matrices to flat long virtual knots and long virtual strings, it's useful for our purposes to reflect on the origin of the definitions in the closed case.
What Turaev was thinking of when he defined based matrices was the homological intersection matrix that's associated to a flat virtual knot $K$.
We know that we can find a minimal surface on which the knot lives; moreover, we can construct said surface using the band-pass presentation of the knot.
If $\Sigma$ is the minimal surface, we can look at the map $H_1(\Sigma)\times H_1(\Sigma)\to \Z$ determined by the orientation of $\Sigma$.
This bilinear pairing is skew-symmetric.

We also know that our flat virtual knot $K$ on $\Sigma$ is a deformation retract of $\Sigma$ itself.
Looking at $K$ as a CW-complex $\Gamma$, the inclusion $H_1(\Gamma)\to H_1(\Sigma)$ is an isomorphism, and since $\chi(\Gamma)=-m$ we have that $H_1(\Sigma)$ is a free abelian group of rank $m+1$.
We determine a preferred basis for $H_1(\Sigma)$ in the following way: we let $s=[K]\in H_1(\Sigma)$, and for every classical crossing of $K$ we do an oriented smoothing (which gives us a 2-component link), take the right-hand component and consider its homology class. 
We can then compute what the matrix looks like in this basis, getting the nicer, purely combinatorial formulas of \cite{5} in terms of the arrows on the virtual string.

Knowing how to associate a matrix to the virtual string, it is somewhat immediate to see that the elementary extension moves are deduced from the Reidemeister moves, see Fig. \ref{elementaryextensionsasreidemeister}.

\begin{figure}[!h]
\centering
\includegraphics[scale=.15]{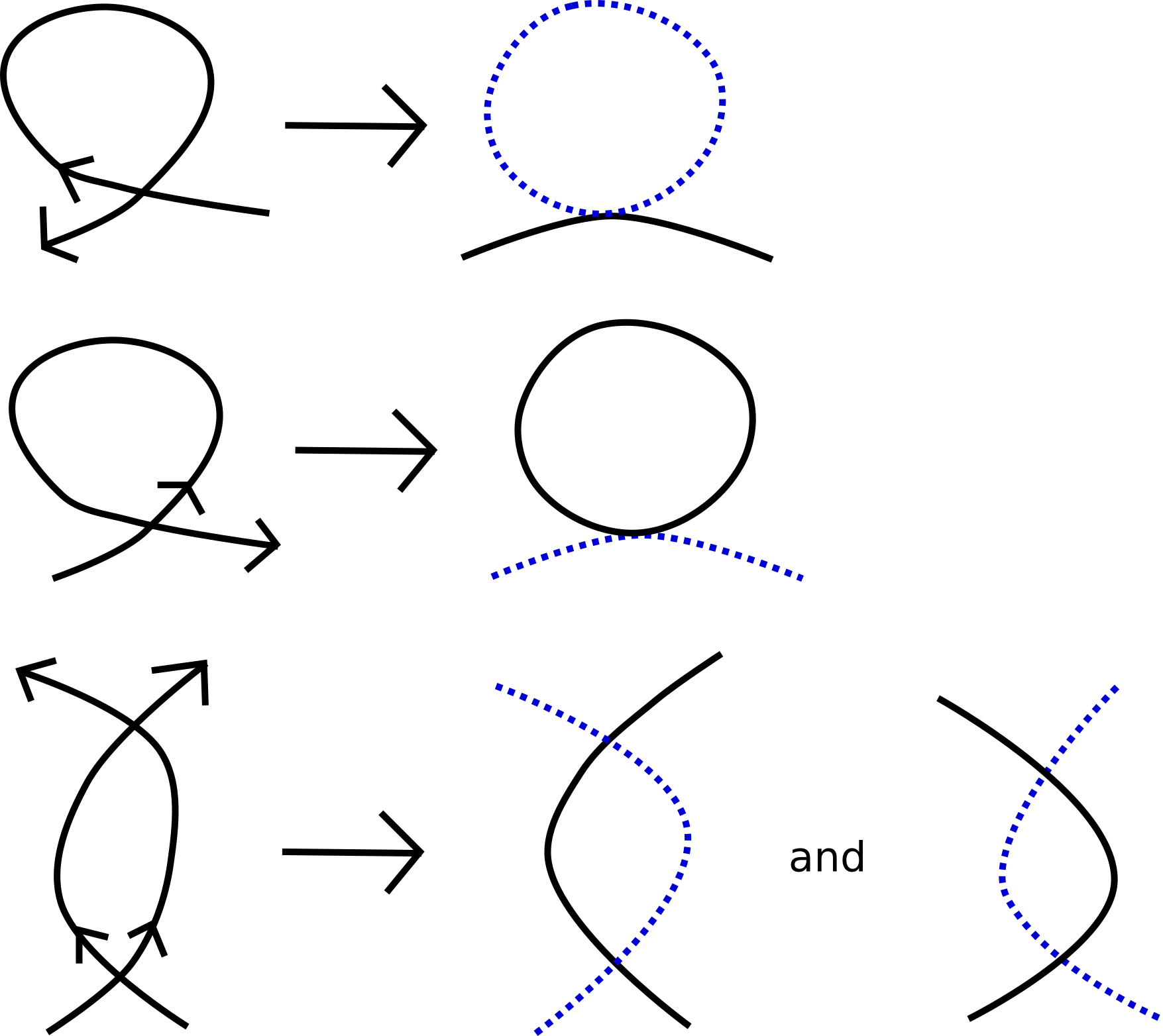}
\caption{The effect of Reidemeister moves on the basis of $H_1(\Sigma)$ and $H_1(\Sigma, C)$. The dotted cycles denote the elements that get added to the basis.}
\label{elementaryextensionsasreidemeister}
\end{figure}

Let us now consider the case of long virtual strings.
We want to create a correspondence between long virtual strings and based matrices, similarly to what we already have in the closed virtual string case.
We will mirror the above construction using the interpretation of long virtual knots as knots in thickened surfaces with boundary.
The one main difference from the closed knot case comes from the fact that the homology class of $K$ is no longer a cycle in $H_1(\Sigma)$.
To solve this issue, if $C$ is the collection of distinguished boundary components of $\Sigma$, we will consider the relative homology $H_1(\Sigma,C)$ instead of the absolute $H_1(\Sigma)$.

Using this trick the construction above falls smoothly into place: we get that $H_1(\Sigma, C)$ is isomorphic to $H_1(\Gamma, C)$, so that $H_1(\Sigma, C)$ is a free abelian group of rank $m+1$, where $m$ is the number of classical crossings of $K$.
We can still define the preferred basis in the same way.
It is worth pointing out however that there is another nice basis for the space, namely the one where, after the smoothing, we always pick the closed component.
While this choice has no impact on the theory of based matrix (we go from one basis to another by replacing $[e]$ for $s-[e]$, if $[e]$ is the homology class of an open component), it may be useful for other applications to pick the basis where all the cycles but one are closed curves.

Let us now discuss the validity of the purely combinatorial formulas for the entries of the based matrix.
Recall that a virtual string is the flat version of a Gauss diagram.
In the following let $ab$ denote the oriented arc of the core circle of the virtual string going from $a$ to $b$.
We denote by $\omega(ab)$ the arc of our long knot going from the preimage of $a$ to the preimage of $b$ (if $ab$ does not contain the distinguished point at infinity) or the union of the arcs going from infinity to $b$ and from $a$ to infinity (if $ab$ contains the distinguished point). 

Given an arrow in our long virtual string $e=(a,b)$, $a$ denotes the tail and $b$ denotes the head of the arrow.
Then $e$ gives the basis element $[e]=\omega(ab)\in H_1(\Sigma, C)$.
Its complementary element (the one we get if we take the left component after the smoothing) will be denoted $[e]^*=\omega(ba)$.
Note that $[e]^*=s-[e]$.
By definition of the homological intersection matrix, $b([e],[e]^*)=i(e)$, where $i(e)$ is the intersection index (defined in section \ref{polyinv}), computed on the tangle $[e]\cup [e]^*$ in which we set $[e]$ to be the first component, see Fig. \ref{intindexbasedmatrix}.

\begin{figure}[!h]
\centering
\includegraphics[scale=.12]{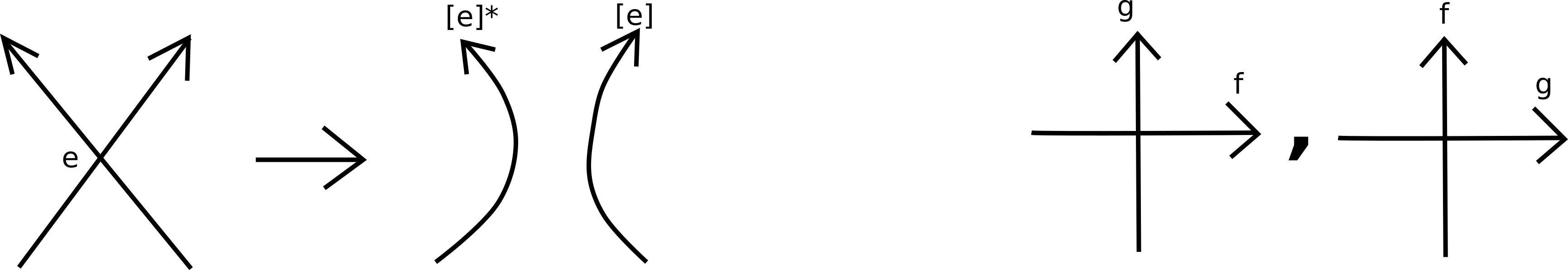}
\caption{On the left: How the homology classes $[e], [e]^*$ relate to the flat crossing $e$. On the right: two terms contributing respectively $+1$ and $-1$ to the computation of $b(f,g)$, for $f,g\in H_1(\Sigma, C)$. Putting both pictures together and recalling Definition \ref{def1} we see that $b([e], [e]^*)=i(e)$.}
\label{intindexbasedmatrix}
\end{figure}

We can compute this purely in terms of the virtual string by counting the number of arrows with tail in $ab$ and head in $ba$ minus the number of arrows with head in $ab$ and tail in $ba$, as (by definition of a virtual string) an arrow with tail in component $1$ and head in component $2$ looks like the positive crossing of Fig. \ref{flatlinksign}.
But then 
\begin{equation}b([e],[e]^*)=b([e], s-[e])=b([e],s)-b([e],[e])=b([e],s).\end{equation}
This establishes that $b([e],s)=i(e)$, as in the closed case.

For distinct points $a,b$ on the core circle of the virtual string, denote by $(ab)^\circ$ the arc $ab\setminus\{a,b\}$.
Denote by $ab\cdot cd\in\Z$ the number of arrows with tail in $(ab)^\circ$ and head in $(cd)^\circ$ minus the number of arrows with tail in $(cd)^\circ$ and head in $(ab)^\circ$.

\begin{lemma} For two arrows $e=(a,b)$, $f=(c,d)$ we have $$b([e], [f])=ab\cdot cd+\varepsilon,$$where $\varepsilon=0$ if the arrows are unlinked, $\varepsilon=1$ if $f$ links $e$ positively and $\varepsilon=-1$ if $f$ links $e$ negatively, see Fig. \ref{linkingarrows}.
\end{lemma}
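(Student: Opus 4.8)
The plan is to compute $b([e],[f])$ directly as a homological intersection number in the surface $\Sigma$, following the local analysis Turaev used in the closed case (\cite{5}), and to check that passing from absolute to relative homology --- and the possibility that one or both of the arcs $ab$, $cd$ contains the distinguished point at infinity --- changes neither term of the formula. First I would fix smooth representatives of $[e]=\omega(ab)$ and $[f]=\omega(cd)$ that run parallel to the long knot $K$ along the arcs $ab$ and $cd$ of its parametrization, routing the portion of a representative that passes through $\infty$ along the collar of the distinguished boundary $C$. Then I would perturb $\omega(cd)$ slightly off $\omega(ab)$ into transverse position and count the signed intersection points lying in the interior of $\Sigma$.

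These intersection points fall into two families. The first consists of points near a double point $g$ of $K$ at which one of the two branches through $g$ lies on the arc $ab$ and the other on the arc $cd$. Since every flat crossing is modeled on the fixed crossing of Fig. \ref{flatlinksign}, such a point contributes $+1$ when the tail of $g$ lies in $(ab)^\circ$ and its head in $(cd)^\circ$, and $-1$ in the opposite case, while a double point with both branches on the same arc contributes nothing; summing over all double points of $K$ gives exactly $ab\cdot cd$. This is where the long setting enters: no intersection point arises near the point at infinity, because there both representatives lie in the boundary collar and cannot meet in the interior, and in particular the integer $ab\cdot cd$ is unaffected by whether or not $\infty$ lies on $ab$ or $cd$. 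The second family consists of the points the perturbation creates near the double points $e$ and $f$ themselves, where the representatives have their two ends --- at the relevant double point, or, when the corresponding arc contains $\infty$, at the two ends of $K$ on $C$. A short case analysis shows these local contributions combine into a single $\varepsilon\in\{0,\pm1\}$, nonzero precisely when $e$ and $f$ are interleaved on the core circle (exactly one of $c,d$ lies in $(ab)^\circ$), with sign dictated by the orientations of the arrows exactly as in Fig. \ref{linkingarrows}: $\varepsilon=+1$ if $f$ links $e$ positively, $\varepsilon=-1$ if negatively, $\varepsilon=0$ if they are unlinked. Adding the two families gives $b([e],[f])=ab\cdot cd+\varepsilon$; as a check, interchanging $e$ and $f$ reverses the signs of both $ab\cdot cd$ and $\varepsilon$, matching skew-symmetry of $b$.

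An equivalent organization is a reduction to the closed case: the quantity $ab\cdot cd+\varepsilon$ involves only the arrows of the virtual string and never the distinguished point, so it is exactly Turaev's combinatorial expression for the closure of the long string; what must be justified is only that the homological intersection matrix of $K$ in $\Sigma$, written in the preferred basis of $H_1(\Sigma,C)$, is computed by that same local recipe near every crossing --- which it is, since the recipe is entirely local around crossings and the point at infinity meets no crossing. Either way, this lemma, together with the identity $b([e],s)=i(e)$ established above, completes the verification that the purely combinatorial formulas for the based matrix carry over verbatim to long virtual strings.

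The step I expect to be the main obstacle is the uniform sign bookkeeping: one must match the sign of a generic intersection at a crossing $g$ with the orientations of its two branches, match the sign of the endpoint/linking contribution with Fig. \ref{linkingarrows}, and confirm that both computations produce the same answer in the four cases determined by whether $\infty$ lies on $ab$, on $cd$, on both, or on neither. A secondary and essentially cosmetic point is to fix the convention that makes the relative pairing $H_1(\Sigma,C)\times H_1(\Sigma,C)\to\Z$ well defined --- concretely, a fixed rule for pushing the endpoints that land on $C$ off into the collar --- so that the intersection count above is independent of the chosen representatives.
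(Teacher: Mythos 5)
Your proposal is correct and follows essentially the same route as the paper: the paper simply defers to Turaev's argument, noting that it ``mainly rests on computing the intersection number through a local pushoff,'' and your write-up is precisely that local pushoff computation (crossing contributions giving $ab\cdot cd$, endpoint contributions giving $\varepsilon$), spelled out together with the observation that the point at infinity meets no crossing and so the passage to $H_1(\Sigma,C)$ changes nothing. In short, you have supplied in detail the verification the paper leaves to the reader by citation.
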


The proof of this lemma is unchanged from \cite{5}, as the argument mainly rests on computing the intersection number through a local pushoff.

\subsection{Results for based matrices.}
The previous section illustrated how to associate a based matrix to a long virtual string, and that two homotopic long virtual strings yield homologous based matrices.
However, all the results of \cite{2}, \cite{5} and section \ref{framedmatricessection} only rely on the structure of the based matrices themselves, and not on their relation to virtual strings.
All the results regarding based matrices and their structure will thus hold in our case as well. 
We summarize all those results into the following definitions and theorem.

\begin{definition}[Based matrices and their homology]
A based matrix is a triple $(G,s,b\colon G\times G\to H)$ where $G$ is finite, $s\in G$ and $b$ is skew-symmetric.
Some elements get assigned a specific name, as follows:

\begin{itemize}
\item We call $g\in G\setminus \{s\}$ annihilating if $b(g,h)=0$ for all $h\in G$;
\item We call $g\in G\setminus \{s\}$ a core element if $b(g,h)=b(s,h)$ for all $h\in G$;
\item We call $g_1,g_2\in G\setminus \{s\}$ complementary if $b(g_1,h)+b(g_2,h)=b(s,h)$ for all $h\in G$.
\end{itemize}

We call elementary extensions of a based matrix the following three operations:

\begin{itemize}
\item $M_1$ changes $(G,s,b)$ into $(G\coprod \{g\}, s, b_1)$ such that $b_1$ extends $b$ and $g$ is annihilating;
\item $M_2$ changes $(G,s,b)$ into $(G\coprod \{g\}, s, b_2)$ such that $b_2$ extends $b$ and $g$ is a core element;
\item $M_3$ changes $(G,s,b)$ into $(G\coprod \{g_1,g_2\}, s, b_3)$ such that $b_3$ is any skew-symmetric map extending $b$ in which $g_1, g_2$ are complementary.
\end{itemize}

Two based matrices  $(G,s,b)$ and $(G', s',b')$ are isomorphic if there is a bijection $G\to G'$ such that $s\mapsto s'$ and $b$ becomes $b'$.
A based matrix is primitive if it cannot be obtained from another based matrix by elementary extensions.
Two based matrices are homologous if they can be obtained from each other by applying a finite number of elementary extensions and their inverses.
\end{definition}

\begin{definition}[Singular based matrices and their homology]\label{sbms}
A singular based matrix (SBM) is a quadruple $(G,s,d,b\colon G\times G\to H)$ where $G$ is finite, $s,d\in G$ and $b$ is skew-symmetric.
Some elements get assigned a specific name, as follows:

\begin{itemize}
\item We call $g\in G\setminus \{s,d\}$ annihilating if $b(g,h)=0$ for all $h\in G$;
\item We call $g\in G\setminus \{s,d\}$ a core element if $b(g,h)=b(s,h)$ for all $h\in G$;
\item We call $g_1,g_2\in G\setminus \{s,d\}$ complementary if $b(g_1,h)+b(g_2,h)=b(s,h)$ for all $h\in G$.
\item We call $g\in\{s,d\}$ annihilating-like if $b(g,h)=0$ for all $h\in G$.
\item We say $d$ is core-like if $b(s,h)=b(d,h)$ for all $h\in G$.
\end{itemize}

Homology of singular based matrices is obtained by considering the elementary extensions of the non-singular case and the following extra extension:

\begin{itemize}
\item Suppose that there is a $g\in G$ such that $b(g,h)+b(d,h)=b(s,h)$ for all $h\in G$.
Then the singularity switch $N$ changes $(G,s,d,b)$ into $(G,s,g,b)$.
\end{itemize}
Two SBMs  $(G,s,d,b)$ and $(G', s',d', b')$ are isomorphic if there is a bijection $G\to G'$ such that $s\mapsto s'$, $d\mapsto d'$ and $b$ becomes $b'$.
A SBM is primitive if it cannot be obtained from another SBM by elementary extensions, even after applying the singularity switch operation.
Two SBMs are homologous if they can be obtained from each other by applying a finite number of elementary extensions, their inverses and singularity switches.
\end{definition}

\begin{definition}[Based matrices for framed long virtual strings]
When working with framed long virtual strings, either singular or non-singular, we only need to replace the elementary extensions $M_1, M_2$ and their inverses by the following three moves and their inverses:

\begin{itemize}
\item $\hat{M}_1$ changes $(G,s,b)$ into $(G\coprod \{g_1,g_2\}, s, b_1)$ such that $b_1$ extends $b$ and $g_1, g_2$ are annihilating;
\item $\hat{M}_2$ changes $(G,s,b)$ into $(G\coprod \{g_1,g_2\}, s, b_2)$ such that $b_2$ extends $b$ and $g_1, g_2$ are core elements;
\item $\hat{M}_w$ changes $(G,s,b)$ into $(G,s,b')$ by acting on an annihilating element $g$ and changing it into a core element.
\end{itemize}
An explanation of the three moves is shown in Fig. \ref{framedelementaryextensions}.
The definitions of primitivity and isomorphism are natural generalizations of those in Definition \ref{BMiso}.
\end{definition}

\begin{figure}[!h]
\centering
\includegraphics[scale=.145]{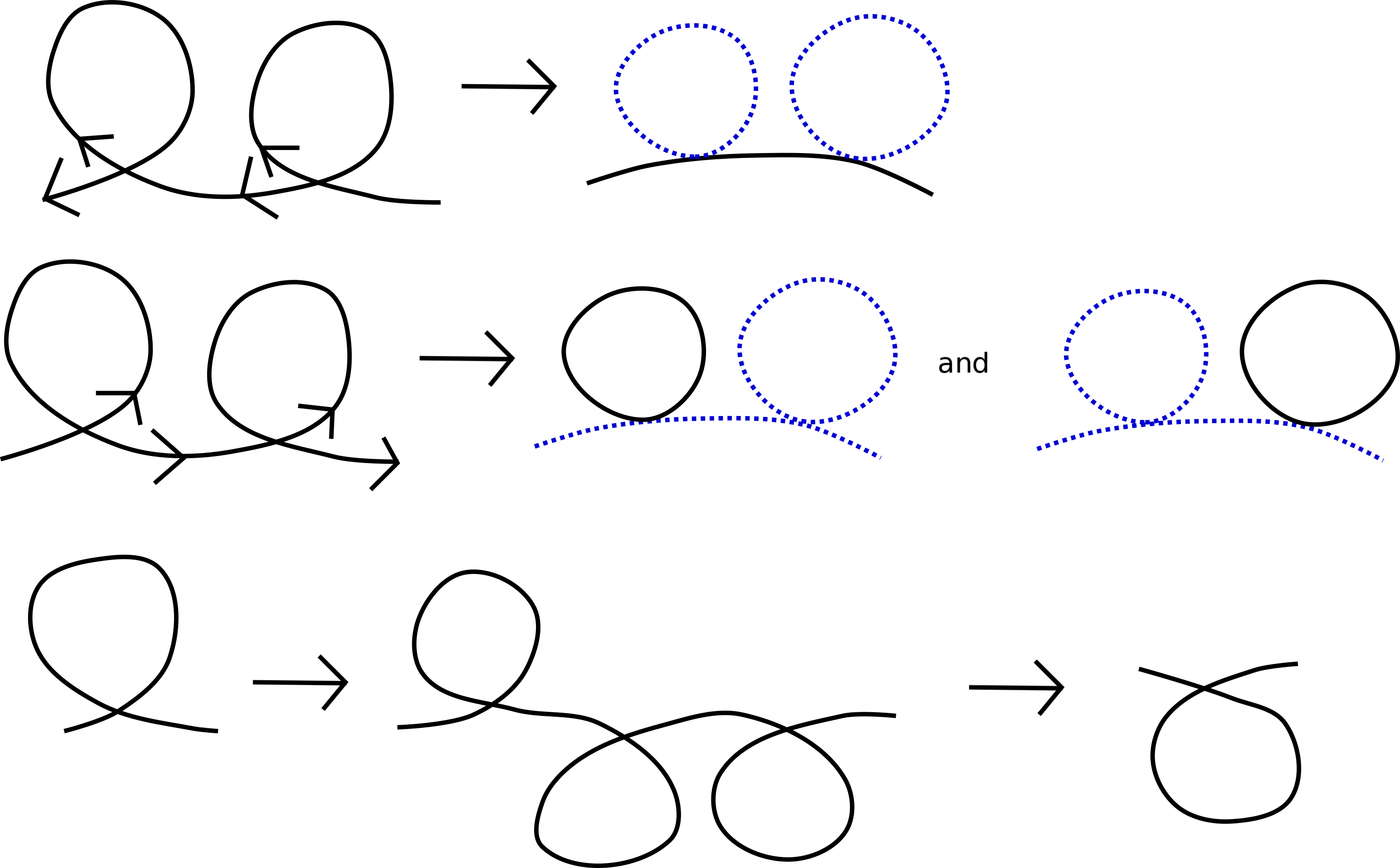}
\caption{The origin of the framed elementary extensions. The dotted cycles represent the elements added to the basis, and the third picture shows how $\hat{M}_w$ changes an annihilating element into a core one, or viceversa, by using $M_3^{-1}$ and $\hat{M}_1$ or $\hat{M_2}$.}
\label{framedelementaryextensions}
\end{figure}

\begin{theorem}[Based matrices all-in-one theorem] Any based matrix (framed/unframed, singular/non-singular) is obtained from a primitive based matrix by a finite number of elementary extensions.
Two homologous primitive based matrices of long virtual strings are isomorphic.
Two homologous primitive based matrices of long framed virtual strings are isomorphic or related by one $\hat{M}_w$ move.
Two homologous primitive based matrices of singular long virtual strings are isomorphic or related by one $N$ move.
Two homologous primitive based matrices of singular long framed virtual strings are isomorphic or related by one $\hat{M}_w$ and/or one $N$ move.

In any of the above situations and taking the appropriate elementary extensions, if two virtual strings are homotopic their based matrices are homologous.
\end{theorem}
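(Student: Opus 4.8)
The plan is to split the statement into two parts: the purely structural assertions about based matrices — existence of a primitive reduction, and uniqueness of primitive representatives up to the prescribed extra moves — and the topological assertion that homotopic long virtual strings yield homologous based matrices. The first part needs essentially no new argument in the long category.

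First I would point out that primitivity, isomorphism, homology, and the entire move calculus ($M_i$, $\hat M_i$, $N$, $D_{ij}$) are phrased solely in terms of a finite set $G$, one or two distinguished elements, and a skew-symmetric pairing $b$; the core circle and the point at infinity never enter Definitions \ref{def2} and \ref{sbms} or the elementary-extension formalism. Hence the reduction algorithm — apply inverse elementary extensions while possible, and when stuck try $\hat M_w^{\pm 1}$ and/or $N$ and continue — is monotone decreasing in $|G|$ and terminates exactly as in Lemma \ref{lemma1} and the corresponding framed-singular lemma, which gives the first sentence of the theorem. For the four uniqueness statements I would simply invoke the results already established: the unframed non-singular case is Turaev's lemma (\cite{5}), the unframed singular case is \cite{2}, the framed non-singular case is Lemma \ref{lemma1}, and the framed singular case is the theorem of section \ref{framedmatricessection} obtained through Claims \ref{claim1} and \ref{claim2}. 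Every one of those proofs consists of combinatorial manipulations of $G$ and $b$ and never uses that the underlying string is closed, so each applies verbatim to long virtual strings; in particular the independence of the $\hat M_w$ indeterminacy from the $N$ indeterminacy, which is what allows up to four primitive matrices in a homology class of singular framed based matrices, is a statement purely about compositions of abstract elementary extensions.

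Second, I would prove the homotopy-invariance statement by checking move by move that the based matrix of section \ref{basedmatriceslongstrings} changes under each Reidemeister move by a homologous move. The combinatorial formulas $b([e],s)=i(e)$ and $b([e],[f]) = ab\cdot cd + \varepsilon$ were already verified in the long setting using the relative homology $H_1(\Sigma,C)$, so the bookkeeping is identical to Turaev's, subject only to the extra constraint that no arrowhead or tail crosses the point at infinity. The cases are: virtual and mixed Reidemeister moves leave the arrow data, hence $b$, literally unchanged; a classical or flat Reidemeister two adds or removes a complementary pair, i.e. $M_3 = \hat M_3$; a flat Reidemeister three leaves $b$ unchanged up to isomorphism, the intersection indices and linking signs being preserved by the same local pushoff computation as in \cite{5}; an unframed flat Reidemeister one adds an annihilating or a core element ($M_1$ or $M_2$) according to the rotation of the kink, as in Fig. \ref{elementaryextensionsasreidemeister}; and a framed flat Reidemeister one realizes the composite moves $\hat M_1, \hat M_2$, together with the Whitney belt trick giving $\hat M_w$, as in Fig. \ref{framedelementaryextensions}. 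In the singular versions one notes additionally that none of the Reidemeister moves touches the distinguished crossing, so $d$ is carried along unchanged, whereas the move (s-ii) acts precisely as the singularity switch $N$.

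I do not anticipate a deep obstacle, since the whole point of section \ref{basedmatriceslongstrings} was to show that the long-case construction reproduces the same combinatorial data as the closed case. The one place that genuinely requires care is the topological input: in the long setting $s = [K]$ lives in $H_1(\Sigma, C)$ rather than $H_1(\Sigma)$, so one must confirm that the preferred basis still has rank $m+1$ and that $[e]^* = s - [e]$ still holds — both already recorded in section \ref{basedmatriceslongstrings} — and then recheck that the derivation of $b([e],s) = i(e)$, which uses $b([e],s-[e]) = b([e],s) - b([e],[e]) = b([e],s)$, goes through unchanged. Everything else is the routine combinatorics already carried out for the closed and framed cases.
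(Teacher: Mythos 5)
Your proposal is correct and follows essentially the same route as the paper, which disposes of the theorem by observing that the entire move calculus concerns only the abstract data $(G,s,d,b)$ — so the closed-case arguments (Turaev's lemma, Henrich's theorem, Lemma \ref{lemma1} and the framed singular theorem) apply verbatim — and that homotopy invariance follows from the same Reidemeister-move bookkeeping once the combinatorial formulas are established in $H_1(\Sigma,C)$ as in section \ref{basedmatriceslongstrings}. Your write-up simply makes explicit the case-by-case checks that the paper summarizes with ``all the proofs are unchanged from the closed virtual string case.''
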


All the proofs are unchanged from the closed virtual string case.

\subsection{Universality of the glueing invariant}
\label{universality}
Now that we defined the machinery we needed, we can go back to Theorem \ref{thm1} and present the pairs of long knots for which $S(K_1)=S(K_2)$ but $G(K_1)\neq G(K_2)$. 
\label{section5}
As we already mentioned, the glueing invariant is stronger than the smoothing invariant, since the glueing invariant is the universal Vassiliev invariant of order one.
To show it's strictly stronger we need to exhibit two virtual knots $K_1,K_2$ such that $S(K_1)=S(K_2)$ but $G(K_1)\neq G(K_2)$.

Let's start with the framed, closed case.
These are modified versions of the ones used in \cite{2}.
The reason why that pair doesn't work anymore is that it relied on Reidemeister move one to cancel out terms and show that $S(K_1)=S(K_2)=0$.
With the framed version of Reidemeister one that cancellation doesn't happen anymore, as every term has an extra kink in one of the components.
We can solve this issue by adding a kink to both knots in a specific spot, as shown in Fig. \ref{K1andK2}.

\begin{figure}[!h]
\centering
\includegraphics[scale=0.18]{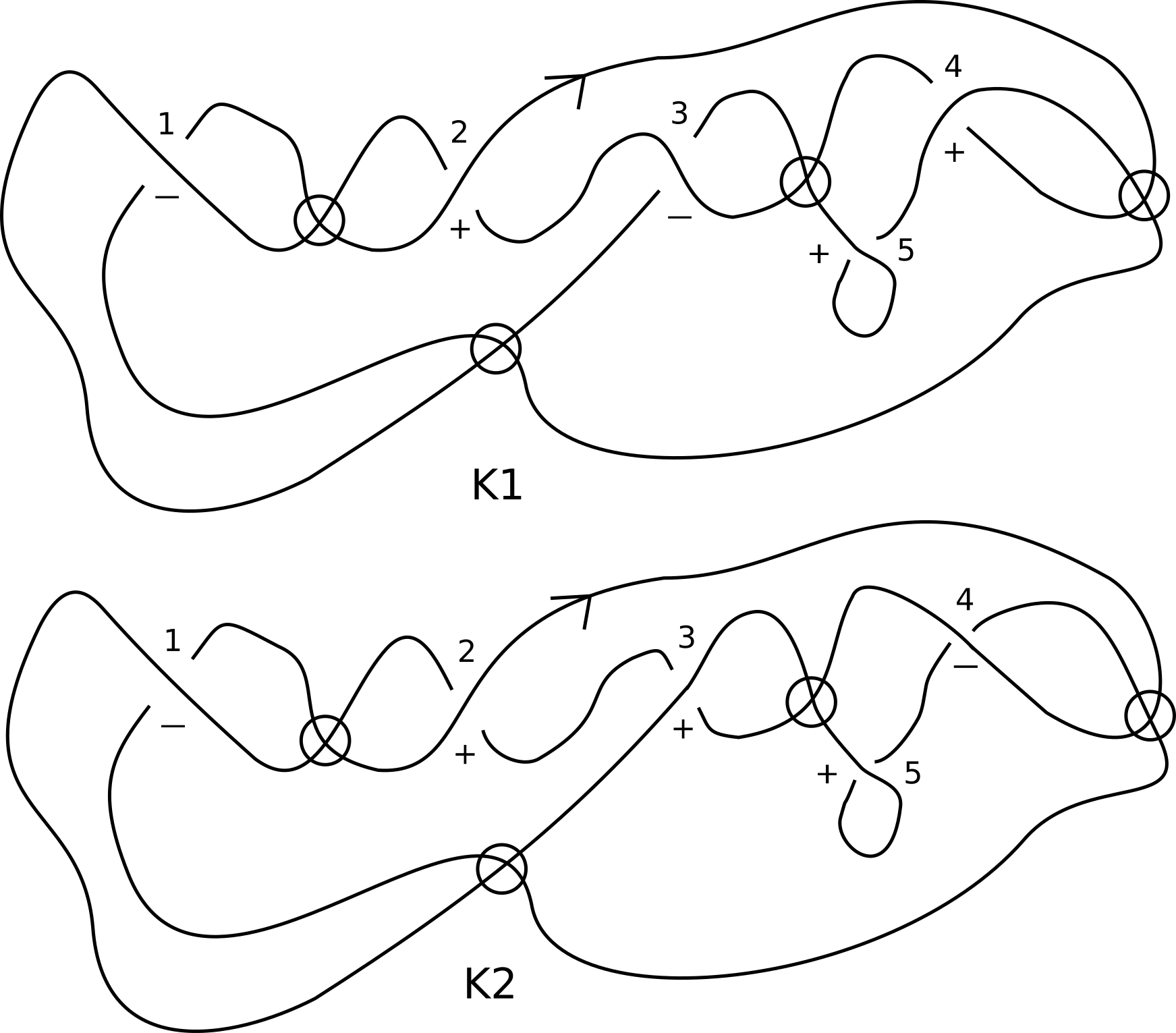}
\caption{The knots K1 and K2 in the closed, framed case.}
\label{K1andK2}
\end{figure}

\begin{claim}
$S(K_1)=S(K_2)$.
\end{claim}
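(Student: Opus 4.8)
The plan is to show that the smoothing invariant $S$ cannot distinguish $K_1$ and $K_2$ by a direct comparison of the two defining sums, crossing by crossing. Since $K_1$ and $K_2$ are constructed from the pair in \cite{2} by inserting the same extra kink in the same place, they have the same classical crossings except for the two crossings that get switched when passing from $K_1$ to $K_2$; call them $c_+$ and $c_-$ (with the sign indicating their sign in $K_1$, so in $K_2$ they are $c_-$ and $c_+$). For every crossing $d$ other than $c_+, c_-$, smoothing $K_1$ at $d$ and smoothing $K_2$ at $d$ produce the same flat virtual link class $[K^d_{sm}]$, because away from the switched crossings the two diagrams are identical and the switch only changes over/under information at $c_\pm$, which is invisible once we pass to the flat (homotopy) class. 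Hence all those terms cancel in $S(K_1)-S(K_2)$, and it remains to analyze the contribution of $c_+$ and $c_-$.

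First I would compute $S(K_1)-S(K_2)$ explicitly. In $K_1$ the two crossings contribute $sgn(c_+)[K^{c_+}_{sm}] + sgn(c_-)[K^{c_-}_{sm}] = [K^{c_+}_{sm}] - [K^{c_-}_{sm}]$, where the smoothings are taken in $K_1$; in $K_2$ the signs are reversed, so they contribute $-[K^{c_+}_{sm}] + [K^{c_-}_{sm}]$ (the flat class of an oriented smoothing does not depend on the over/under data at the other switched crossing, so the smoothed diagrams are literally the same flat links as for $K_1$). Therefore $S(K_1) - S(K_2) = 2\bigl([K^{c_+}_{sm}] - [K^{c_-}_{sm}]\bigr)$. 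The claim then reduces to showing that the two flat virtual links obtained by smoothing $K_1$ at $c_+$ and at $c_-$ are homotopic, i.e. $[K^{c_+}_{sm}] = [K^{c_-}_{sm}]$ as elements of the free abelian group on homotopy classes of two-component framed virtual links.

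The key step, and the one I expect to be the main obstacle, is exhibiting an explicit flat (framed) homotopy between these two two-component links. In \cite{2} the analogous equality held because the two smoothings were related by Reidemeister moves — essentially, the configuration was set up so that smoothing at either of the two crossings gave the same curve. Here the extra kink was inserted precisely so that this still works in the \emph{framed} theory: the kink provides the framing correction (the ``extra twist'') that in the unframed setting was supplied by an ordinary Reidemeister one move. So the hard part is to check that, after smoothing, the kink on one component can be slid and absorbed using framed flat Reidemeister moves (II, III, the virtual moves, and the framed R1 of Fig. \ref{flatframedR1}) so as to carry $K^{c_+}_{sm}$ to $K^{c_-}_{sm}$; this is a finite diagrammatic verification on the picture in Fig. \ref{K1andK2}, best done by tracking the two components and their self-framings through the moves. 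Once that homotopy is displayed, $[K^{c_+}_{sm}] = [K^{c_-}_{sm}]$, hence $S(K_1) - S(K_2) = 0$ and $S(K_1) = S(K_2)$, as claimed.
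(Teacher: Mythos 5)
Your proposal follows essentially the same route as the paper: reduce $S(K_1)-S(K_2)$ to the two switched crossings (all other terms agree because the flat classes forget the over/under data), then show that the smoothings of $K_1$ at those two crossings are flat equivalent, so their oppositely-signed contributions cancel. The only difference is that you leave the final step as a ``finite diagrammatic verification,'' whereas the paper carries it out by naming the moves (undo a virtual crossing and remove two flat crossings by Reidemeister two for one crossing, and additionally slide the kink around using flatness for the other), so there is no genuine gap.
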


\begin{proof}
$K_1$ and $K_2$ only differ at the crossings $3$ and $4$, so if we show that those two crossings contribute a same amount to $S(K_1), S(K_2)$ the claim will be proven.
Moreover, the two knots have the same writhe, so we only need to check that 
\begin{equation}-[(K_1)^3_{sm}]+[(K_1)^4_{sm}]=[(K_2)^3_{sm}]-[(K_2)^4_{sm}]\end{equation}
(according to our notation in section \ref{section1}).
But it's easy to see that $[(K_i)^3_{sm}]=[(K_i)^4_{sm}]$, as for crossing number $3$ we can first undo a virtual crossing and then use Reidemeister move two to remove two flat crossings, while for crossing $4$ we can (using flatness) move a kink around, then undo a virtual crossing and finally use Reidemeister move two to remove two flat crossings.
This completes the proof.
\end{proof}

Let us now construct the SBM associated with $K_1$ where we glued crossing $3$.
Again, it is similar to the one in \cite{2}, except there is a small isolated arrow at the bottom of the picture, see Fig. \ref{figure4}.

\begin{figure}[h]
\centering
\includegraphics[scale=.25]{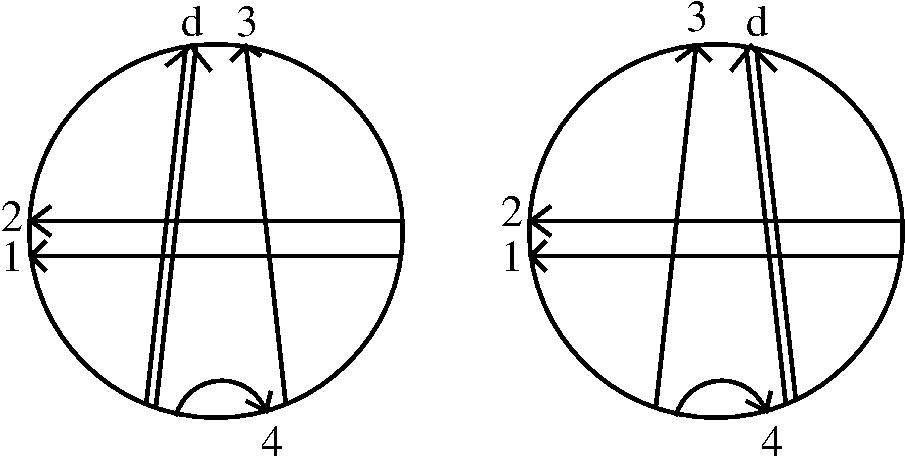}
\caption{The singular virtual strings of $K1$ where we respectively glued crossings $3$ and $4$ of Fig. \ref{K1andK2}.}
\label{figure4}
\end{figure}

Since the construction has already been mentioned in the paper, we leave to the reader the exercise of checking that the correct SBM for $K1$ where crossing $3$ was glued is the following:

\begin{equation}
\begin{pmatrix}
0&2&2&-2&0&-2\\
-2&0&0&-2&0&-3\\
-2&0&0&-1&0&-2\\
2&2&1&0&0&0\\
0&0&0&0&0&0\\
2&3&2&0&0&0
\end{pmatrix}
\end{equation}

Note that this is just the matrix of \cite{2} with an extra annihilating element added to it, which is what we expected (since we simply added a kink to the original knot).
Similarly, the matrix of $K_1$ where crossing $4$ got glued is the following

\begin{equation}
\begin{pmatrix}
0&2&2&-2&0&-2\\
-2&0&0&-3&0&-2\\
-2&0&0&-2&0&-1\\
2&3&2&0&0&0\\
0&0&0&0&0&0\\
2&2&1&0&0&0
\end{pmatrix}
\end{equation}

Both matrices are primitive: neither $\hat{M}_1$ nor $\hat{M}_2$ can be applied, and no two rows add up to $s$, so $\hat{M}_3$ and $N$ cannot be applied either.
We could at best apply $\hat{M}_w$ to crossing $4$, but we would still be unable to apply $\hat{M}_2$ because it cannot involve the $s$ row.
$d$ is clearly neither annihilating-like nor core-like in either matrix, and neither isomorphisms nor the $N$ move can relate the two SBMs.

\begin{remark}
Note that even if the two matrices only differ by the exchanged roles of the third and last row, we cannot use the $N$ move to exchange them because they don't add up to the values of the $s$ row.
\end{remark}

Since the SBMs are not homologous, the flat knots are not homotopic.
So the terms in $G(K_1)$ corresponding to crossings $3$ and $4$ do not cancel like they did in $S(K_1)$.
The corresponding terms in $G(K_2)$ have opposite signs, so in $G(K_1)-G(K_2)$ those terms add up, $[(K_i)^3_{glue}]$ with coefficient $-2$ and $[(K_i)^4_{glue}]$ with coefficient $+2$. 
Since these terms do not cancel, $G(K_1)-G(K_2)\neq 0$, hence $G(K_1)\neq G(K_2)$.
This ends the proof for the closed, framed case.

Before looking at the long virtual knot case, we should mention the following useful fact.

\begin{prop}
The (singular) based matrix associated to a long virtual knot only depends on the closure of the knot.
Two long virtual knots with the same closure yield the same (singular) based matrix.
\end{prop}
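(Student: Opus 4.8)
The statement is the long-knot analogue of Theorem~\ref{ptclosureinvariant} for the polynomial invariant, and the proof should mirror it closely. Recall that a long virtual knot is, at the Gauss-diagram level, a Gauss diagram together with a distinguished point on $S^1$ marking infinity; its closure is obtained by simply forgetting that point. So two long virtual knots with the same closure differ only by the position of the distinguished point along the core circle. The plan is to show that the combinatorial recipe producing the (singular) based matrix never refers to the distinguished point, hence is insensitive to moving it, and therefore descends to the closure.

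First I would unwind the construction of the based matrix from section~\ref{basedmatriceslongstrings}. The generating set is $G=\{s\}\sqcup arr(\alpha)$, which depends only on the set of arrows of the underlying Gauss diagram, not on the marked point. Then I would check each entry of $b$ in turn. The entry $b([e],s)=i(e)$ is the intersection index of the crossing $e$, and by Theorem~\ref{ptclosureinvariant} (more precisely, by the Gauss-diagram description of $i(d)$ given in its proof via Fig.~\ref{smoothinggauss}) this quantity is computed purely by counting arrows running between the two components of the smoothed diagram — the point at infinity only affects \emph{which} component is the long one and which is closed, not the count itself. For the off-diagonal entries $b([e],[f])=ab\cdot cd+\varepsilon$: the integer $ab\cdot cd$ counts arrows with endpoints in the arcs $(ab)^\circ$ and $(cd)^\circ$, and the linking correction $\varepsilon\in\{0,\pm1\}$ depends only on the cyclic order of the four endpoints of $e$ and $f$ on the core circle (Fig.~\ref{linkingarrows}); none of this data involves the distinguished point. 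In the singular case, the distinguished element $d$ is identified with the glued crossing, which again is an arrow of the diagram and is unaffected by the marked point. Hence moving the point at infinity past any arrowhead or arrowtail leaves the entire quadruple $(G,s,d,b)$ unchanged — not merely homologous, but literally equal.

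From this, both sentences follow. Any two long virtual knots with the same closure are related by sliding the distinguished point, which by the above leaves the based matrix fixed, giving the second sentence; and the first sentence is the statement that this common value equals the based matrix one would compute directly from the closed Gauss diagram, which is immediate since none of the defining formulas mention the marked point. I expect the only mildly delicate point to be making explicit that the \emph{arcs} $\omega(ab)$ used to define the homology classes $[e]$ can change (an arc may or may not contain infinity, per the definition of $\omega$ in section~\ref{basedmatriceslongstrings}), so that the basis elements themselves, viewed inside $H_1(\Sigma,C)$, are genuinely different; but the relation $[e]^*=s-[e]$ persists and, as remarked there, passing between the two natural bases is harmless. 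The cleanest way to phrase the argument is therefore at the purely combinatorial level of the virtual string, where no arc-choice ambiguity arises at all: the matrix entries are manifestly functions of the arrow data alone.
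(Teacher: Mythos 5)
Your argument is correct and follows essentially the same route as the paper: you observe that the based matrix construction is purely combinatorial in the arrow data of the virtual string, that the first-row entries $b([e],s)=i(e)$ are computed by counting arrows between the two components of the smoothing (so the point at infinity only changes which component is open, not the count), and that the remaining entries $ab\cdot cd+\varepsilon$ never reference the distinguished point. Your extra remarks on the singular case and on the arcs $\omega(ab)$ versus the combinatorial formulation are sound refinements of the same idea, so nothing further is needed.
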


\begin{proof}
This result should not surprise the attentive reader, since the based matrix construction was purely combinatorial, based off of the virtual string and never involving the distinguished point at infinity.
The first row of the matrix is unchanged, as we compute it as the intersection index of the ordered tangle where, after the smoothing, we always take the right-hand component as component one (see Fig. \ref{smoothinggauss})
Every other entry of the matrix is obtained by counting arrows in certain arcs of the virtual string, and the presence of the point at infinity does not influence the counting in any way.\end{proof}

\begin{figure}[h]
\centering
\includegraphics[scale=.15]{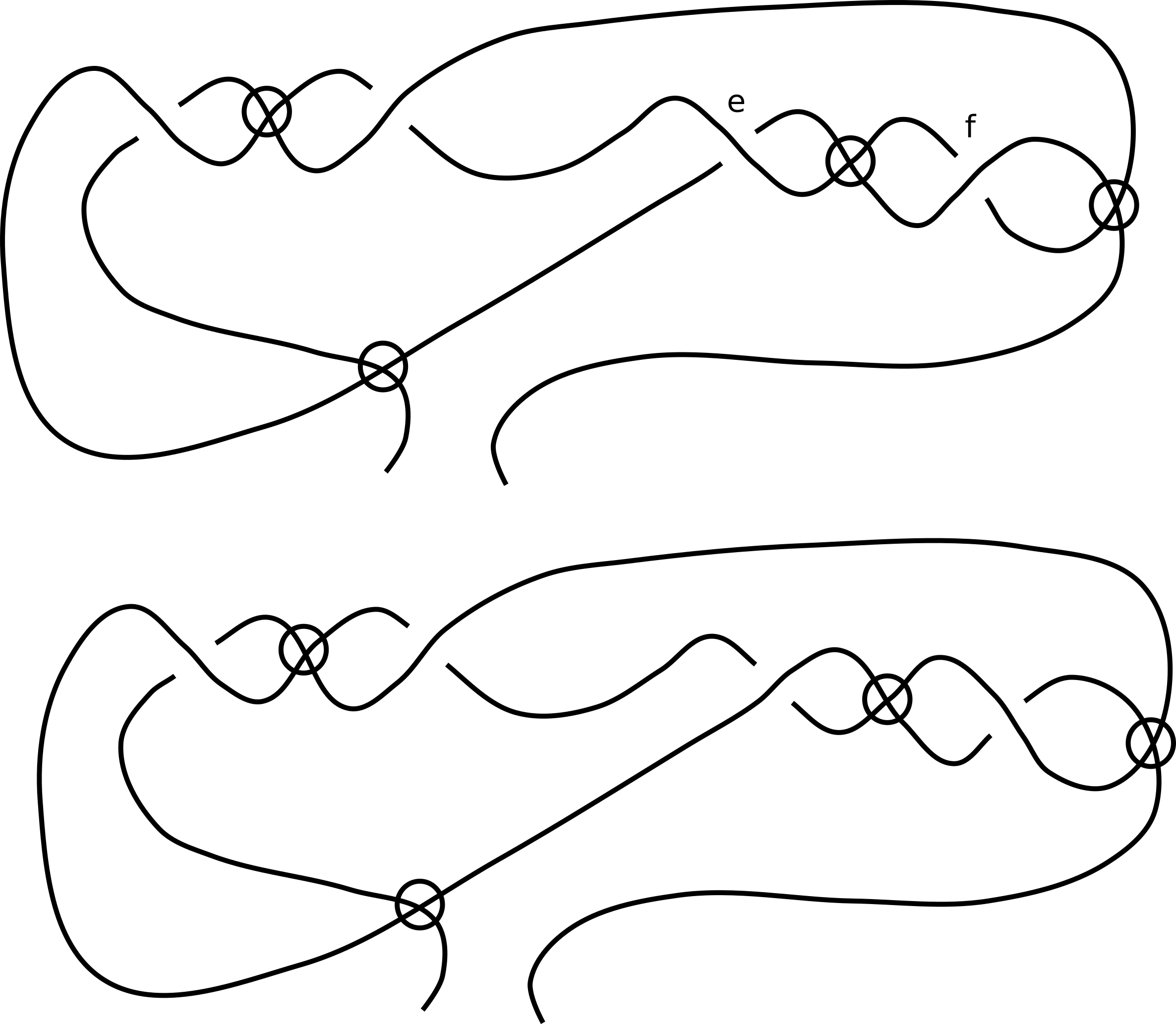}
\caption{The two knots distinguished by $G$ but not by $S$ in the long, unframed case.}
\label{unframedk1k2}
\end{figure}

Let's start with the unframed case; the two knots $K_1, K_2$ are pictured in figure \ref{unframedk1k2}.
These long knots are obtained from the ones in \cite{2}, Fig. 18 by picking a point at infinity.
These two knots only differ by the two crossings marked $e$ and $f$ in $K_1$, which get switched in $K_2$.
These crossings have opposite sign, and the two virtual tangles we get by smoothing either $e$ or $f$ are homotopic (we only need to do some untwisting using classical and virtual Reidemeister move one).
Thus the two knots have the same smoothing invariant value.

When computing the glueing invariant, the expression $G(K_1)-G(K_2)$ reduces to $2([(K_1)^f_{glue}]-[(K_1)^e_{glue}])$.
If we can show that these two flat classes are distinct (using singular based matrices) it will follow that $G(K_1)-G(K_2)\neq 0$.
But the respective based matrices are easily computed as follows:

\begin{equation}
\begin{pmatrix}
0&2&2&-2&-2\\
-2&0&0&-2&-3\\
-2&0&0&-1&-2\\
2&2&1&0&0\\
2&3&2&0&0
\end{pmatrix}
\hskip 1 in
\begin{pmatrix}
0&2&2&-2&-2\\
-2&0&0&-3&-2\\
-2&0&0&-2&-1\\
2&3&2&0&0\\
2&2&1&0&0
\end{pmatrix}
\end{equation}

These two singular based matrices are primitive, and not homologous.
It is an easy check that no inverse elementary extensions or singularity switches can be performed on either matrix.
In fact, the two matrices only differ by a swapping of the last two rows.
However, the last row represents the distinguished element $d$ of the singular based matrix, and the only operation we can perform on it is a singularity switch.
Since the last two rows don't add up to the top row, this is not an acceptable homology move, so the two matrices are non-homologous, and the two flat knots are non-homotopic.

In the framed case the argument is exactly the same, but we need to use the long knots pictured in Fig. \ref{framedk1k2}.

\begin{figure}[th]
\centering
\includegraphics[scale=.13]{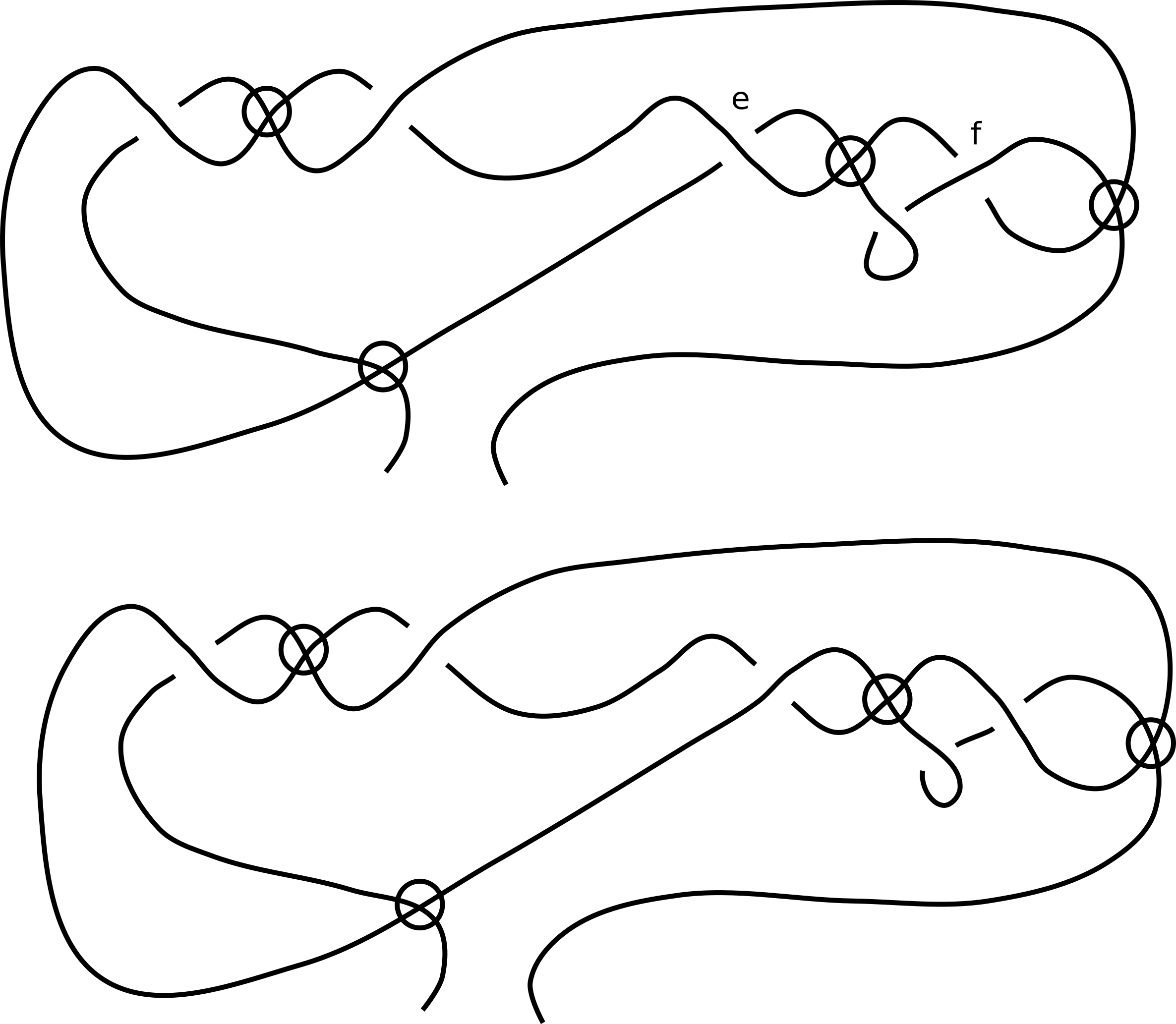}
\caption{The two framed knots distinguished by $G$ but not by $S$ in the long, framed case.}
\label{framedk1k2}
\end{figure}

Again, the two knots have the same smoothing invariant, as they only differ at the crossings $e$ and $f$ and smoothing at those crossings gives homotopic knots.
When looking at the glueing invariant $G(K_1)-G(K_2)-2([(K_1)^f_{glue}]-[(K_1)^e_{glue}])$, and we can distinguish the homotopy classes of those two knots using their singular based matrices.
A simple computations shows the based matrices to be the following:

\begin{equation*}
\begin{pmatrix}
0&2&2&-2&0&-2\\
-2&0&0&-2&0&-3\\
-2&0&0&-1&0&-2\\
2&2&1&0&0&0\\
0&0&0&0&0&0\\
2&3&2&0&0&0
\end{pmatrix}
\hskip 1 in
\begin{pmatrix}
0&2&2&-2&0&-2\\
-2&0&0&-3&0&-2\\
-2&0&0&-2&0&-1\\
2&3&2&0&0&0\\
0&0&0&0&0&0\\
2&2&1&0&0&0
\end{pmatrix}
\end{equation*}

Recall that we're in the framed case, so the moves $\hat{M}_1$ and $\hat{M}_2$ only remove pairs of annihilating or core elements.
This shows once again that these two matrices are primitive, and using the same techniques as the unframed case we can easily see that the two matrices are not homologous.
Thus $G(K_1)\neq G(K_2)$ in the framed case as well.

\section*{Acknowledgments}
I would like to thank the mathematics department at Dartmouth College, and my advisor Vladimir Chernov in particular, for the physical and moral support during the time it took to come up with these results.
I am also thankful to Dror Bar-Natan and Masahico Saito, whose comments after a presentation were the seed that let to some of the results in this paper, namely the results for long knots and the ordered polynomial invariant.

\end{document}